\newcommand{\E}{\mathbb{E}}
\newcommand{\R}{\mathbb{R}}
\newcommand{\Z}{\mathbb{Z}}
\newcommand{\Q}{\mathbb{Q}}
\newcommand{\N}{\mathbb{N}}
\newcommand{\1}{\mathds{1}}
\newcommand{\Prb}{\mathbb{P}}
\newcommand{\SP}{\mathcal{S}}
\newcommand{\Cone}{\mathcal{C}}
\newcommand{\Length}{\Theta}
\newcommand{\YField}{Y}
\newcommand{\Fc}{\mathcal{F}}
\newcommand{\Ball}{\mathsf{B}}
\newcommand{\rball}{E}
\newcommand{\Qq}{\mathsf{Q}}
\newcommand{\Leb}{\mathrm{Leb}}
\newcommand{\Paths}{\mathcal{P}}
\newcommand{\NicePaths}{\mathcal{Q}}
\newcommand{\eps}{\varepsilon}
\newcommand{\strdist}{\rho}
\newcommand{\as}{\text{\rm a.s.}}
\newcommand{\Amin}{\mathcal{A}}
\newcommand{\Bmin}{\mathcal{B}}
\newcommand{\Icube}{\mathsf{I}}
\newcommand{\Jcube}{\mathsf{J}}
\newtheorem{proposition}{Proposition}[section]
\newtheorem{lemma}{Lemma}[section]
\newtheorem{theorem}{Theorem}[section]
\theoremstyle{definition}
\newtheorem{remark}{Remark}
\newtheorem{example}{Example}
\numberwithin{equation}{section}
\title[Differentiability of limit shapes in continuous FPP]{Differentiability of limit shapes in continuous first passage percolation models}
\author{Yuri Bakhtin}
\address{Courant Institute of Mathematical Sciences, New York University, 251 Mercer St, New York, NY 10012, USA}
\email{bakhtin@cims.nyu.edu}
\author{Douglas Dow}
\address{Courant Institute of Mathematical Sciences, New York University, 251 Mercer St, New York, NY 10012, USA}
\email{dd3103@cims.nyu.edu}
\subjclass[2020]{Primary 60K37, 82B44, 60K35}
\begin{document}

\begin{abstract}
    We show that for a broad class of continuous first passage percolation models, the boundaries of the associated limit shapes are differentiable.
\end{abstract}
\begin{abstract}
    We introduce and study a class of abstract continuous action minimization problems that generalize continuous first and last passage percolation. In this class of models a limit shape exists. Our main result provides a framework under which that limit shape can be shown to be differentiable. We then describe examples of continuous first passage percolation models that fit into this framework. The first example is of a family of Riemannian first passage percolation models and the second is a discrete time model based on Poissonian points.
\end{abstract}
\maketitle

\section{Introduction}\label{sec:intro}

The main goal of this paper is to show that for a broad class of models of first passage percolation and last passage percolation type in continuous space, the associated shape functions are differentiable away from zero, implying that the boundary of the limit shape is differentiable. 

Optimal paths in disordered environments have been extensively studied in the literature. A variety of interesting models has been introduced. The general scheme is the following: each admissible path $\gamma$ in a Euclidean space $\R^d$ is assigned a random action/cost/energy $A_\omega(\gamma)$ defined through the intrinsic geometry of the path and interactions of the path with the realization of a random environment associated with a random outcome $\omega\in\Omega$. For every pair of points $x$ and $y$ in $\R^d$,
$\Amin_\omega(x,y)$ is defined as the optimal action over the space $\SP_{x,y}$ of admissible  paths connecting $x$ to $y$:
\[
    \Amin_\omega(x,y)=\inf_{\gamma\in\SP_{x,y}} A_\omega(\gamma).
\]
For many interesting models of this kind, one can use stationarity and ergodicity of the environment to apply the subadditive ergodic theorem and prove that the asymptotic growth of action $\Amin_\omega(0,x)$ is linear in the Euclidean norm $|x|$ as $|x|\to\infty$, with rate of growth depending on the direction. More precisely, the limit
\begin{equation}
\label{eq:shape-f}
    \Lambda(v)=\lim_{T\to+\infty} \frac{1}{T}A_\omega(0,Tv)
\end{equation}
is well-defined and deterministic for each $v$ in $\R^d$ 
or, for models of LPP (Last Passage Percolation) type, in a smaller convex cone $\Cone\subset\R^d$ of admissible asymptotic  directions determined by the structure of the set of admissible paths.
For example, in $1+1$-dimensional models of LPP type, where a certain directionality condition 
is imposed on paths (i.e., they cannot backtrack),
$\Cone$ may be the quadrant $\{x\in\R^2: x_1,x_2\ge 0\}$ or the half-plane 
$\{(t,x)\in\R^2: t>0\}$. In FPP (First Passage Percolation) type  models, $\Cone=\R^d$, i.e., there are no restrictions on asymptotic directions of paths.

In some models,  paths and their endpoints are restricted to certain subsets of~$\R^d$ (such as $\Z^d$ or  
$\Z\times\R^{d-1}$),
and since $Tv$ may fail to belong to this set, the claim~\eqref{eq:shape-f} needs to be modified appropriately.

The function $\Lambda:\Cone\to\R$ characterizing the rate of growth of optimal action as a function of direction $v$ is called the shape function, and in the context of homogenization for stochastic Hamilton--Jacobi--Bellman (HJB) equations it can be interpreted as the effective Lagrangian.
The term {\it shape function} comes from the fact that for models where the action $A_\omega(\gamma)$ is nonnegative and plays the role of random length of $\gamma$, the shape function (also nonnegative in this case) can be used to describe the limit shape of normalized balls with respect to the random metric  given by $\Amin_\omega(x,y)$. Namely, for many models one can prove that if 
\begin{equation}
    \label{eq:random_ball}
    \rball_\omega(T)=\{x\in \Cone : \Amin_\omega (0,x)\le T\},
\end{equation}
then, for a properly understood notion of convergence of sets, with probability~$1$, 
\begin{equation}
    \label{eq:limit-sh}
    \lim_{T\to+\infty}\frac{1}{T} \rball_\omega(T)= \rball_\Lambda, 
\end{equation}
where 
\begin{equation}
    \label{eq:limit-shape-via-shapef}
    \rball_\Lambda=\{v\in\Cone: \Lambda(v)\le 1\}.    
\end{equation}
 Thus the set $\rball_\Lambda$
plays the role of the limit shape. Its boundary $\{v\in\Cone: \Lambda(v)=1\}$ plays the role of the effective front characterizing the homogenized wave propagation in the disordered environment. The classical works on limit shapes and shape functions are
\cite{Hammersley1965}, \cite{Kingman:MR0254907}, \cite{Kingman:MR0356192}, \cite{Richardson:MR0329079}, \cite{Cox-Durrett:MR0624685}. Also, see the monograph~\cite{AHDbook:MR3729447} and references therein.

Shape functions are always $1$-homogeneous, i.e., they satisfy $\Lambda(cv)=c\Lambda(v)$ for $c>0$. Also,
due to a simple subadditivity argument, they are always convex. 
Thus, the limit shape $\rball_\Lambda$ is always a convex set. 

Convex functions and boundaries of convex sets may have corners and flat pieces, and  
the problem of characterizing further regularity properties of limit shapes and shape functions beyond simple convexity has been one of the recurrent themes in the theory of FPP and LPP models and stochastic HJB equations.

Typical fluctuations of long minimizers and their actions are tightly related to the regularity of the shape function. 
It is broadly believed that for a vast class of models with fast decay of correlations the shape function and the boundary of the limit shape must be differentiable and strictly convex.
This kind of quadratic behavior of the shape function is associated with the KPZ universality.  Ergodic properties of stochastic HJB equations also depend on the shape function regularity. We refer to \cite{Bakhtin-Khanin-non:MR3816628} for a discussion of this circle of questions.

Despite the importance of the issue, the progress on the regularity properties has been limited. 
It is known since \cite{Haggstrom-Meester:MR1379157} that if one does not require sufficiently fast decay of correlations in the environment, 
any convex set respecting the symmetries of the model can be the limit shape. Thus, any convex 1-homogeneous function with the same symmetries can be realized as shape function. A set of examples with flat edges is provided by
lattice LPP/FPP models with i.i.d.\ environments based on distributions with atoms,  see \cite{Durrett-Liggett:MR0606981}{}. 
In a recent paper \cite{bakhtin2023passage}, an LPP model in nonatomic product-type environment is shown to have both, a corner and a flat edge. We also note that in the deterministic weak KAM theory for spacetime-periodic problems, the shape functions (known as Mather's beta-functions) are strictly convex in the slope variable, differentiable at all irrational slopes, and typically have corners at all rational slopes, see~\cite{Mather:MR1139556}. 

There are several continuous space models with distributional symmetries that translate into a precise analytic form of the shape function, with regularity properties trivially implied.  
For the Hammersley process (and its generalizations), an LPP-type model based on upright paths collecting Poissonian points
from the positive quadrant, the precise form of the shape function is inherited from the fact that linear area-preserving automorphisms of the quadrant preserve the admissible paths and the distribution of the Poisson point process, see~\cite{Hammersley:MR0405665}, \cite{Aldous-Diaconis:MR1355056}, \cite{CaPi}.  
Rotationally invariant Euclidean FPP models introduced in \cite{HoNe3} obviously produce
Euclidean balls as the limit shapes. The LPP type models (including a positive temperature Gibbs polymer version) studied in \cite{BCK:MR3110798}, \cite{kickb:bakhtin2016}, 
\cite{Bakhtin-Li:MR3911894},
\cite{Bakhtin-Li:MR3856947} in the context of the stochastic Burgers equation, allow for a form of invariance under shear transformations resulting in quadratic shape functions. In addition, shape functions have been computed for a handful of exactly solvable 
models, see \cite{Rost:MR635270}, 
\cite{Baryshnikov:MR1818248},\cite{Gravner-Tracy-Widom:MR1830441}, \cite{Hambly-Martin-O'Connell:MR1935124}, \cite{Moriarty-O'Connell:MR2343849}, \cite{Seppalainen:MR2917766},\cite{JRAS:https://doi.org/10.48550/arxiv.2211.06779}. 

It is natural to conjecture that these results can be extended to a broader family of models, on discrete lattices and in continuous spaces. However, they are based on very precise restrictive properties of the models in question, and there seems to be a gap between the universality claims and the concreteness of these models.  In \cite{BakhtinDow_Differentiability} and \cite{bakhtin2023differentiability}, we gave results on differentiability 
of shape functions in the interior of $\Cone$ for a large class of LPP-type models in continuous space. In \cite{BakhtinDow_Differentiability}, we gave a simple argument for $1+1$-dimensional  time-discrete and white-in-time  models, and  in \cite{bakhtin2023differentiability} we adapted our method to multidimensional
spacetime-continuous nonwhite environments. The latter setup allows for an interpretation in terms of differentiability of the effective Lagrangian in the homogenization problem for HJB equations with random forcing. 
Although these models are not distributionally shear-invariant, our argument is based on a form of approximate distributional invariance of the model under a family of shear transformations.

\smallskip

In the present paper, we show that our approach is applicable to continuous space models of FPP type.

In fact, our main result is applicable to continuous models of both FPP and LPP types and loosely 
can be stated as follows:
\begin{theorem}\label{thm:main-informal} Under a set of mild      
    conditions on the random action $A_\omega$ that we describe in Section~\ref{sec:general-results}, there is a deterministic convex function $\Lambda$ such 
    that~\eqref{eq:shape-f} holds for each $v\in\Cone$. Moreover, convergence in~\eqref{eq:shape-f} is uniform on compact sets. The limit shape theorem
    in~\eqref{eq:limit-sh} holds.
    The shape function $\Lambda$ is differentiable at every nonzero interior point of $\Cone$. 
    For positive actions, the effective front (the boundary of the limit shape) is differentiable at all its points in the interior of $\Cone$.
\end{theorem}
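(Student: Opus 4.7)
The plan is to deduce~\eqref{eq:shape-f}, uniform convergence, and~\eqref{eq:limit-sh} from a subadditive ergodic theorem, and then to obtain differentiability by adapting the approximate shear-invariance argument developed for LPP-type models in \cite{BakhtinDow_Differentiability,bakhtin2023differentiability}.

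For existence and uniform convergence, I would first verify that the hypotheses imply stationarity of the environment, subadditivity $\Amin_\omega(x,z)\le\Amin_\omega(x,y)+\Amin_\omega(y,z)$, ergodicity, and a mild integrability bound, and then apply Kingman's subadditive ergodic theorem along a countable dense set of directions to obtain a deterministic $\Lambda:\Cone\to\R$ satisfying~\eqref{eq:shape-f}. Passing subadditivity to the limit yields convexity of $\Lambda$, and the defining relation gives $1$-homogeneity. Convexity plus $1$-homogeneity forces local Lipschitz continuity in the interior of $\Cone$, which together with pointwise convergence along a dense set upgrades~\eqref{eq:shape-f} to uniform convergence on compact subsets of the interior. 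The limit shape identity~\eqref{eq:limit-sh} then follows by sandwiching $\frac{1}{T}\rball_\omega(T)$ between the sub- and super-level sets of $\Lambda$ at levels $1\pm\eps$.

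The main step is differentiability of $\Lambda$ at a nonzero interior $v_\ast\in\Cone$. By convex analysis this reduces to showing
\[
\Lambda(v_\ast + \delta u) + \Lambda(v_\ast - \delta u) - 2\Lambda(v_\ast) = o(|\delta|) \qquad \text{as } \delta\to 0
\]
for every unit vector $u$ transverse to $v_\ast$. I would obtain this via a shear comparison: for small $\delta$, let $\Phi_\delta$ be the linear shear along $v_\ast$ that sends $v_\ast$ to $v_\ast + \delta u$ and fixes the hyperplane transverse to $v_\ast$. The key input to be extracted from the model's hypotheses is an approximate distributional invariance, namely that the law of the action of $\Phi_\delta\gamma$ agrees with that of $\gamma$ up to a deterministic quadratic-in-$\delta$ correction and a stochastic error of order $o(T)$ on the scale of paths of length $T$. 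Using the images $\Phi_{\pm\delta}\gamma$ of near-minimizers between $0$ and $Tv_\ast$ as admissible candidates for the variational problems with endpoints $Tv_\ast \pm T\delta u$ produces the second-order bound $\Lambda(v_\ast + \delta u) + \Lambda(v_\ast - \delta u) - 2\Lambda(v_\ast) = O(\delta^2)$, which is stronger than the desired estimate. The hard part, and the main obstacle, is verifying in the abstract framework of Section~\ref{sec:general-results} that the admissibility class $\SP$ is preserved (or nearly so) under $\Phi_\delta$ and that the change in the law of the environment is indeed a benign quadratic tilt; in the FPP setting there is no distinguished time axis as in LPP, so one must work in a rotated frame adapted to $v_\ast$ and control the resulting distortion of paths.

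For the effective front, when $A_\omega > 0$ the $1$-homogeneity of $\Lambda$ forces $\Lambda > 0$ on $\Cone\setminus\{0\}$, so the front is the level set $\{\Lambda = 1\}$. At any interior $v_0$ with $\Lambda(v_0) = 1$ at which $\Lambda$ is differentiable, Euler's identity $\langle\nabla\Lambda(v_0),v_0\rangle = \Lambda(v_0) = 1$ guarantees $\nabla\Lambda(v_0)\ne 0$, and the implicit function theorem (equivalently, the convex-geometric fact that a convex body whose gauge is differentiable admits a unique supporting hyperplane at the corresponding boundary point) yields differentiability of the front at $v_0$.
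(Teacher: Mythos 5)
Your differentiability step is, at its core, the paper's own argument: you transport the (near-)minimizer for the direction $v_\ast$ by a shear $\Xi_{v\to w}$ coupled with a measure-preserving map on $\Omega$, Taylor-expand the transformed action with a uniform $O(T)$ bound on its second derivative in $w$, and finish with convexity and $1$-homogeneity. The only real difference is the convex-analysis endgame: you bound the symmetric second difference $\Lambda(v_\ast+\delta u)+\Lambda(v_\ast-\delta u)-2\Lambda(v_\ast)=O(\delta^2)$, whereas the paper keeps the one-sided bound $f_n(w)-f_n(v)\le\langle\xi_n,w-v\rangle+h(w)$ and invokes Lemma~\ref{lem:deterministicDifferentiability}, which in addition identifies $\nabla\Lambda$ as the limit of $T^{-1}\nabla_H B(v,v,\gamma^T(v))$; your variant gives differentiability but not the derivative formula. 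Two remarks: (i) the ``hard part'' you isolate --- whether $\SP$ is preserved by the shear and whether the change of environment is a benign tilt --- is exactly what conditions \ref{cond:setup_diff} and \ref{cond:M_infty} postulate, so at the level of the abstract theorem there is nothing left to extract (it is verified model by model in Sections~\ref{sec:exampleI}--\ref{sec:directed}); (ii) your shears move $v_\ast$ only inside $v_\ast+H$, so the second differences give differentiability relative to that hyperplane, and the radial direction must still be supplied by $1$-homogeneity, exactly as in Lemma~\ref{lem:diffLemma}. Your effective-front step coincides with Theorem~\ref{thm:limit-shape-diff}; note only that positivity of the action gives $\Lambda\ge 0$, not $\Lambda>0$ away from zero, but at front points $\Lambda=1$, so Euler's identity still yields a nonzero gradient and the implicit function theorem applies.

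The genuine gap is in the uniform convergence and limit shape part. You claim that local Lipschitz continuity of the convex, $1$-homogeneous limit together with pointwise a.s.\ convergence along a dense set of directions upgrades \eqref{eq:shape-f} to uniform convergence on compacts. That implication fails: the prelimit functions $w\mapsto T^{-1}\Amin_\omega(0,Tw)$ are not convex (subadditivity compares actions only through shifted environments, as in \eqref{eq:subadditiveProperty}) and no equicontinuity is available a priori, so regularity of the limit alone cannot control the random prelimit between dense directions --- the same reason lattice shape theorems require growth/moment hypotheses beyond what Kingman's theorem uses. In the paper this is precisely the role of condition~\ref{cond:for-finiteness}: the positive-probability linear-growth bounds on properly contained cones are upgraded by ergodicity to a sequence of scales $n_k$ with $n_{k+1}/n_k\to1$, along which one concatenates a path to $n_kv^-$ with a uniformly controlled path to any nearby endpoint, giving the local sup/inf bounds of Lemma~\ref{lem:goodApproxLemma} and hence Theorem~\ref{thm:uniformConvergenceShape}; the same condition is what guarantees $\Lambda>-\infty$. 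Without an argument of this kind your sandwiching proof of \eqref{eq:limit-sh} has no support, so this step of the proposal needs to be reworked along those lines.
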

\begin{remark}
    We also prove a formula for $\nabla\Lambda$.
\end{remark}

\begin{remark}
    It is known since~\cite{Szn98} (see also \cite{LaGatta_Wehr2010}) that shape functions for FPP type models have a corner at the origin. For example, for rotationally invariant FPP models, the graph of the shape function is a cone with
    spherical section. So for these models, we can claim differentiability of the shape function only at nonzero points. In our previous results on LPP type models, zero was automatically excluded since it did not belong to the interior of the LPP cone~$\Cone$. In fact,  
    we only considered directions of the form $(1,v)\in\R^{1+d}$ (and, by $1$-homogeneity, their multiples).
\end{remark}  

Our results from \cite{bakhtin2023differentiability} on LPP-type models
fit the framework of the present paper (the time-discrete models of~\cite{BakhtinDow_Differentiability} need more adjustments). The conditions we require for our main results here were checked for these models in that paper. See the discussion in Section~\ref{sec:directed}.

Moreover, in the present paper, we
check that the conditions of our main results are satisfied for two classes of anisotropic FPP-type models.
One of them is a random Riemannian metric model in the spirit of~\cite{LaGatta_Wehr2010}, see 
Section~\ref{sec:exampleI} and another
is a random metric based on broken line paths between Poissonian points inspired 
by~\cite{HoNe3}, see Section~\ref{sec:exampleII}.  Our method should apply to a variety of similar models, but we chose these two classes where the application is relatively straightforward.

\bigskip

The paper is organized as follows. In Section~\ref{sec:general-results} we describe the general setup, state the main general conditions and main results, rigorous counterparts of the informal Theorem~\ref{thm:main-informal}. We also give a proof of our  central differentiability result in its general form in that section. 
Proofs of all the other results stated in this section are postponed until
Section~\ref{sec:uniformConvergence}. 
In Sections~\ref{sec:exampleI} and~\ref{sec:exampleII}, we describe two classes of FPP-type models that our results apply to. In Section~\ref{sec:directed}, we explain that the results from~\cite{bakhtin2023differentiability} fit the framework of the present paper. The remaining sections contain proofs of various results from first four sections. Section~\ref{sec:uniformConvergence} contains proofs of the results from Section~\ref{sec:general-results}.
Sections~\ref{sec:proof-of-main-riemannian}--\ref{sec:appendix} contain proofs of the results from Section~\ref{sec:exampleI}. Section~\ref{sec:proofs_broken-lines} contains the proof of the result from Section~\ref{sec:exampleII}

{\bf Acknowledgments.} YB and DD are grateful to the National Science Foundation for 
partial support via Awards DMS-1811444 and DMS-2243505. We thank Peter Morfe for pointing to  \cite{tu2024regularity}, where a method similar to ours is used in a related but different problem.

\section{General conditions and results}\label{sec:general-results}

In this section, we give a general framework and state our main results. Our focus is on the fully continuous case, though some discrete in time problems can be embedded into our framework. In Section \ref{sec:generalSetUp}, we introduce a general set of assumptions and state the (standard) results on convergence to the shape function and limit shape. The central part of this paper is Section~\ref{sec:differentiability}
where we state our main results on differentiability.

\subsection{General setup and standard limit shape results.}\label{sec:generalSetUp}

First we will define the spaces we will work with. For $x,y\in \R^d$ and $t>0$, let 
\[
\SP_{x,y,t} = \{\gamma\in W^{1,1}([0,t];\R^d)\,:\,\gamma_0=x,\,\gamma_t=y\}.
\]
Then we can define
\[\SP_{x,y,*}=\bigcup_{t>0}\SP_{x,y,t},\quad x,y\in\R^d,\]
\[\SP=\SP_{*,*,*} = \bigcup_{x,y\in\R^d,\, t>0}\SP_{x,y,t},\quad x,y\in\R^d,\ t>0,\]
and other similar spaces such as $\SP_{*,*,t}$.
For $\gamma\in\SP_{*,*,t}$, we define $t(\gamma)=t$.

The space $\SP$ is a separable metric space when equipped with the Sobolev metric given by 
\begin{equation}
    d(\gamma,\psi) = \int_0^1 |\gamma_{t_1s} - \psi_{t_2s}|ds + \int_0^1 |t_1\dot{\gamma}_{t_1s} - t_2\dot{\psi}_{t_2s}|ds + |t_1-t_2|
\end{equation}
for $\gamma\in \SP_{*,*,t_1}$ and $\psi\in\SP_{*,*,t_2}.$
The spaces $\SP_{x,y,t}$ and $\SP_{x,y,*}$ are endowed with the induced topology, which coincides with the $W^{1,1}$ topology on these spaces.

For any $x,y,z\in\R^d$ and $t_1,t_2>0$, if  $\gamma\in\SP_{x,y,t_1}$ and $\psi\in \SP_{y,z,t_2}$, then $\gamma\psi\in \SP_{x,z,t_1+t_2}$ denotes their concatenation, defined by
\[
(\gamma\psi)_s=
\begin{cases}
\gamma_{s},& s\in[0,t_1],\\
\psi_{s-t_1},& s\in[t_1,t_2].
\end{cases}
\]
For $x\in\R^d$, the spatial shift $\theta^x:\R^d\to\R^d$ is defined by
\begin{equation}
    \label{eq:theta}
    \theta^xy=y+x,\quad y\in\R^d. 
\end{equation}
This definition lifts to transformations of $\SP$, namely, for $x\in\R^d$, $t>0$, and $\gamma\in \SP_{*,*,t}$, the spatial shift  $\theta^x\gamma\in \SP_{*,*,t}$ is defined by
\[ (\theta^x \gamma)_s=\gamma_{s}+x,\quad s\in[0,t].\]

We consider a complete probability space $(\Omega,\mathcal{F},\Prb)$ and assume that the group~$\R^d$ acts on $\Omega$ 
ergodically. Namely, we assume that we are given a family $(\theta_*^x)_{x\in\R^d}$ of measurable transformations
 $\theta_*^x:\Omega\to \Omega$ preserving $\Prb$, ergodic and 
having the group property: for all $x,y\in \R^d$, $\theta_*^{x+y} = \theta_*^x\theta_*^y$ and $\theta_*^0$ is the identity map.

We consider a jointly measurable action/energy/cost function 
\begin{align*}
A:\Omega\times\SP &\to \R\cup\{\infty\},\\
 (\omega,\gamma)&\mapsto A_\omega(\gamma),
\end{align*}
which is a random field indexed by absolutely continuous paths. Once $\gamma\in\SP$ is fixed,  $A(\gamma)=A_\cdot(\gamma):\Omega\to \R\cup\{\infty\}$ is a random variable. Once $\omega\in\Omega$ is fixed, $A_\omega:\SP\to\R\cup \{\infty\}$ assigns actions to all absolutely continuous paths. One of the goals of allowing for infinite action values is to accommodate the LPP settings where only certain ``directed'' paths are admissible. Nonadmissible paths will be assigned infinite action and thus will be excluded from variational problems.
 
The following two assumptions on $A$ are fundamental. The first relates the action of $A$ on the concatenation of paths to the sum of the actions of each individual path. In the common set-up where $A$ is given by a local energy summed or integrated along a path the relation is an identity rather than an inequality. The second assumption, called skew-invariance, implies statistical stationarity and ergodicity conditions on $A$.

\begin{enumerate}[label=(A\arabic*), ref = \rm{(A\arabic*)}]
    \item\label{cond:subbadd}(subadditivity) For all $\omega\in\Omega$, $x,y,z\in \R^d$, $\gamma\in \SP_{x,y,*}$, and $\psi \in \SP_{y,z,*}$, 
    \[A_\omega(\gamma\psi)\le A_\omega(\gamma) + A_\omega(\psi).\]
    \item\label{cond:skew-invariance}(skew-invariance) For all $\omega\in \Omega,$ $x\in \R^d$,  all $\gamma\in \SP$, 
    \begin{equation*}
        A_\omega(\gamma) = A_{\theta_*^{x}\omega}(\theta^x\gamma).
    \end{equation*}
\end{enumerate}

We want to study the minimization problem 
\begin{equation}\label{eq:AStarDef}
    \Amin(x,y)=\Amin_\omega(x,y) := \inf\{A(\gamma)\,:\,\gamma\in \SP_{x,y,*}\}.
\end{equation}
We will need the following assumption: 
\begin{enumerate}[resume*]

    \item \label{cond:measurability}
The minimal action $\Amin$ is jointly measurable as a function from $\Omega\times \R^d\times \R^d$ to $\R\cup\{+\infty\}$. 
\end{enumerate}

In order to accommodate LPP-type problems, we need to introduce a cone $\Cone\subset \R^d$ of admissible directions. For the Hammersley process the role of $\Cone$ is played by the positive quadrant. For HJB equations with dynamic random forcing considered in \cite{bakhtin2023differentiability}, the role of $\Cone$
is played by the half-space $(0,\infty)\times\R^{d-1}$. In FPP-type problems with no constraints on directions of paths, $\Cone=\R^d$. We require the following properties of the cone $\Cone$ and the action  $A$:

\begin{enumerate}[resume*]
    % [({A}1)]
    % \setcounter{enumi}{\value{tempcounter}}
    \item \label{cond:cone} $\Cone\subset \R^d$ is a convex and nonempty cone. 
    For all $\omega\in\Omega$ and for every $x\in \Cone$, 
    there is a random path $\gamma(x)=\gamma_\omega(x)\in \SP_{0,x,*}$ achieving the infimum in~\eqref{eq:AStarDef}, i.e., $\Amin_\omega(0,x) = A(\gamma_\omega(x))$ for all $\omega\in \Omega$.
    Additionally, for all $x\in \Cone$, $\sup_{r\in [0,1]}|\Amin(0,rx)|$ is measurable and
\begin{equation}
    \label{eq:expected_action_finite}
    \E\Big[\sup_{r\in [0,1]}|\Amin(0,rx)|\Big] < \infty.     
\end{equation}
\end{enumerate}

 Conditions \ref{cond:skew-invariance} and  \ref{cond:cone}  imply that $\E[|\Amin(x,y)|] < \infty$ for all $x,y\in \R^d$ satisfying $y-x \in \Cone.$ For simplicity of presentation we assume that conditions \ref{cond:subbadd}--\ref{cond:cone} hold for all $\omega\in \Omega$, but with minor adjustments one may allow for a single exceptional set of zero measure on which the conditions of \ref{cond:subbadd}--\ref{cond:cone} fail.

For $v\in \mathcal{C}$ define 
\begin{equation}
    \Amin^T(v) = \Amin(0,Tv).
\end{equation}

\begin{theorem}\label{thm:shapeFunction} Under assumptions \ref{cond:subbadd} --- \ref{cond:cone}, 
    there is a convex, deterministic function $\Lambda:\mathcal{C}\to [-\infty,\infty)$ such that for all $v\in \mathcal{C}$, with probability one,
    \begin{equation}
        \label{eq:conv_to_Lambda}
        \Lambda(v) = \lim_{T\to \infty}\frac{1}{T}\Amin^T(v).
    \end{equation}
    Additionally, $\Lambda(sv) = s\Lambda(v)$ for all $v\in \R^d$ and $s > 0$,
    and if $0\in\Cone$, then $\Lambda(0)=0$.
\end{theorem}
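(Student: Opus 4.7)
The plan is to apply Kingman's subadditive ergodic theorem to the process $X_{m,n}(\omega) := \Amin_\omega(mv, nv)$ for each fixed $v \in \Cone$, then bootstrap from almost sure convergence along integers to convergence along real $T$, and read off the structural properties of $\Lambda$ by passing to the limit in the corresponding properties of $\Amin$.

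First I would verify the three hypotheses of Kingman's theorem. Subadditivity $X_{0,n} \le X_{0,m} + X_{m,n}$ is immediate from concatenating minimizers (which exist by \ref{cond:cone}) and applying \ref{cond:subbadd}. Stationarity under the shift $\theta_*^v$ follows from \ref{cond:skew-invariance} via the identity $X_{m, m+k}(\omega) = \Amin_{\theta_*^{mv}\omega}(0, kv) = X_{0,k}(\theta_*^{mv}\omega)$. Integrability $\E X_{0,1}^+ < \infty$ is the $r=1$ case of \eqref{eq:expected_action_finite}. Kingman then yields a $\theta_*^v$-invariant random variable $\Lambda(v, \omega) \in [-\infty, \infty)$ with $\Amin(0, nv)/n \to \Lambda(v, \omega)$ a.s.\ and in $L^1$. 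To upgrade $\theta_*^v$-invariance to $\R^d$-invariance (and hence, by the ergodicity assumption on the $\R^d$-action, to a deterministic value $\Lambda(v)$), I would use the two-sided subadditive comparison
\[
    |\Amin_\omega(0, nv) - \Amin_\omega(y, y + nv)| \le \Amin_\omega(0, y) + \Amin_{\theta_*^{nv}\omega}(y, 0) + \text{symmetric terms},
\]
together with a standard Borel--Cantelli argument: stationarity of the sequence $(\Amin_{\theta_*^{nv}\omega}(y,0))_n$ combined with the integrability from \eqref{eq:expected_action_finite} forces the right-hand side divided by $n$ to vanish a.s., so that $\Lambda(v,\cdot) = \Lambda(v, \theta_*^y \cdot)$ a.s.\ for enough $y$ to invoke ergodicity.

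To extend from integer $n$ to real $T = n+s$ with $s \in [0,1)$, I would bound $|\Amin(0, Tv) - \Amin(0, nv)|$ by $M_n := \sup_{r\in[0,1]} |\Amin_{\theta_*^{nv}\omega}(0, rv)|$ via \ref{cond:subbadd}, and observe that $(M_n)$ is a stationary sequence with $\E M_0 < \infty$ by \eqref{eq:expected_action_finite}, so a routine Borel--Cantelli gives $M_n/n \to 0$ a.s. One-homogeneity $\Lambda(sv) = s\Lambda(v)$ is a reparametrization of the limit. Subadditivity of $\Lambda$, and hence (together with 1-homogeneity) convexity on $\Cone$, would come from dividing
\[
    \Amin_\omega(0, T(v_1 + v_2)) \le \Amin_\omega(0, Tv_1) + \Amin_{\theta_*^{Tv_1}\omega}(0, Tv_2)
\]
by $T$, taking expectations, and passing to the limit using the $L^1$ convergence from Kingman combined with stationarity of the second term. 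Finally $\Lambda(0) = 0$ when $0 \in \Cone$ is immediate since $\Amin(0,0)$ is a.s.\ finite by \ref{cond:cone}. I expect the main subtlety to be the determinism step when $\Cone$ is a proper cone (LPP-type settings), since then the detour from $0$ to $y$ and back may fail to be admissible for $y \notin \Cone$; one must restrict the shift comparison to $y$ in $\Cone$ and use that such $y$ already generate $\R^d$ as a group when $\Cone$ has nonempty interior, with \eqref{eq:expected_action_finite} providing the uniform control on bounded detours needed to close the Borel--Cantelli step.
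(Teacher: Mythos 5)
Your skeleton is the same as the paper's: the shift identity $\Amin_\omega(x,y)=\Amin_{\theta_*^z\omega}(x+z,y+z)$ from \ref{cond:skew-invariance}, subadditivity from \ref{cond:subbadd}, Kingman's theorem for the limit, convexity by splitting $Tz$ into $T\alpha x$ and $T(1-\alpha)y$, homogeneity by reparametrization, and $\Lambda(0)=0$ from \eqref{eq:expected_action_finite}. The two genuine departures are that the paper invokes the continuous-parameter form of Kingman's theorem directly (this is what the supremum over $r\in[0,1]$ in \eqref{eq:expected_action_finite} is tailored for), while you use the discrete theorem plus a hand-rolled interpolation, and that for convexity the paper passes to the limit a.s.\ (left side) and in probability (right side) in the pathwise inequality rather than taking expectations; your expectation route is workable but needs the identification $\lim_T\E\Amin(0,Tv)/T=\Lambda(v)$ (determinism plus Fekete) and some care when $\Lambda=-\infty$. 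Your determinism discussion is more explicit than the paper's one-line appeal to ergodicity and correctly flags the cone-admissibility issue (modulo a harmless sign convention: $\Amin_\omega(mv,(m+k)v)=\Amin_{\theta_*^{-mv}\omega}(0,kv)$).

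The genuine gap is in the extension from integer $n$ to real $T$. The claimed two-sided bound $|\Amin(0,Tv)-\Amin(0,nv)|\le M_n$ with $M_n=\sup_{r\in[0,1]}|\Amin_{\theta_*^{\pm nv}\omega}(0,rv)|$ does not follow from \ref{cond:subbadd}: subadditivity gives only the upper direction, $\Amin(0,Tv)\le\Amin(0,nv)+\Amin_{\theta_*^{-nv}\omega}(0,(T-n)v)\le\Amin(0,nv)+M_n$. To bound $\Amin(0,Tv)$ from below you must use $\Amin(0,(n+1)v)\le\Amin(0,Tv)+\Amin(Tv,(n+1)v)$ (the alternative comparison through $\Amin(Tv,nv)$ involves the backward increment $(n-T)v\notin\Cone$ and is useless in the directed settings the theorem is meant to cover, where such actions are $+\infty$), and you then need $\sup_{s\in[0,1]}\Amin((n+s)v,(n+1)v)=o(n)$ almost surely. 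That supremum runs over a continuum of starting points, i.e.\ it equals $\sup_{s\in[0,1]}\Amin_{\theta_*^{-(n+s)v}\omega}\bigl(0,(1-s)v\bigr)$, a supremum over a continuum of shifted environments; condition \eqref{eq:expected_action_finite} controls only suprema with the starting point fixed at the origin and provides neither measurability nor integrability for this quantity, so your Borel--Cantelli step does not close. This is precisely the difficulty that the continuous-parameter subadditive ergodic theorem cited in the paper is meant to absorb; if you insist on the discrete-plus-interpolation route, you need an additional hypothesis (or an example-by-example verification, as is easy in Sections~\ref{sec:exampleI} and~\ref{sec:exampleII}) of the form $\E\bigl[\sup_{s\in[0,1]}|\Amin(sv,v)|\bigr]<\infty$, i.e.\ uniform control over varying starting points on a unit segment.
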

Kingman's Subadditive Ergodic Theorem was proved with problems like this in mind. We remind the standard argument 
in Section~\ref{sec:uniformConvergence} for completeness.

\smallskip

In all of our examples, $\Lambda$ is, in fact, finite. 
Let us supplement our setup with an additional assumption guaranteeing finiteness of $\Lambda$ and uniform convergence in~\eqref{eq:conv_to_Lambda}. We need the latter to prove convergence to a limit shape. 

 We say that a cone $\Cone' \subset \Cone$ is \textit{properly contained} in $\Cone$ if $\overline{\Cone'}\setminus\{0\}\subset \Cone^{\circ}$.

 \bigskip

\begin{enumerate}[resume*]
    % [({A}1)]
    % \setcounter{enumi}{\value{tempcounter}}
    \item \label{cond:for-finiteness}
For every cone $\Cone'\subset \Cone$ properly contained in $\Cone$, there is $\kappa < \infty$ such that 
\begin{equation}\label{eq:linearGrowthProb}
    \Prb\bigg\{ \sup_{x\in \Cone',\,|x|>1}\frac{|\Amin(0,x)|}{|x|} < \kappa\bigg\} > 0,
\end{equation}
and
\begin{equation}\label{eq:linearGrowthProb2}
    \Prb\bigg\{ \sup_{x\in \Cone',\,|x|>1}\frac{|\Amin(-x,0)|}{|x|} < \kappa\bigg\} > 0.
\end{equation}
\end{enumerate}
In most applications, \eqref{eq:linearGrowthProb} and \eqref{eq:linearGrowthProb2} are equivalent due to distributional symmetries of the action. Although in some settings \eqref{eq:linearGrowthProb} and \eqref{eq:linearGrowthProb2} hold for $\Cone' = \Cone$, there are LPP settings where the action $\Amin(0,x)$ goes to infinity as $x$ approaches the boundary of $\Cone$, and so using the notion of properly contained cones is unavoidable.

\begin{theorem}\label{thm:uniformConvergenceShape}
    Under assumptions \ref{cond:subbadd} --- \ref{cond:for-finiteness}, $\Lambda(v) > -\infty$
    for all $v\in\Cone^{\circ}$ and there is a full measure set $\Omega_0$ such that for all $\omega\in \Omega_0$ and all compact sets $K\subset \Cone^{\circ},$
    \begin{equation}\label{eq:uniformConvergence}
        \lim_{T\to \infty}\sup_{w\in K}\Big|\frac{1}{T}\Amin^T(w) - \Lambda(w)\Big|  = 0.
    \end{equation}
\end{theorem}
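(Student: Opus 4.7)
My plan has three parts: first establish finiteness of $\Lambda$ on $\Cone^\circ$, then prove a uniform a.s.\ bound on $\tfrac{1}{T}|\Amin^T|$ over compact direction sets, and finally upgrade pointwise convergence on a dense set to uniform convergence via a compactness argument.

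For finiteness, given $v\in\Cone^\circ$ I would choose a cone $\Cone'$ properly contained in $\Cone$ with $v$ in its interior. Then \ref{cond:for-finiteness} ensures the event $E_\kappa=\{\sup_{x\in\Cone',|x|>1}|\Amin(0,x)|/|x|<\kappa\}$ has positive probability, and on $E_\kappa$ one has $|\Amin^T(v)|/T\le\kappa|v|$ once $T|v|>1$. Combining this with the deterministic a.s.\ limit $\tfrac{1}{T}\Amin^T(v)\to\Lambda(v)$ from Theorem~\ref{thm:shapeFunction} yields $|\Lambda(v)|\le\kappa|v|$, so $\Lambda>-\infty$ on $\Cone^\circ$. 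Convexity and $1$-homogeneity from Theorem~\ref{thm:shapeFunction}, combined with finiteness on the open convex set $\Cone^\circ$, then produce local Lipschitz continuity of $\Lambda$, which will be essential in the final step.

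For the uniform a.s.\ bound, fix a compact $K\subset\Cone^\circ$ and choose $\Cone'$ properly contained in $\Cone$ with $K\subset(\Cone')^\circ$. Since $\Prb(E_\kappa)>0$, Fubini combined with ergodicity of the $\R^d$-action $\theta_*$ yields, for a.e.\ $\omega$, a bounded shift $y=y(\omega)\in\Cone$ such that $\theta_*^y\omega\in E_\kappa$. By subadditivity,
\[
\Amin(0,Tv)(\omega)\le\Amin(0,y)(\omega)+\Amin_{\theta_*^y\omega}(0,Tv-y),
\]
where for $T$ large the second summand is at most $\kappa|Tv-y|$ because $Tv-y\in\Cone'$ and $|Tv-y|>1$ (these use $K\subset(\Cone')^\circ$ and boundedness of $y$). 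Dividing by $T$ and taking $T\to\infty$, $\limsup\tfrac{1}{T}\Amin^T(v)(\omega)\le\kappa|v|$ uniformly in $v\in K$. The matching lower bound follows from the analogous routing argument based on the second assertion of \ref{cond:for-finiteness}.

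Finally, let $D\subset\Cone^\circ$ be countable and dense, and let $\Omega_0$ be the intersection of the full-measure sets from the previous two steps with the pointwise-convergence events $\{\tfrac{1}{T}\Amin^T(v)\to\Lambda(v)\}$ for $v\in D$. Arguing by contradiction, any failure of uniform convergence on a compact $K\subset\Cone^\circ$ for some $\omega\in\Omega_0$ produces $T_n\to\infty$, $v_n\in K$ with $v_n\to v^\star\in K$, and $|\tfrac{1}{T_n}\Amin^{T_n}(v_n)(\omega)-\Lambda(v_n)|\ge\epsilon$. Picking $v'\in D$ near $v^\star$, subadditivity and skew-invariance give
\[
\tfrac{1}{T_n}\Amin^{T_n}(v_n)(\omega)\le\tfrac{1}{T_n}\Amin^{T_n}(v')(\omega)+\tfrac{1}{T_n}\Amin_{\theta_*^{T_nv'}\omega}(0,T_n(v_n-v')),
\]
whose first summand converges to $\Lambda(v')$ by the choice of $v'\in D$. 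A uniform ``crust'' bound $C|v_n-v'|+o(1)$ on the second summand, together with continuity of $\Lambda$ and the approximation $v'\to v^\star$ along $D$, gives $\limsup_n\tfrac{1}{T_n}\Amin^{T_n}(v_n)(\omega)\le\Lambda(v^\star)$, and the symmetric routing based on \ref{cond:for-finiteness} gives the matching $\liminf\ge\Lambda(v^\star)$. The main obstacle is precisely this crust bound: the base point $T_nv'$ shifts $\omega$ in a way depending on $n$, so the previous step's bound does not apply verbatim to $\theta_*^{T_nv'}\omega$. I would overcome this by a refined application of \ref{cond:for-finiteness} via the multi-parameter ergodic theorem, ensuring that along the sequence $T_nv'$ one can always route through a nearby bounded good shift and obtain a bound linear in $|v_n-v'|$ with a constant uniform in $n$.
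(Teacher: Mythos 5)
Your overall skeleton matches the paper's: the finiteness argument via \eqref{eq:linearGrowthProb} and determinism of $\Lambda$ is exactly the paper's, and the final step (countable dense set, continuity of $\Lambda$ from convexity and finiteness, compact covering) is the same reduction the paper uses. The problem is that the one step you defer --- the ``crust bound'' on $\frac{1}{T_n}\Amin_{\theta_*^{-T_nv'}\omega}(0,T_n(v_n-v'))$ --- is not a technicality to be handled by ``a refined application of \ref{cond:for-finiteness} via the multi-parameter ergodic theorem''; it is the substantive content of the proof, carried out in the paper as Lemma~\ref{lem:goodApproxLemma}, and your sketch omits precisely the ingredients that make it work. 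Concretely: (i) the good event of \eqref{eq:linearGrowthProb} must be made to recur along a \emph{fixed ray} $nv^-$ with consecutive good scales $n_k$ satisfying $n_{k+1}/n_k\to 1$, so that the routing point $n_{k(T)}v^-$ is within $o(T)$ of the target and, crucially, the first leg $\Amin(0,n_{k(T)}v^-)/T\to\Lambda(v^-)$ because it lies on a single ray where the subadditive-ergodic limit is available; your formulation shifts the environment to the $n$-dependent point $T_nv'$ and gives no mechanism for replacing it by such a recurrent good point while keeping the first leg's limit. (ii) The linear bound $\kappa|\cdot|$ on the good event holds only inside a cone $\Cone^-$ properly contained in $\Cone$, so the auxiliary direction $v^-$ and $\Cone^-$ must be chosen so that an entire ball $\Ball(Tw,T\delta)$ sits inside $n_kv^-+\Cone^-$; routing through $T_nv'$ with $v'$ merely close to $v^\star$ does not guarantee $T_nv_n-z\in\Cone^-$ for the points $z$ you would route through. (iii) The matching $\liminf$ is not ``symmetric routing'' through the same estimate: it needs the separate hypothesis \eqref{eq:linearGrowthProb2} and routing \emph{onward} from $T_nv_n$ to a good scale $n_kv^+$ beyond it, i.e. $\Amin(0,n_kv^+)\le\Amin(0,T_nv_n)+\Amin(T_nv_n,n_kv^+)$, with the same recurrence and cone-geometry care.

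Two smaller remarks: your Part 2 (the a.s.\ uniform bound $\limsup_T\frac1T|\Amin^T(v)|\le\kappa|v|$ on compacts) is never used in your final argument and can be dropped; and in the paper's convention the subadditivity reads $\Amin_\omega(0,x+y)\le\Amin_\omega(0,x)+\Amin_{\theta_*^{-x}\omega}(0,y)$, so the shifts in your displays should be $\theta_*^{-y}$ and $\theta_*^{-T_nv'}$. As it stands, the proposal correctly identifies where the difficulty lies but leaves that step unproved, so it does not constitute a proof of the theorem.
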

We will prove this theorem in \Cref{sec:uniformConvergence}.

These results can be understood in terms of limit shapes. Limit shapes are usually 
defined in the context of FPP, where $\Cone=\R^d$, the action $\Amin(x,y)$ is positive and
 can be interpreted as a random metric between points $x$ and $y$.
The set $\rball_\omega(T)$ defined in~\eqref{eq:random_ball} can be viewed as a ball of radius $T$ in this metric.  For any set $K\subset\R^d$ and a number $a\in\R$,
we denote $aK=\{ax: x\in K \}$. 

We will say that a family of sets $N_T\subset \Cone,$ $T>0$,
converges locally to a set $N\subset \Cone$ and write
\[ N_T\stackrel{\mathrm{loc}}{\longrightarrow} N,\quad T\to\infty,\]
if for every compact set $K\subset \Cone^{\circ}$ and every $\epsilon>0$, there is $T_0>0$ such that
\begin{equation}
    \label{eq:local-convergence-of-shapes}
    ((1-\epsilon)N) \cap K \subset N_T \cap K \subset  ((1+\epsilon) N) \cap K,\quad T>T_0.
\end{equation}

The following result shows that $\rball_\Lambda$ defined in~\eqref{eq:limit-shape-via-shapef} is the deterministic limit shape associated with the
random action $A_\omega$. 
\begin{theorem} 
    \label{thm:limit-shape}
    Under assumptions \ref{cond:subbadd} --- \ref{cond:for-finiteness},
    with probability $1$, 
    \begin{equation}\label{eq:shapeConvergenceCompact}
        \frac{1}{T} \rball_\omega(T)\stackrel{\mathrm{loc}}{\longrightarrow}\rball_\Lambda, \quad T\to\infty.  
    \end{equation}
\end{theorem}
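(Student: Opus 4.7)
The plan is to reduce the desired set-theoretic convergence to the already-established uniform convergence of $T^{-1}\Amin^T$ to $\Lambda$ from Theorem~\ref{thm:uniformConvergenceShape}, together with the $1$-homogeneity of $\Lambda$.

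First I would rewrite both sides of \eqref{eq:shapeConvergenceCompact} in terms of sublevel sets of the relevant functions. By the definition of $\rball_\omega(T)$, the scaling $v = x/T$ gives
\[
\tfrac{1}{T}\rball_\omega(T) \;=\; \bigl\{v\in\Cone : \tfrac{1}{T}\Amin^T(v)\le 1\bigr\},
\]
while by the $1$-homogeneity of $\Lambda$ (from Theorem~\ref{thm:shapeFunction}), $(1\pm\eps)\rball_\Lambda = \{v\in\Cone : \Lambda(v)\le 1\pm\eps\}$. So for a fixed compact $K\subset \Cone^\circ$ and $\eps>0$, the two inclusions in \eqref{eq:local-convergence-of-shapes} translate to: for all $v\in K$ and $T$ large,
\[
\Lambda(v)\le 1-\eps \;\Longrightarrow\; \tfrac{1}{T}\Amin^T(v)\le 1,
\qquad
\tfrac{1}{T}\Amin^T(v)\le 1 \;\Longrightarrow\; \Lambda(v)\le 1+\eps.
\]

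Next, working on the full-measure set $\Omega_0$ from Theorem~\ref{thm:uniformConvergenceShape}, I would apply uniform convergence with tolerance $\delta=\eps/2$ on the compact set $K\subset\Cone^\circ$ to obtain $T_0=T_0(\omega,K,\eps)$ such that for all $T>T_0$ and all $v\in K$,
\[
\bigl|\tfrac{1}{T}\Amin^T(v) - \Lambda(v)\bigr| < \tfrac{\eps}{2}.
\]
If $\Lambda(v)\le 1-\eps$, then $\tfrac{1}{T}\Amin^T(v) < 1-\eps/2 \le 1$, yielding the first implication. Conversely, if $\tfrac{1}{T}\Amin^T(v)\le 1$, then $\Lambda(v) < 1+\eps/2 \le 1+\eps$, yielding the second. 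This gives the inclusions in \eqref{eq:local-convergence-of-shapes} and hence \eqref{eq:shapeConvergenceCompact} on $\Omega_0$.

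There is no real obstacle here once Theorem~\ref{thm:uniformConvergenceShape} is in hand: the argument is a direct translation between sublevel sets and the uniform estimate. The only minor point worth noting is that the local convergence is tested only against compact subsets of $\Cone^\circ$, so one never needs to extend the uniform estimate to the boundary of $\Cone$ (where $\Lambda$ may fail to be finite or continuous); likewise, one never needs to worry about the set $\rball_\Lambda$ being closed or bounded, since everything is intersected with $K$. The exceptional null set is precisely the one excluded in Theorem~\ref{thm:uniformConvergenceShape}, and does not depend on $\eps$ or $K$.
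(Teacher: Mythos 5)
Your proof is correct and takes essentially the same route as the paper: both arguments reduce the two inclusions in \eqref{eq:local-convergence-of-shapes} to the uniform convergence of Theorem~\ref{thm:uniformConvergenceShape} on a compact subset of $\Cone^\circ$, combined with the $1$-homogeneity of $\Lambda$, with the null set being exactly $\Omega\setminus\Omega_0$. The only (cosmetic) difference is that you take the uniform estimate directly over $K$, whereas the paper works with the supremum over $K'=((1+\epsilon)\rball_\Lambda)\cap K$; your choice is if anything slightly cleaner.
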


\begin{remark}\label{rem:limit-shape-strong}
     In fact, a stronger form of convergence often holds. Namely, if in addition to the conditions of 
    Theorem~\ref{thm:limit-shape}, we require 
    $\Cone=\R^d$ and $\Lambda(v) > 0$ for all $v\neq 0$ (this holds for  a typical FPP setting and, in particular, for our examples studied in Sections~\ref{sec:exampleI} and \ref{sec:exampleII}), we can prove that with probability 1,
    for every $\epsilon>0$, there is $T_0>0$ such that  
    \begin{equation}
        \label{eq:shapeConvergence-strong}
        (1-\epsilon)\rball_\Lambda \subset  \frac{1}{T} \rball_\omega(T)\subset  (1+\epsilon) \rball_\Lambda ,\quad T>T_0,
    \end{equation}
    i.e., a version of \eqref{eq:local-convergence-of-shapes} with $K$ replaced by $\Cone=\R^d$ holds. We prove this claim along with 
    Theorem~\ref{thm:limit-shape} in Section~\ref{sec:proof-limit-shape}.
\end{remark}
\begin{remark}
   In directed settings, where the cone boundary $\partial\Cone$ is nonempty, the stronger convergence \eqref{eq:shapeConvergence-strong} is also often true, and can be derived from a stronger version of
   Theorem~\ref{thm:uniformConvergenceShape}, where the uniform convergence holds up to $\partial\Cone$. 
   In our general setting, even continuity of $\Lambda$ up to the boundary is not guaranteed, and one needs extra regularity conditions to ensure nice behavior of $\Amin$ and $\Lambda$ near $\partial \Cone$. Paths on the boundary of $\Cone$ are more constrained than paths in $\Cone^{\circ}$ and so in practice different techniques are often used near the boundary (see for e.g. \cite{MR2094434}). We do not address these issues in detail and concentrate on differentiability in the interior of~$\Cone$.

\end{remark}

\subsection{Differentiability}\label{sec:differentiability}
In this section, we assume the setup described in Section~\ref{sec:generalSetUp} and present general conditions guaranteeing differentiability of $\Lambda$ at a point $v\in\Cone^\circ\setminus\{0\}$. Our main result is Theorem~\ref{thm:mainThm}.
We will see below (Lemma~\ref{lem:diffLemma}) that the desired differentiability of $\Lambda$ at $v$ follows from differentiability along a sufficiently large set of directions. Thus the assumptions introduced in this section will be targeted at checking this directional differentiability. These assumptions are easy to verify. In 
Sections~\ref{sec:exampleI} and \ref{sec:exampleII} we will check them for two classes of FPP-type models. The proofs of differentiability for LPP-type models in \cite{BakhtinDow_Differentiability}, 
\cite{bakhtin2023differentiability} were essentially based on
checking these assumptions, too.

We denote
$\Ball(x,r)=\{y\in \R^d:\ |x-y|<r\}$. We need the following in our setup:

\begin{enumerate}[label=(B\arabic*), ref=\rm{(B\arabic*)}]
    % [({B}1)]
    \item  \label{cond:setup_diff}There is $\delta > 0$ and
    a $d-1$ dimensional subspace 
    $H\subset \R^d$ 
 not containing~$v$ with the following properties: for 
 all $T>0$ and $w\in  H(\delta)$, where
\begin{equation}
    \label{eq:H-delta}
H(\delta)= (v + H)\cap \Cone^\circ\cap \Ball(v,\delta),
\end{equation} 
 there is a pair of maps:
a measurable bijection $\Xi_{v\to w}:\SP_{0,Tv,*}\to \SP_{0,Tw,*}$ 
and a measure preserving map $\Xi_{v\to w}^*:\Omega\to \Omega.$  In addition, $\Xi_{v\to v}^*$  and $\Xi_{v\to v}$ are identity maps.
\end{enumerate}

We drop the dependence on $T$ from $\Xi_{v\to w}$ and  $\Xi_{v\to w}^*$ for brevity.
In applications, the maps $\Xi_{v\to w}$ are usually lifted from 
a transformation of $\R^d$ that does not depend on $T$.

Before describing the further requirements on $\delta,H$ and maps $\Xi_{v\to w}$, $\Xi^*_{v\to w}$, we need to introduce 
further notation.

The function $B$ defined as 
\begin{equation}\label{eq:AB_equality}
    B_\omega(w,v,\gamma) = A_{\Xi_{v\to w}^* \omega}(\Xi_{v\to w}\gamma),\quad \gamma\in\SP_{0,Tv,*},
\end{equation}
is a transformed version of $A$.

The optimal transformed action is given by
\begin{equation}\label{eq:Bdef}
    \Bmin^T(w,v) = \inf\{B(w,v,\gamma)\,:\,\gamma\in \SP_{0,Tv,*}\}.
\end{equation}
If $\gamma^T(v)=\gamma^T_{\omega}(v)=\gamma_\omega(Tv)$ is the selection of path realizing the infimum in the definition of $A^T(Tv)$, 
see condition \ref{cond:cone}, we define 
\[\psi^T(w,v) = \psi^T_\omega(w,v) = \Xi_{v\to w}^{-1}\gamma^T_{\Xi_{v\to w}^*\omega}(w) \in \SP_{0,Tv,*}\] 
to be our selection of path obtaining the infimum in \eqref{eq:Bdef}. 

Since $\Xi_{v\to v}$ and $\Xi_{v\to v}^*$ are identity maps, we have
\begin{equation}
\label{eq:psi_is_gamma}
\psi^T(v,v)=\gamma^T(v)=\gamma(Tv),
\end{equation}
and
\begin{equation}
 \label{eq:B-minimizer}
 \Bmin^T(v,v) = B(v,v,\gamma^T(v)).
\end{equation}

By \eqref{eq:AB_equality} and the assumption that $\Xi_{v\to w}^*$ is measure preserving, for every $w\in (v + H)\cap\Cone$, 
\begin{equation}\label{eq:B_ShapeTheorem}
    \Lambda(w) = \lim_{T\to \infty}\frac{1}{T}\Bmin^T(w,v)
\end{equation}
$\Prb$-almost surely. 

For a function $f:H(\delta)\to \R$ ($H(\delta)$ is defined in~\eqref{eq:H-delta}), one can define $\nabla_H f$ and $\nabla_H^2 f$ as, respectively, its first and second derivatives relatively to~$H$.
These derivatives can be identified as elements of $H$ and $H^2$, respectively, so that
\[
f(w') = f(w) + \langle\nabla_H f(w),w'-w \rangle +  \langle \nabla^2_H f (w)(w'-w), w'-w\rangle +o(|w'-w|^2), 
\]
as $H(\delta)\ni w'\to w$, 
where the inner product in $H$ is induced by the inner product 
in~$\R^d$. Note that if $f$ is in fact defined on an open set in $\R^d$ and is twice differentiable at $v,$ then $\nabla_H f = P_H \nabla f$ and $\nabla_H^2 f = P_H\nabla^2 f$, where $\nabla f$ and $\nabla^2 f$ are the usual derivatives in $\R^d$ and $P_H$ is the orthogonal linear projection onto $H.$

Since $v$ is fixed, we consider $B(w,v,\gamma^T(v))$ as a function of its first argument $w$.

\bigskip

We are now ready to state the crucial assumption and the main differentiability theorem:

\begin{enumerate}[resume*]
    % [({B}1)]  \setcounter{enumi}{\value{tempcounter}}
    \item \label{cond:M_infty}
 $\nabla_H B(w,v,\gamma^T(v))$ and $\nabla_H^2 B(w,v,\gamma^T(v))$ exist for all $w\in H(\delta)$, and
\begin{equation}
    \label{eq:M_infty}
    M_\infty := \limsup_{T\to\infty}\frac{1}{T}\sup_{w\in H(\delta)} \|\nabla_H^2 B(w,v,\gamma^T(v))\|  < \infty
\end{equation}
almost surely.
\end{enumerate}

\begin{theorem}\label{thm:mainThm} Under assumptions \ref{cond:subbadd} --- \ref{cond:cone} and \ref{cond:setup_diff}---\ref{cond:M_infty}, the function $\Lambda$ is differentiable at $v.$ Additionally, for $w\in H,$
    \begin{equation}
        \label{eq:H-diff-of Lambda}
        \langle  \nabla \Lambda(v), w\rangle = \lim_{T\to \infty} \frac{1}{T}\langle \nabla_H B(v,v,\gamma^T(v)),w\rangle .
    \end{equation}
\end{theorem}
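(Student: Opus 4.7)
The plan hinges on a ``touching upper bound'' argument. Define the smooth function
\[
f_T(w) := \tfrac{1}{T} B(w, v, \gamma^T(v)), \quad w \in H(\delta).
\]
Since $\gamma^T(v) \in \SP_{0,Tv,*}$ we have $f_T(w) \geq \tfrac{1}{T}\Bmin^T(w,v)$ on $H(\delta)$, and at $w=v$ equality holds by \eqref{eq:B-minimizer} together with \eqref{eq:psi_is_gamma}, giving $f_T(v) = \tfrac{1}{T}\Amin^T(v) \to \Lambda(v)$. Assumption~\ref{cond:M_infty} ensures $f_T \in C^2(H(\delta))$ with $\|\nabla_H^2 f_T\|_\infty \leq M_T$, where $M_T := (1/T)\sup_{H(\delta)}\|\nabla_H^2 B(\cdot,v,\gamma^T(v))\|$ satisfies $\limsup_T M_T = M_\infty < \infty$ a.s. Fixing a countable dense $D \subset H(\delta)$ and using~\eqref{eq:B_ShapeTheorem}, off a single null set we have simultaneously $M_\infty < \infty$, $f_T(v) \to \Lambda(v)$, and $\liminf_T f_T(w) \geq \Lambda(w)$ for every $w \in D$. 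I work on this full-measure set henceforth.

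First I will argue that $g_T := \nabla_H f_T(v) = (1/T)\nabla_H B(v,v,\gamma^T(v))$ is bounded in $T$. If not, extract a subsequence with $|g_{T_n}|\to\infty$ and $g_{T_n}/|g_{T_n}| \to h_* \in H$, $|h_*|=1$; choose $\epsilon>0$ small enough that $v - \epsilon h_* \in H(\delta)$, and $w \in D$ close to $v - \epsilon h_*$ so that $\langle g_{T_n}, w-v\rangle \leq -\tfrac{\epsilon}{2}|g_{T_n}|$ for all large $n$. Taylor's theorem with Lagrange remainder yields
\[
f_{T_n}(w) \leq f_{T_n}(v) + \langle g_{T_n}, w-v\rangle + \tfrac{M_{T_n}}{2}|w-v|^2 \to -\infty,
\]
contradicting $\liminf_n f_{T_n}(w) \geq \Lambda(w) > -\infty$ (finiteness from Theorem~\ref{thm:uniformConvergenceShape}). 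So $(g_T)$ is bounded; extract $g_{T_n} \to g^*$ along some subsequence. Passing to liminf in the Taylor upper bound at $w \in D$ and extending to all of $H(\delta)$ by continuity of the convex function $\Lambda$ on $\Cone^\circ$ gives
\[
\Lambda(w) - \Lambda(v) \leq \langle g^*, w-v\rangle + \tfrac{M_\infty}{2}|w-v|^2, \qquad w \in H(\delta).
\]

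Evaluating this one-sided bound at both $v + \epsilon h$ and $v - \epsilon h$ for $h \in H$ and small $\epsilon$, and invoking midpoint convexity of $\Lambda|_{v+H}$ (equivalently, monotonicity of one-sided slopes along the line through $v$), pinches each difference quotient and yields
\[
\Lambda(v \pm \epsilon h) - \Lambda(v) = \pm \epsilon \langle g^*, h\rangle + O(\epsilon^2).
\]
Thus $\Lambda|_{v+H}$ is differentiable at $v$ with derivative $g^*$; in particular $g^*$ is uniquely determined, so the entire sequence $g_T$ converges to it, which is precisely~\eqref{eq:H-diff-of Lambda}. Combining this directional differentiability along the codimension-one slice $v+H$ with radial differentiability in direction $v$ coming from $1$-homogeneity (using $v \notin H$), Lemma~\ref{lem:diffLemma} upgrades the result to full differentiability of $\Lambda$ at $v$. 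The hard part will be the very first move: the usable a.s.\ information is only a one-sided lower bound $\liminf_T f_T \geq \Lambda$ against a random quadratic envelope for $f_T$; it is the deterministic midpoint convexity of $\Lambda$ that converts this into a two-sided first-order estimate and rescues the argument.
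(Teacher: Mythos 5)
Your proposal is essentially the paper's own proof: you reuse the minimizer $\gamma^T(v)$ as a competitor for nearby $w\in H(\delta)$, Taylor-expand $w\mapsto B(w,v,\gamma^T(v))$ around $v$ with the uniform second-derivative bound from \ref{cond:M_infty} to get the one-sided quadratic majorization of $\tfrac1T\Bmin^T(\cdot,v)$ against the limits \eqref{eq:B_ShapeTheorem} on a countable dense set, and then finish with a deterministic convexity step plus Lemma~\ref{lem:diffLemma} — the only difference being that the paper packages that deterministic step as Lemma~\ref{lem:deterministicDifferentiability} (ruling out escaping $\xi_n$ and identifying every subsequential limit with every subgradient), whereas you re-derive the same conclusion by the equivalent midpoint-convexity pinching at $v\pm\epsilon h$. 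One cosmetic caveat: you invoke Theorem~\ref{thm:uniformConvergenceShape} for $\Lambda>-\infty$, which formally requires \ref{cond:for-finiteness}, not listed among the hypotheses of Theorem~\ref{thm:mainThm}; the paper handles this by taking $f=\Lambda$ real-valued in Lemma~\ref{lem:deterministicDifferentiability}, so the substance of your argument is unaffected.
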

\begin{remark} A convex function is differentiable on an open set iff it is $C^1$ on that set. Thus, if the conditions of the theorem hold for all $v\in\Cone^\circ$, then  $\Lambda\in C^1(\Cone^\circ)$. 
\end{remark}

The proof is based on the representation~\eqref{eq:B_ShapeTheorem} and the following two lemmas. The first of them implies that it suffices to check differentiability of $\Lambda$ relatively to~$H$. The second one  is at the core of our argument. It is a minor modification of Lemma 3.3 of \cite{bakhtin2023differentiability}. We give a proof of these lemmas in Section~\ref{sec:uniformConvergence}.%{sec:differentiability-lemmas}. 
\begin{lemma}\label{lem:diffLemma}
    Let $f:\Cone^\circ\to \R$ be a continuous function such that its restriction to $H(\delta)$ is differentiable at $v$.
    Also, suppose $f$ satisfies $f(sw) = s f(w)$ for $w$ in a neighborhood of $v$ and all sufficiently small $s>0$. Then $f$ is differentiable at $v$.
\end{lemma}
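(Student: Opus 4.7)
The plan is to use the direct sum decomposition $\R^d = \R v \oplus H$, which is available because $v \notin H$ and $\dim H = d-1$. Let $\tau : \R^d \to \R$ and $\pi_H : \R^d \to H$ be the associated continuous linear projections, so every $u \in \R^d$ satisfies $u = \tau(u) v + \pi_H(u)$; in particular $\tau(v) = 1$ and $\pi_H(v) = 0$. For $w \in \Cone^\circ$ close to $v$, I set $s(w) := \tau(w) = 1 + \tau(w-v)$ (positive and near $1$) and $h(w) := \pi_H(w)/s(w) \in H$ (small), so that $w = s(w)(v + h(w))$. Since $v + h(w) \to v \in \Cone^\circ$ stays in the affine hyperplane $v + H$, the point $v + h(w)$ lies in $H(\delta)$ once $|w-v|$ is small enough.

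First I would promote the local homogeneity hypothesis to the scaling factor $s(w)$, which is near $1$ rather than near $0$. This is an elementary consequence of the assumption: if $f(sw') = sf(w')$ for all $w'$ in a neighborhood $U$ of $v$ and all sufficiently small $s>0$, then for $w' \in U$, $t$ near $1$ with $tw' \in U$, and any $s > 0$ small enough that both $s$ and $st$ lie in the allowed range, I have $s f(tw') = f(s \cdot tw') = f((st) w') = st f(w')$, so $f(tw') = t f(w')$. Applied to $w' = v + h(w)$ and $t = s(w)$, this yields the key identity $f(w) = s(w)\, f(v + h(w))$.

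Using the differentiability of $f|_{H(\delta)}$ at $v$, I substitute $f(v + h(w)) = f(v) + \langle \nabla_H f(v), h(w)\rangle + o(|h(w)|)$ and multiply by $s(w)$. Since $s(w)\, h(w) = \pi_H(w-v)$ \emph{exactly}, and $|h(w)| = O(|w-v|)$, $s(w) = 1 + O(|w-v|)$, the expansion collapses to
\[
    f(w) = f(v) + \tau(w-v)\, f(v) + \langle \nabla_H f(v), \pi_H(w-v)\rangle + o(|w-v|),
\]
identifying the linear functional $L(u) := \tau(u)\, f(v) + \langle \nabla_H f(v), \pi_H(u)\rangle$ as the Fréchet derivative of $f$ at $v$. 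The main subtlety is fusing two separate first-order pieces of information --- the derivative transverse to $v$ (from the hypothesis on $H(\delta)$) and the derivative along $\R v$ (forced by homogeneity to equal $f(v)$) --- into a single linear functional with a genuine $o(|w-v|)$ remainder; the multiplicative structure $f(w) = s(w)\, f(v + h(w))$, together with the exact identity $s(w) h(w) = \pi_H(w-v)$, is what makes the cross terms $\tau(w-v)\langle\nabla_H f(v), h(w)\rangle$ and $s(w) \cdot o(|h(w)|)$ collapse into the $o(|w-v|)$ remainder cleanly.
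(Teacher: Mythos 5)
Your proof is correct and takes essentially the same route as the paper's: rescale $w$ onto the affine hyperplane $v+H$ using the coefficient of $v$ in the decomposition $\R^d=\R v\oplus H$, apply the homogeneity $f(tw')=tf(w')$ for $t$ near $1$, and combine with differentiability of the restriction to $H(\delta)$; the derivative you obtain, $u\mapsto \tau(u)f(v)+\langle\nabla_H f(v),\pi_H(u)\rangle$, coincides with the paper's $(\langle F,v\rangle-f(v))\nabla s(v)+F$. The only differences are cosmetic: you use the exact identity $s(w)h(w)=\pi_H(w-v)$ in place of the paper's Taylor expansion of its nonlinear scaling map $s$ (together with continuity of $f$), and you make explicit the promotion of the homogeneity hypothesis from small $s$ to scaling factors near $1$, which the paper uses tacitly.
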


\begin{lemma}\label{lem:deterministicDifferentiability}
    Let $\mathcal{D}\subset H(\delta)$ be dense in $H(\delta)$, $(f_n)_{n\in \N}$ be a sequence of functions from $H(\delta)$ to $\R$, and $f:H(\delta)\to \R$ be a function such that for all $w\in \mathcal{D}\cup\{v\}$,
    \begin{equation}
        \label{eq:f_n-to-f}
        \lim_{n\to \infty}f_n(w) = f(w).
    \end{equation}
    Suppose also that there exists a sequence of vectors $(\xi_n)_{n\in \N}$ in $H$ and a function $h:H(\delta)\to \R$ such that the following holds:
    \begin{enumerate}
        \item\label{linearDomination1} For all $w\in \mathcal{D}$ and $n\in \N,$ 
        \begin{equation}\label{finiteDerivativeInequality}
            f_n(w) - f_n(v) \le \langle \xi_n, w-v \rangle + h(w),
        \end{equation}
        \item\label{linearDomination2} $\lim_{w\to v}\frac{h(w)}{|w-v| } = 0$.
    \end{enumerate}
    If $f$ is convex, then: $f$ is differentiable at $v$ (relatively to $H$), the sequence $(\xi_n)_{n\in \N}$ converges, and 
    \begin{equation}\label{convexImpliesDifferentiable}
        \nabla_H f(v) = \lim_{n\to \infty}\xi_n.
    \end{equation}
\end{lemma}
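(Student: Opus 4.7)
The plan is to first show that $(\xi_n)$ is bounded in $H$, then show that every convergent subsequential limit coincides with the (necessarily unique) subgradient of $f$ at $v$ relative to $H$; together these imply convergence of $(\xi_n)$ and differentiability. For boundedness, I would rewrite the hypothesis as $\langle \xi_n, w-v\rangle \ge f_n(w)-f_n(v)-h(w)$ for $w\in\mathcal{D}$: each right-hand side converges to the finite value $f(w)-f(v)-h(w)$, so $\langle \xi_n, w-v\rangle$ is bounded below in $n$ for every fixed $w\in\mathcal{D}$. Arguing by contradiction, suppose $|\xi_{n_k}|\to\infty$ along a subsequence, and, passing to a further subsequence, assume $\xi_{n_k}/|\xi_{n_k}|\to e^\ast$ for some unit $e^\ast\in H$. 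Since $H(\delta)$ is a relative neighborhood of $v$ in $v+H$, the point $v-\epsilon e^\ast$ lies in $H(\delta)$ for small $\epsilon>0$; by density I pick $w\in\mathcal D$ with $|w-(v-\epsilon e^\ast)|<\epsilon/4$, so that $w-v=-\epsilon e^\ast+r$ with $|r|<\epsilon/4$. For $k$ large enough that $\langle \xi_{n_k}/|\xi_{n_k}|, e^\ast\rangle\ge 3/4$,
\[
\langle \xi_{n_k}, w-v\rangle \le -\tfrac{3\epsilon}{4}|\xi_{n_k}| + \tfrac{\epsilon}{4}|\xi_{n_k}| = -\tfrac{\epsilon}{2}|\xi_{n_k}|\to -\infty,
\]
contradicting the lower bound.

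\textbf{Identification of limits.} Let $\xi^\ast$ be any subsequential limit along some $n_k$. Passing to the limit in the hypothesis yields
\[
f(w)-f(v)\le\langle \xi^\ast, w-v\rangle + h(w), \qquad w\in\mathcal{D}.
\]
Since $f$ is convex on the relatively open set $H(\delta)$ of $v+H$, standard convex analysis produces at least one subgradient $\eta\in H$ of $f$ at $v$: $f(w)-f(v)\ge\langle \eta,w-v\rangle$ for all $w\in H(\delta)$. Subtracting gives $\langle \eta-\xi^\ast, w-v\rangle\le h(w)$ for $w\in\mathcal{D}$. For any unit $e\in H$ and small $t>0$, density lets me choose $w_t\in\mathcal D$ with $|w_t-(v+te)|\le t^2$; writing $w_t-v = te+r_t$, dividing the inequality by $t$, and sending $t\to 0$ (note $|w_t-v|\sim t$, so $h(w_t)/t\to 0$, and $|r_t|/t\le t\to 0$), I obtain $\langle \eta-\xi^\ast, e\rangle\le 0$. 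Since $e\in H$ is arbitrary, $\eta=\xi^\ast$, so the subdifferential of $f$ at $v$ relative to $H$ is the singleton $\{\xi^\ast\}$; this gives both differentiability of $f$ at $v$ with $\nabla_H f(v)=\xi^\ast$ and, by uniqueness of subsequential limits combined with boundedness, convergence of the full sequence $\xi_n\to\nabla_H f(v)$.

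\textbf{Main obstacle.} The subtlety is that $h$ is controlled only by $h(w)/|w-v|\to 0$ as $w\to v$, with no further regularity (not even continuity) assumed. This forces quantitatively careful density approximations: the error in replacing $v+te$ by a nearby $w_t\in\mathcal{D}$ must be of order $o(t)$ (I take $t^2$) so that the parasitic term $\langle\eta-\xi^\ast,r_t/t\rangle$ vanishes in the limit, and in the boundedness step the tolerance must be a small fraction of $\epsilon$ to survive the alignment loss between $\xi_{n_k}/|\xi_{n_k}|$ and $e^\ast$. Once this bookkeeping is arranged the rest is routine convex analysis, so I do not anticipate any deeper difficulty.
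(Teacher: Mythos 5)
Your proposal is correct and follows essentially the same route as the paper's proof: rule out divergent subsequences of $(\xi_n)$ via the normalized limit direction, pass to the limit in \eqref{finiteDerivativeInequality} to compare any subsequential limit $\xi^*$ with a subgradient of $f$ at $v$ relative to $H$, and use density of $\mathcal{D}$ together with $h(w)=o(|w-v|)$ to force equality in every direction, yielding a singleton subdifferential, differentiability, and convergence of the full sequence. The only difference is cosmetic bookkeeping (your $t^2$-accurate approximants versus the paper's sequences $w_m(\zeta)$ with $(w_m(\zeta)-v)/|w_m(\zeta)-v|\to\zeta$).
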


\begin{proof}[Proof of Theorem~\ref{thm:mainThm}]

    Let $w\in v+H$ be such that $|w-v|<\delta.$ We have 
    \begin{align*}
        \Bmin^T(w,v) &\le B(w,v,\gamma^T(v))\\
        & = B(v,v,\gamma^T(v)) +  B^T(w,v,\gamma^T(v)) - B(v,v,\gamma^T(v))\\
        & \le \Bmin^T(v,v) + \langle \nabla_H B(v,v,\gamma^T(v)),w-v\rangle 
        \\ &\qquad\qquad\qquad\qquad\qquad + \frac{1}{2}\sup_{u\in H(\delta)}\|\nabla^2_H B^T(u,v,\gamma^T(v))\|\cdot|v-w|^2 ,
    \end{align*}
    where we used~\eqref{eq:B-minimizer} and the Taylor expansion.
    It follows that for sufficiently large~$T$,
    \begin{multline}\label{eq:linearDominationB}
        \frac{1}{T}\Bmin^T(w,v) \le \frac{1}{T}\Bmin^T(v,v) + \frac{1}{T}\langle \nabla_H B^T(v,v,\gamma^T(v)),w-v\rangle \\ + \frac{1}{2}|w-v|^2 (M_\infty + 1),
    \end{multline}
    where $M_\infty$ is as defined in \eqref{eq:M_infty}. 
Taking an arbitrary countable dense set $\mathcal{D}\subset H(\delta)$, $f_n(\cdot)=n^{-1}\Bmin^{n}(\cdot,v)$,  $f=\Lambda$,  $\xi_n=\nabla_H B(v,v,\gamma^T(v)) $, $h(w)=\frac{1}{2}|w-v|^2(M_\infty+1)$, and noticing that~\eqref{eq:f_n-to-f} for $v\in\mathcal{D}\cup\{v\}$
is a consequence of~\eqref{eq:B_ShapeTheorem},  \eqref{finiteDerivativeInequality} is a consequence of \eqref{eq:linearDominationB}, and recalling that $\Lambda$ is convex, we apply  Lemma~\ref{lem:deterministicDifferentiability}  to 
conclude that $\Lambda$ is differentiable at $v$ relative to $H$ with derivative given by~\eqref{eq:H-diff-of Lambda}. To complete  the proof of the lemma, it now suffices to apply Lemma~\ref{lem:deterministicDifferentiability}.
\end{proof}

\begin{remark}
    If instead of \eqref{eq:M_infty} we assume that for some $v,w\in \Cone^{\circ}$ there is $\delta > 0$ such that 
    \begin{equation}\label{eq:M_infty2}
        \limsup_{T\to\infty}\frac{1}{T}\sup_{w'\in w+H\,:\,|w'-w|<\delta} \|\nabla_H^2 B(w',v,\psi^T(w,v))\|  < \infty,
    \end{equation}
    then a similar proof to that of \Cref{thm:mainThm}, except using a Taylor expansion around $w$ rather than around $v$, shows that $\Lambda$ is differentiable at $w$ and for all $u\in H,$
    \[ \langle \nabla \Lambda(w),u\rangle = \lim_{T\to \infty} \frac{1}{T}\langle\nabla_H B(w,v,\psi^T(w,v)), u\rangle\]
    almost surely.
\end{remark}

\begin{remark}\label{rem:orthogonalDirectionsRemark}
    One can also state a version of \Cref{thm:mainThm} where $H(\delta)$ is replaced by a  $(d-1)$-dimensional $C^2$-hypersurface in $\R^d$ containing $v$ and transversal to the radial direction at $v$, provided that the analogous condition to \eqref{eq:M_infty} holds when $\nabla_H^2 B$ is replaced by the intrinsic second derivative. 
\end{remark}

\begin{remark}\label{rem:HolderRemark}
    The $C^2$ condition in \eqref{eq:M_infty} or \eqref{eq:M_infty2} could be replaced by an analogous $C^{1 + \alpha}$ condition (that is, a bound on the H\"{o}lder constant of the first derivative). In this case, a bound similar to \eqref{eq:linearDominationB} will hold except the $|v-w|^2$ term is replaced by $|v-w|^{1+\alpha}.$ Since this term is $o(|v-w|)$, \Cref{lem:deterministicDifferentiability} can still be applied.
\end{remark}

Finally, 
we can state a result on differentiability of the boundary of the limit shape defined by 
    \[M=\partial \rball_\Lambda \cap \Cone^\circ =\{v\in \Cone^\circ: \Lambda(v)=1 \}.\] 
The following theorem follows directly from
\Cref{thm:mainThm}, \Cref{rem:limit-shape-strong}, $1$-homogeneity of $\Lambda$, and the implicit function theorem.
\begin{theorem}\label{thm:limit-shape-diff} 
        Suppose $M$ is nonempty, and assume that the conditions of Theorem~\ref{thm:mainThm} are satisfied for all 
        $v\in M$. Then $M$ is  a $C^1$  manifold. If
        \begin{align}
            \label{cond:Lambda-cone}
            \Cone=\R^d \text{\ and }\  \Lambda(v)>0 \text{\ for all } v\ne 0,
        \end{align}
        then $M$ is $C^1$-diffeomorphic to the $(d-1)$-dimensional sphere.
\end{theorem}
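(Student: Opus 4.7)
The plan is to verify the hypotheses of the implicit function theorem on an open neighborhood of $M$, and then, in the more restrictive setting \eqref{cond:Lambda-cone}, to exhibit an explicit radial $C^1$ diffeomorphism onto $S^{d-1}$. For each $v \in M$, Theorem~\ref{thm:mainThm} gives differentiability of $\Lambda$ at $v$. Differentiating the $1$-homogeneity identity $\Lambda(sv) = s\Lambda(v)$ in $v$ yields $\nabla\Lambda(sv) = \nabla\Lambda(v)$ for every $s>0$, so differentiability propagates along rays, and $\Lambda$ is therefore differentiable on the cone $\hat M = \{sv : s>0,\ v\in M\}$.

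To upgrade pointwise differentiability to $C^1$ regularity, I invoke the remark immediately after Theorem~\ref{thm:mainThm}: a convex function differentiable on an open set is $C^1$ there. For this I need $\hat M$ to be open, which is the one point that requires a moment of thought. Since $\Lambda$ is convex and finite on $\Cone^\circ$, it is continuous there. One checks that $\hat M = \{v \in \Cone^\circ : \Lambda(v) > 0\}$: if $\Lambda(w)>0$, then $w = \Lambda(w)\cdot(w/\Lambda(w))$ with $w/\Lambda(w)\in M$. Hence $\hat M$ is the preimage of an open set under a continuous function, and thus open, giving $\Lambda \in C^1(\hat M)$.

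Next I verify the nondegeneracy required by the implicit function theorem. Euler's identity for the $1$-homogeneous function $\Lambda$, obtained by differentiating $\Lambda(tv) = t\Lambda(v)$ at $t=1$, gives $\langle\nabla\Lambda(v), v\rangle = \Lambda(v) = 1$ for every $v \in M$, so in particular $\nabla\Lambda(v) \ne 0$. The implicit function theorem applied to $\Lambda:\hat M\to\R$ at the level $1$ then shows that $M = \Lambda^{-1}(1) \cap \hat M$ is a $C^1$-embedded $(d-1)$-dimensional submanifold of $\R^d$.

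Under the additional hypothesis \eqref{cond:Lambda-cone}, the open cone $\hat M$ coincides with $\R^d\setminus\{0\}$, the function $\Lambda$ restricted to $S^{d-1}$ is $C^1$ and strictly positive (hence bounded below by some $c>0$ by compactness of $S^{d-1}$), and $\rball_\Lambda$ is a compact convex body containing the origin in its interior. The radial map $\phi:S^{d-1}\to M$ defined by $\phi(u)=u/\Lambda(u)$ is then a $C^1$ bijection whose inverse $v\mapsto v/|v|$ is also $C^1$, establishing the claimed diffeomorphism. Remark~\ref{rem:limit-shape-strong} is available here and confirms that this $M$ is indeed the limiting effective front in the strong sense. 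The only genuine subtlety in the whole argument is the openness of $\hat M$, which is what allows the convexity-based upgrade from pointwise differentiability at each point of $M$ to $C^1$ regularity on a full neighborhood of $M$.
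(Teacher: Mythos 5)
Your argument is correct and follows essentially the same route the paper indicates (the paper gives no detailed proof, stating only that the result follows from Theorem~\ref{thm:mainThm}, $1$-homogeneity, Remark~\ref{rem:limit-shape-strong}, and the implicit function theorem): you use differentiability at points of $M$, propagate it along rays by homogeneity and upgrade to $C^1$ via convexity on the open cone $\hat M$, get nondegeneracy from Euler's identity, apply the implicit function theorem, and exhibit the radial map $u\mapsto u/\Lambda(u)$ for the sphere statement. The only loose phrase is ``differentiating the identity in $v$'' to get $\nabla\Lambda(sv)=\nabla\Lambda(v)$, which should be read as the chain-rule argument applied to $x\mapsto s\Lambda(x/s)$ at $x=sv$, as your own remark about propagation along rays makes clear.
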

Condition \eqref{cond:Lambda-cone} means that the graph of $\Lambda$  is a  cone with vertex at the origin and  section $M$.

Theorem~\ref{thm:main-informal} stated informally in Section~\ref{sec:intro} is a combination of rigorous
Theorems~\ref{thm:shapeFunction}--\ref{thm:limit-shape-diff}. 

Of course, the power of these theorems is that they apply to a broad class of situations satisfying requirements~\ref{cond:subbadd} -- \ref{cond:for-finiteness} and \ref{cond:setup_diff} -- \ref{cond:M_infty}.
In Sections~\ref{sec:exampleI} and~\ref{sec:exampleII}, we give two families of such models of FPP type. In Section~\ref{sec:directed}, we explain that these results also apply to the directed 
setting of stochastic HJB equations studied in~\cite{bakhtin2023differentiability}.

\section{Example I: Riemannian First Passage Percolation}\label{sec:exampleI}

The goal of this section is to describe a class of situations 
where the general requirements of Section~\ref{sec:general-results} hold.  This class is
defined via random Riemannian metrics.

\subsection{General setup for Riemannian FPP}\label{sec:RiemannianFPPeg}
  A Riemanninan metric on $\R^d$ is a function from $\R^d$ to the space of positive definite symmetric matrices 
\[\mathcal{M}^d_+ := \{M\in \R^{d\times d}\,:\, M^\top=M,\, M \text{\rm\ is positive definite\}}.\]
Note that $M\in \mathcal{M}^d_+$ can be identified with a quadratic form given by $M(u,u)=\langle Mu, u\rangle.$

For an absolutely continuous curve $\gamma:[0,t]\to\R^d$, its length under a Riemannian metric~$g$ defined by
 \begin{equation}\label{eq:AgammaDefFPP1}
     A(\gamma) = \int_0^t \sqrt{g_{\gamma_s}(\dot{\gamma}_s,\dot{\gamma}_s)}ds
 \end{equation}
plays the role of action. The distance between arbitrary $x,y\in\R^d$ is defined by 
 \begin{equation}\label{eq:ADef}
     \Amin(x,y) = \inf_{\gamma\in \SP_{x,y,*}}A(\gamma),
 \end{equation}
 and throughout this section the cone of admissible direction $\Cone$ is set to be $\R^d$.

 We will use the notation $\|M\|$ for the operator norm of a matrix~$M$.
For a $C^2$ function $f:\R^d\to \mathcal{M}^d_+$, we let 
\[\|f\|_{C^2,x} = \|f(x)\| + \sum_{i=1}^d \|\partial_{x_i}f(x)\| + \sum_{i,j=1}^d\|\partial_{x_jx_i}f(x)\|,\]
\[\|f\|_{C^2} = \sup_{x\in \R^d} \|f\|_{C^2,x}.\]
The space $C^2_{loc}(\R^d;\mathcal{M}^d_+)$ is the space of such functions $f$ such that $\sup_{x\in A}\|f\|_{C^2,x} < \infty$ for all bounded sets $A\subset \R^d$. We endow $C^2_{loc}(\R^d;\mathcal{M}^d_+)$ with the topology induced by the family of semi-norms given by $\sup_{x\in \Ball(0,n)}\|f\|_{C^2,x}$ for $n\in \N$. 

In this section, we require that $g$ is a random element of $C^2_{loc}(\R^d;\mathcal{M}^d_+)$, i.e., it is a measurable map
$g:\Omega\to C^2_{loc}(\R^d;\mathcal{M}^d_+)$. For a fixed $x\in \R^d$, this map gives a random matrix $g_x=g_{x,\omega}$.

\medskip

Let us  specify a set of general conditions that guarantee that the setting and assumptions of Section~\ref{sec:general-results} hold for
the distance given in~\eqref{eq:AgammaDefFPP1}--\eqref{eq:ADef}. Then our results on the shape function and limit shape including the differentiability result will be applicable to this class of models. 

\smallskip

Our first condition concerning stationarity and skew-invariance of $g$ is stated in terms of spatial translations defined in~\eqref{eq:theta}:
\begin{enumerate}[label=(C\arabic*), ref=\rm{(C\arabic*)}]
    % [({C}1)]
     \item\label{stationaryPhiCondition}
     %(\sout{Stationarity}\yb{sSkew-invariance}) 
     The probability space $(\Omega,\Fc,\Prb)$ is equipped with an ergodic $\Prb$-preserving group action  $(\theta_*^x)_{x\in\R^d}$ 
synchronized with translations $(\theta^x)_{x\in\R^d}$ on $\R^d$: 
     for every $x\in \R^d$, $\omega\in \Omega,$ and $y\in\R^d$, we have $g_{\theta^y x,\theta^y_\ast \omega} = g_{x,\omega}.$
\end{enumerate}

\smallskip
For our second condition, we need to fix an arbitrary $v\ne 0$, introduce a space~$H$, and
define a family of transformations $(\Xi_{v\to w})_{w\in v+ H}$ that will satisfy the conditions in \ref{cond:setup_diff}. 
Once $v$ is fixed, we define $H$ as the orthogonal complement to the line spanned by $v$. For $v,w\neq 0$ we define the transformation $\Xi_{v\to w}$ of $\R^d$ by
\begin{equation}\label{eq:XiFPP}
    \Xi_{v\to w} x = \frac{\langle v,x\rangle}{|v|^2} w  - \frac{\langle v,x\rangle  }{|v|^2}v + x=
    \frac{\langle v,x\rangle}{|v|^2} (w-v) + x.
\end{equation}
This is a convenient choice for the models we are mostly concerned with but  
other choices of $H$ and $\Xi_{v\to w}$ are possible. 

The map $\Xi_{v\to w}$ acts on $\SP$ in a pointwise manner: $(\Xi_{v\to w}\gamma)_s = \Xi_{v\to w}\gamma_s.$ Note that $\Xi_{v\to v}$ is the identity map. These maps satisfy \ref{cond:setup_diff} and preserve volume, which  is useful in applications. We summarize these facts in the lemma below.

\begin{lemma}\label{lem:XiProperties}
    If $v\ne 0$ and $w\in v+H$, then $\Xi_{v\to w}$ is a volume preserving transformation on $\R^d$; additionally, $\Xi_{v\to w}$ is a measurable bijection from $\SP_{0,Tv,*}$ to $\SP_{0,Tw,*}$.
\end{lemma}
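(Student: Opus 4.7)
The plan is to exploit the explicit linear form of $\Xi_{v\to w}$. Writing $\Xi_{v\to w}$ as the linear map $I + |v|^{-2}(w-v)v^{\top}$, I first record the key orthogonality: since $w\in v+H$ and $H$ was defined as the orthogonal complement of $v$, we have $\langle v, w-v\rangle = 0$. Everything else flows from this single fact.

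For volume preservation, I would apply the matrix determinant lemma $\det(I + ab^{\top}) = 1 + b^{\top}a$ with $a = |v|^{-2}(w-v)$ and $b = v$, giving $\det \Xi_{v\to w} = 1 + |v|^{-2}\langle v, w-v\rangle = 1$. Thus $\Xi_{v\to w}$ preserves Lebesgue measure on $\R^d$. (Equivalently, one can observe that $\Xi_{v\to w}$ is a shear along the direction $w-v$, which is perpendicular to $v$.)

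For the bijection claim on path spaces, I would first invert the map on $\R^d$ by noting that $\langle v, \Xi_{v\to w}x\rangle = \langle v, x\rangle$ (again using $\langle v, w-v\rangle = 0$), so the inverse is explicitly $y\mapsto y - |v|^{-2}\langle v, y\rangle(w-v)$. Then I would verify the endpoints: $\Xi_{v\to w}(0)=0$ trivially, and $\Xi_{v\to w}(Tv) = T(w-v) + Tv = Tw$. Because $\Xi_{v\to w}$ is linear, it is continuous (hence measurable), and applying it pointwise to $\gamma \in W^{1,1}([0,t];\R^d)$ produces another $W^{1,1}$ path with $(\Xi_{v\to w}\gamma)^{\,\cdot} = \Xi_{v\to w}\dot\gamma$; thus $\Xi_{v\to w}$ maps $\SP_{0,Tv,t}$ into $\SP_{0,Tw,t}$. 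The inverse on paths is the pointwise application of $\Xi_{v\to w}^{-1}$, which has the same form (shear by $-(w-v)$), so it also preserves Sobolev regularity and has the correct endpoints. This gives the bijection $\SP_{0,Tv,*}\to \SP_{0,Tw,*}$, and measurability is immediate from continuity on each $\SP_{0,Tv,t}$ and the disjoint-union structure over $t$.

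There is no substantive obstacle here; the lemma is essentially a direct computation. The only mild care required is checking that the shear direction $w-v$ is indeed orthogonal to $v$ (which is precisely why $H$ was taken to be $v^{\perp}$), because both the unit Jacobian determinant and the endpoint mapping $Tv\mapsto Tw$ rely on that orthogonality.
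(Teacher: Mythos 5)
Your proof is correct and follows essentially the same route as the paper's: both show that $\Xi_{v\to w}$ is a unimodular linear shear that fixes $H=v^{\perp}$ and sends $Tv$ to $Tw$, then deduce the path-space bijection from invertibility of the linear map. The only difference is cosmetic --- you compute $\det\Xi_{v\to w}=1$ via the rank-one determinant identity $\det(I+ab^{\top})=1+\langle b,a\rangle$, whereas the paper passes to the basis $(u_1,\dots,u_{d-1},v)$ where the map is a triangular shear matrix; your explicit inverse formula and the $W^{1,1}$ pointwise check are fine additions but not needed beyond what the paper records.
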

We postpone a proof of these properties until~\Cref{sec:appendix}. Here we only mention that the volume preserving property implies that
the homogeneous Poisson process in $\R^d$ (that our example models will be based upon) is distributionally invariant under $\Xi_{v\to w}$ for all $w\in v+H$.

Our next condition means that  $g$ and its image under $\Xi_{v\to w}$ can be efficiently
coupled, with controlled errors.
 \begin{enumerate}[resume*]
    % [({C}1)]
    % \setcounter{enumi}{\value{tempcounter}}
    \item\label{phiwDiffCondition}
    %(Stochastic domination) 
     There is a family of $\Prb$-preserving transformations $(\Xi_{v\to w}^*)_{w\in v+H}$ on $\Omega$ (with $\Xi_{v\to v}^*$ the identity), a number $\delta\in(0,1)$ and a random field $\YField:\R^d\to [0,\infty)$ 
      such that:
      \begin{enumerate}[label = (\roman*), ref=\theenumi{}\rm{\roman*}]
     \item\label{gBound} for all $x\in\R^d$, all $w\in \R^d$ satisfying $|w-v|<\delta$ and all $\omega\in \Omega,$ 
     the random field $g^{w,v}_{x,\omega} := g_{\Xi_{v\to w} x, \Xi_{v\to w}^* \omega}$ satisfies 
     \begin{equation}\label{eq:boundForg}
        \|g_x^{w,v}\| + \sum_{i=1}^d\|\partial_{w_i}g_x^{w,v}\| + \sum_{i,j=1}^d\|\partial_{w_j}\partial_{w_i}g_x^{w,v}\| \le \YField(x);
     \end{equation}
    \item\label{boundedY} $\YField$ satisfies the following conditions:
    \begin{enumerate}[label = (\alph*)]
        \item\label{Ystationary}(stationarity) $\YField$ is stationary with respect to lattice shifts: For all $a\in \Z^d,$ the collection $(\YField(x+a))_{x\in \R^d}$ is equal in distribution to $(\YField(x))_{x\in \R^d}$;
        \item\label{YFiniteRange}(finite range) $\YField$ has a finite range of dependence: there is $R>0$ such that if  $\inf_{x\in A,y\in B}|x-y| > R$ holds for sets $A,B\subset \R^d$, then $(\YField (x))_{x\in A}$ and $(\YField (x))_{x\in B}$ are independent;
        \item\label{YMomentCondition}(finite moments) $\sup_{x\in [0,1]^d} |\YField (x)|$ is measurable and for some $\beta>4d$, 
        \[\E\Big[ \sup_{x\in [0,1]^d} |\YField (x)|^{\beta}\Big] < \infty.\]   
     \end{enumerate}
    \end{enumerate}
\end{enumerate}
The condition \ref{phiwDiffCondition} implies that $\|g_x\| \le \YField(x)$ for all $x\in \R^d$ since $\Xi_{v\to v}$ and $\Xi_{v\to v}^*$ are the identity maps. Also, note that we do not require finite range dependence on the field $g$ itself
in \ref{phiwDiffCondition}, we only need that it is dominated by a finite range field.

The last condition we need is uniform positive definiteness of the random Riemannian metric:
\begin{enumerate}[resume*]
        % [({C}1)]
        % \setcounter{enumi}{\value{tempcounter}}
    \item\label{uniformPositiveDefinite} There is $\lambda > 0$ such that $g_{x,\omega}(p,p) \ge \lambda |p|^2$ for all $p\in \R^d$, $x\in \R^d$, and $\omega\in \Omega$.
 \end{enumerate}

 Since for any path $\gamma$, $\frac{d}{dt}(\Xi_{v\to w}\gamma)_s = \Xi_{v\to w}\dot{\gamma}_s$, the transformed action introduced in \eqref{eq:AB_equality} can be rewritten for this model as
 \begin{equation}
    \label{eq:Bwvgamma1}
     B(w,v,\gamma) = \int_0^t\sqrt{ g^{w,v}_{\gamma_s}(\Xi_{v\to w}\dot{\gamma}_s,\Xi_{v\to w} \dot{\gamma}_s) }ds,\quad \gamma\in \SP_{0,Tv,t},
 \end{equation} 
 and the minimal transformed action $B^T(w,v)$ is defined according to~\eqref{eq:Bdef}. For $i=1,\dots, d$ and $x,p,v\in \R^d$ we let $h_x^i(p;v)$ denote $\partial_{w_i}g^{w,v}_x(p,p)\Big|_{w=v}.$
\begin{theorem}\label{th:main_riemannian} 
    Under assumptions \ref{stationaryPhiCondition}---\ref{uniformPositiveDefinite} and the notation defined above, all theorems of Section~\ref{sec:general-results} hold. 
\end{theorem}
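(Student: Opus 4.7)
The plan is to verify in order the two blocks of general conditions from Section~\ref{sec:general-results}: the baseline conditions \ref{cond:subbadd}--\ref{cond:for-finiteness} and then the differentiability conditions \ref{cond:setup_diff}--\ref{cond:M_infty}. Once these are in hand, Theorems~\ref{thm:shapeFunction}--\ref{thm:limit-shape-diff} apply directly. The verification of the baseline conditions is largely routine; the core of the proof is the uniform quadratic control \ref{cond:M_infty}.

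Subadditivity \ref{cond:subbadd} is actually an equality because the Riemannian length of a concatenation is the sum of lengths. Skew-invariance \ref{cond:skew-invariance} follows immediately from \ref{stationaryPhiCondition}: the identity $g_{y+x,\theta_*^x\omega} = g_{y,\omega}$ substituted into the integral defining $A_{\theta_*^x\omega}(\theta^x\gamma)$ gives $A_\omega(\gamma)$. Joint measurability \ref{cond:measurability} is standard. For \ref{cond:cone}, we take $\Cone=\R^d$; the lower bound $g\ge \lambda I$ from \ref{uniformPositiveDefinite} yields $\Amin(x,y)\ge \sqrt{\lambda}|x-y|$, so the Riemannian manifold $(\R^d,g_\omega)$ is metrically complete with precompact bounded sets, and Hopf--Rinow supplies a minimizing geodesic $\gamma_\omega(x)\in\SP_{0,x,*}$ for each $\omega$ and $x$. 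The integrability \eqref{eq:expected_action_finite} is obtained by comparison with the straight-line path: $\Amin(0,rx)\le |x|\int_0^1\sqrt{\YField(rsx)}\,ds$, which has finite first moment by stationarity and \ref{YMomentCondition}. Finally, \ref{cond:for-finiteness} reduces to a positive-probability linear upper bound on $\Amin(0,x)/|x|$, obtained from the same straight-line comparison and Birkhoff applied to $\sqrt{\YField}$ along rays in a properly contained subcone.

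For \ref{cond:setup_diff}, we take $H=\{v\}^\perp$ with $\delta$ from \ref{phiwDiffCondition} (shrunk if necessary so that $\|\Xi_{v\to w}-I\|\le 1/2$, hence $\Xi_{v\to w}$ is uniformly bi-Lipschitz for $w\in H(\delta)$). The path-space measurability and bijectivity of $\Xi_{v\to w}$ from \eqref{eq:XiFPP} are supplied by \Cref{lem:XiProperties}, and the $\Xi_{v\to w}^*$ are $\Prb$-preserving by \ref{phiwDiffCondition}. With these choices $B$ takes the form \eqref{eq:Bwvgamma1}.

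The main obstacle is verifying \ref{cond:M_infty}. Write $F(w,x,p) = \sqrt{g_x^{w,v}(\Xi_{v\to w}p, \Xi_{v\to w}p)}$ so that $B(w,v,\gamma)=\int_0^t F(w,\gamma_s,\dot\gamma_s)\,ds$. Since $\partial_{w_iw_j}\Xi_{v\to w}=0$, a direct computation of $\partial_{w_i}$ and $\partial_{w_iw_j}$ of $F$ expresses the second derivative as a sum of terms of the schematic form $(\partial_w^{\le 2} g^{w,v})(\cdot,\cdot)/F^k$ with $k\in\{1,3\}$; by \ref{gBound} the numerators are bounded by $\YField(\gamma_s)|\dot\gamma_s|^2$ (or $\YField(\gamma_s)^2|\dot\gamma_s|^4$ for the $k=3$ term arising from differentiating the reciprocal of the square-root twice), while \ref{uniformPositiveDefinite} gives $F\ge c\sqrt{\lambda}|\dot\gamma_s|$ uniformly in $w\in H(\delta)$. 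Combining these estimates produces the pointwise bound
\[
    \sup_{w\in H(\delta)} \|\nabla_H^2 B(w,v,\gamma^T(v))\| \le C \int_0^{t^T} \bigl(\YField(\gamma^T_v(s))^2 + \YField(\gamma^T_v(s))\bigr)\,|\dot\gamma^T_v(s)|\,ds.
\]
The Euclidean length of $\gamma^T(v)$ is at most $A(\gamma^T(v))/\sqrt{\lambda}=O(T)$ by \ref{uniformPositiveDefinite} and \Cref{thm:uniformConvergenceShape}, so after arclength reparametrization we must show $\limsup_T T^{-1}\int_0^{\ell_T}(\YField^2+\YField)(\tilde\gamma^T_v(u))\,du<\infty$ almost surely. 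This is the technical heart: a naive bound $\ell_T\cdot\sup_{B(0,CT)}\YField^2$ loses by a factor $T^{2d/\beta}$ and is not enough. The remedy is to combine the fact that the minimizer visits only $O(T)$ unit cubes (a combinatorial consequence of the Euclidean length bound) with the finite-range dependence \ref{YFiniteRange} and the moment bound \ref{YMomentCondition}: enumerate lattice skeletons of possible trajectories (at most $c(d)^T$ of them), apply a concentration estimate to $\sum_{k\in S}M_k$ with $M_k=\sup_{k+[0,1]^d}(\YField^2+\YField)$ for each skeleton $S$ of size $O(T)$, and conclude via Borel--Cantelli along $T=2^n$. The assumption $\beta>4d$ is calibrated exactly so that $c(d)^T\cdot\Prb(\sum_{k\in S}M_k>CT)$ is summable, which closes the estimate. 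This combinatorial/moment argument is what is postponed to \Cref{sec:proof-of-main-riemannian}--\ref{sec:appendix}.
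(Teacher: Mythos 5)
Your verification of \ref{cond:subbadd}--\ref{cond:cone} and \ref{cond:setup_diff}, the derivative computation for $B$, and the reduction of \ref{cond:M_infty} to the bound $\sup_{w\in H(\delta)}\|\nabla_H^2 B(w,v,\gamma^T(v))\| \le C\int_{\gamma^T}(\YField^2+\YField)\,d\ell$ all run parallel to the paper (the paper works in the unit Riemannian speed parametrization, which is the same line integral). But the step you call the technical heart is wrong as stated. You propose to control $\int_{\gamma^T}(\YField^2+\YField)$ by enumerating the at most $c(d)^{O(T)}$ lattice skeletons of size $O(T)$, applying a concentration estimate to $\sum_{k\in S}M_k$ for each skeleton, and using a union bound plus Borel--Cantelli, claiming $\beta>4d$ is ``calibrated exactly'' so that $c(d)^T\,\Prb\{\sum_{k\in S}M_k>CT\}$ is summable. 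Under the hypotheses, $\YField$ has only polynomial moments, so $M_k=\sup_{I_k}(\YField^2+\YField)$ has only polynomial tails; already $\Prb\{\sum_{k\in S}M_k>CT\}\ge\Prb\{M_{k_0}>CT\}$, which for a field with tail $\asymp x^{-\beta/2}$ decays only polynomially in $T$ and cannot beat the exponential entropy $c(d)^T$. No choice of finite $\beta$ makes that union bound close; exponential moments would be needed. The correct tool is the greedy lattice animal theorem (the paper's \Cref{prop:latticeAnimalsFiniteRange}, extending \cite{MR1884923} to finite-range fields, used through \Cref{lem:abstractIntegralBound}), whose proof is precisely what replaces the naive union bound when only moments of order $>d$ are available; in the paper $\beta>4d$ enters because, after bounding the occupation time of each cube and applying Cauchy--Schwarz, one needs $\max(\YField,1)^2$ (and its square) to satisfy the animal-theorem moment threshold, not to defeat a $c(d)^T$ entropy count.

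A second, related defect: your per-cube accounting implicitly treats the length of $\gamma^T$ inside each unit cube as $O(1)$, since you sum the bare suprema $M_k$ over a skeleton of size $O(T)$. The Euclidean (or Riemannian) length of the minimizer inside $I_k$ is not deterministically bounded; the paper bounds the occupation time $\tau_{I_k}(\gamma^T)$ by $X_k\asymp\sup_{I_k}\YField^{1/2}$ using minimality (the straight segment between first entry and last exit is admissible, see \eqref{eq:psiTinGammaX}), and this random weight must be carried through the animal estimate (hence the Cauchy--Schwarz step in \Cref{lem:FPPSecondDerivativeBound}). The same issue infects your check of \ref{cond:for-finiteness}: Birkhoff's theorem along individual rays gives a null exceptional set per direction and cannot yield the required bound uniformly over the uncountable family $\{x:|x|>1\}$; the paper again uses the lattice-animal bound along straight segments (\Cref{lem:linearGrowthRiemannianFPP}). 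Finally, you dismiss \ref{cond:measurability} and the measurable selection in \ref{cond:cone} as standard; Hopf--Rinow gives existence of minimizers for each fixed $\omega$, but the framework needs a jointly measurable selection $(x,y,\omega)\mapsto\gamma^*(x,y,\omega)$, which the paper establishes via an abstract Kuratowski--Ryll-Nardzewski argument (\Cref{prop:measurableSelectionFPP}, \Cref{lem:measurableSelectionLem}); this should at least be addressed, though it is a lesser gap than the union-bound issue.
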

\begin{remark}\label{rem:addtion-to_riemannian}  Under the assumptions of   \Cref{th:main_riemannian},
    \begin{enumerate}
    \item\label{positiveShapeRiemannianFPP} condition \eqref{cond:Lambda-cone} holds due to \ref{uniformPositiveDefinite};
     \item\label{RiemannianFPPDerivative} we can derive an expression for \eqref{eq:H-diff-of Lambda} in this example: for all $v\in \R^d$ and $w\in v+H$, setting $\gamma^T(v) := \gamma(0,Tv)$ the selection of minimizer in \ref{cond:cone}, we have
    \begin{equation}\label{eq:RiemannianFPPDerivative}
        \langle \nabla \Lambda(v),w\rangle = \lim_{T\to \infty}\frac{1}{T}\int_0^t\Big[\frac{1}{2} \sum_{i=1}^d w_i h^i_{\gamma_s^T(v)}(\dot{\gamma}_s^T(v)) + \frac{\langle v,\dot{\gamma}_s^T(v)\rangle}{|v|^2} g_{\gamma_s^T(v)}(w,\dot{\gamma}_s^T(v))\Big]ds.
    \end{equation}

    \end{enumerate}
\end{remark}

We will prove Theorem~\ref{th:main_riemannian} in Section~\ref{sec:proof-of-main-riemannian}. Part~\ref{positiveShapeRiemannianFPP} of 
\Cref{rem:addtion-to_riemannian} 
is a direct consequence of~\ref{uniformPositiveDefinite}. We will justify  part 
\ref{RiemannianFPPDerivative} in Section~\ref{sec:secondDerivRiemannianFPP}.

In \Cref{sec:examples-Riemannian} we present two examples of random Riemannian metrics that satisfy the conditions of this section.
\subsection{Examples of random Riemannnian metrics}
\label{sec:examples-Riemannian}

Let us give two examples of random Riemannian metrics satisfying the above requirements. 
Let $K\subset\R^d$ be a compact set and let $\Qq$ be a probability measure on the space 
\[
C_{K}(\R^d;\mathcal{M}^d_+)= \bigl\{f\in C_{loc}(\R^d;\mathcal{M}^d_+): \textrm{supp}(f)\subset K\bigr\}.
\] 
We let $\mathbf{N}$ be a Poisson measure on $\R^d\times C_{K}(\R^d;\mathcal{M}^d_+)$ with intensity measure given by $\Leb \otimes \Qq$. 
Here $\Leb$ is the Lebesgue measure on $\R^d$. 
In other words, this is a marked Poisson process with unit intensity on $\R^d$ and  i.i.d.\ marks 
distributed in $C_{K}(\R^d;\mathcal{M}^d_+)$ according to $\Qq$.

It is convenient to work with the canonical space $(\Omega,\Fc,\Prb)$ of locally finite Poisson point configurations on $\R^d\times C_{K}(\R^d;\mathcal{M}^d_+)$  equipped with topology of vague convergence. The role of $\omega$ of Section~\ref{sec:RiemannianFPPeg} is played by $\mathbf{N}$. 

For $y\in\R^d$, the translation $\theta^y$ on $\R^d$
also gives rise to a $\Prb$-preserving transformation $\theta^y_*$ of $\Omega$: 
each Poissonian point $(x_i,\varphi_i)$ is mapped into a Poisson point
$(\theta^y x_i,\varphi_i)$
of the Poisson Point Process~$\theta^y_*\mathbf{N}$. Here the transformation $\theta^y$ applies only to the base point $x_i$ in $\R^d$ but not to the mark $\varphi_i$. Equivalently, for all continuous functions $f$ with bounded support: 
\begin{equation}
    \int f(x,\varphi)(\theta_*^y\mathbf{N})(dx,d\varphi) = \int f(\theta^y x,\varphi)\mathbf{N}(dx,d\varphi).    
\end{equation}

\begin{example}\label{eg:FPPexample1}
    We can let
    \begin{equation}\label{eq:phiSumRepresentation}
        g_x = \int \varphi(x - y) \mathbf{N}(dy,d\varphi)+ \lambda I =\sum_{(x_i,\varphi_i)} \varphi_i(x-x_i)+\lambda I,
    \end{equation}
    where the summation extends over all Poissonian points $(x_i,\varphi_i)$.
    We must also assume that for some $\beta>4d$ 
    \begin{equation}\label{eq:varphiMoments}
        \Qq \|\varphi\|_{C^2}^\beta  < \infty.
    \end{equation}
\end{example}

\begin{example}\label{eg:FPPexample2}
    We can also consider a product version of \Cref{eg:FPPexample1}. Specifically, we can let 
    \begin{equation}\label{eq:phiProduct}
        g_x = \exp\Big(\int \varphi(x-y)\mathbf{N}(dy,d\varphi)\Big).
    \end{equation}
    We additionally require that there is some $C>0$ such that $\|\varphi\|_{C^2} \le C$ with probability one. 
\end{example}

\begin{theorem}\label{th:conditions-hold-for-Riemannian-models} 
    \Cref{eg:FPPexample1,eg:FPPexample2} satisfy our general conditions \ref{stationaryPhiCondition}---\ref{uniformPositiveDefinite}.
\end{theorem}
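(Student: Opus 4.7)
The strategy is to verify conditions \ref{stationaryPhiCondition}--\ref{uniformPositiveDefinite} for both examples in parallel, since the constructions coincide up to the exponential wrapper in Example~\ref{eg:FPPexample2}. For \ref{stationaryPhiCondition}, I use the standard spatial action on marked Poisson configurations: $\theta_*^y$ shifts only the base points while keeping the marks fixed, which is $\Prb$-preserving and ergodic by classical Poisson theory. The covariance relation $g_{\theta^y x, \theta_*^y \omega} = g_{x,\omega}$ is an immediate change of variables in both \eqref{eq:phiSumRepresentation} and \eqref{eq:phiProduct}, since each integrand depends on $x$ and $x_i$ only through the difference $x - x_i$.

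For \ref{phiwDiffCondition}, I define $\Xi^*_{v\to w}$ as the pushforward of the marked configuration under $(x_i,\varphi_i)\mapsto (\Xi_{v\to w}(x_i),\varphi_i)$, which is $\Prb$-preserving by the volume-preserving property from \Cref{lem:XiProperties}. A direct change of variables using linearity of $\Xi_{v\to w}$ gives
\[
g_x^{w,v} = \sum_{i} \varphi_i\bigl(\Xi_{v\to w}(x - x_i)\bigr) + \lambda I
\]
for Example~\ref{eg:FPPexample1} and the analogous exponential formula for Example~\ref{eg:FPPexample2}. Choosing $\delta\in(0,1)$ small enough that $\Xi_{v\to w}$ and its inverse have operator norm at most~$2$ for $|w-v|<\delta$, the support condition $\mathrm{supp}(\varphi)\subset K$ guarantees that for each $x$ only points $x_i$ in a fixed-radius ball $\Ball(x,R_0)$ contribute. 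Differentiating in $w$ via the chain rule, using $\partial_{w_i}\Xi_{v\to w}(y)=|v|^{-2}\langle v,y\rangle e_i$ and $\partial^2_{w_j w_i}\Xi_{v\to w}(y)=0$, the norms $\|g_x^{w,v}\|$ and its first two $w$-derivatives are bounded by a constant multiple of $\sum_{i:\,|x-x_i|\le R_0}\|\varphi_i\|_{C^2}$ in Example~\ref{eg:FPPexample1}, and by a similar sum multiplied by an exponential factor in Example~\ref{eg:FPPexample2} coming from Duhamel's formula for the matrix exponential.

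I take $\YField(x)$ equal to that bound, so \ref{gBound} holds by construction. Stationarity \ref{Ystationary} is automatic since $\YField$ is a translation-covariant functional of $\mathbf{N}$; finite range dependence \ref{YFiniteRange} with range $2R_0$ follows from independent scattering of a Poisson measure on disjoint sets. For the moment bound \ref{YMomentCondition} in Example~\ref{eg:FPPexample1}, $\sup_{x\in[0,1]^d}\YField(x)$ is controlled by $\sum_j \|\varphi_j\|_{C^2}$ over Poisson points in a fixed bounded region, and the $\beta$-moment follows from H\"older combined with \eqref{eq:varphiMoments} and the fact that the number of Poisson points in a bounded region has all moments. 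Finally, \ref{uniformPositiveDefinite} follows in Example~\ref{eg:FPPexample1} directly from the $+\lambda I$ term, and in Example~\ref{eg:FPPexample2} from the fact that a sum of $\mathcal{M}^d_+$-valued matrices is positive semi-definite, so its exponential satisfies $g_x \succeq I$.

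The main obstacle will be the moment bound in Example~\ref{eg:FPPexample2}, where the exponential factor in $\YField$ forces one to estimate a quantity of the form $\E\bigl[\exp(c\cdot\#\{\text{Poisson points in }\Ball(0,R_0+\sqrt d)\})\bigr]$; this is finite for every $c$ by the moment generating function of a Poisson random variable, but requires the deterministic bound $\|\varphi\|_{C^2}\le C$ imposed in \Cref{eg:FPPexample2} to rule out additional randomness that could spoil the finiteness. Once this is in hand, all conditions are verified and \Cref{th:main_riemannian} applies to both examples.
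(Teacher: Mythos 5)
Your proposal is correct and follows essentially the same route as the paper: the same shift action for \ref{stationaryPhiCondition}, the pushforward $\Xi^*_{v\to w}$ justified by \Cref{lem:XiProperties}, a dominating field $\YField$ built from the kernels' $C^2$-norms summed over nearby Poisson points (the paper merely replaces your indicator cutoff by a smooth bump $\eta$ for convenience), the Duhamel/matrix-exponential bound for \Cref{eg:FPPexample2}, and the same positive-definiteness observations for \ref{uniformPositiveDefinite}. The only cosmetic difference is the moment estimate, where you use H\"older plus moments of the Poisson count while the paper invokes the Marcinkiewicz--Zygmund inequality; both are valid.
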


We will prove this theorem in \Cref{sec:appendix}. A crucial step is to define useful transformations  $\Xi_{v\to w}^*$, $w\in H$, on $\Omega$ preserving the distribution of the Poisson measure.
That can be done via the following identity for all continuous functions $f$
with bounded support: 
\begin{equation}
    \int f(x,\varphi)(\Xi_{v\to w}^*\mathbf{N})(dx,d\varphi) = \int f(\Xi_{v\to w}x,\varphi)\mathbf{N}(dx,d\varphi).    
\end{equation}
In other words, a marked Poissonian point $(x_i,\varphi_i)$ of the Poisson Point Process~$\mathbf{N}$ gives rise to
a point $(\Xi_{v\to w} x_i,\varphi_i)$ of  $\Xi_{v\to w}^*\mathbf{N}$, where the transformation $\Xi_{v\to w}$ applies only to the base point $x_i$ in $\R^d$ but not to the random kernel $\varphi_i$. Due to Lemma~\ref{lem:XiProperties}, these transformations preserve $\Prb$, the distribution of the Poisson measure. 
 
 \section{Example II: Broken line Poisson FPP.}
 \label{sec:exampleII}
 
 The goal of this section is to introduce another 
 family of random metrics on~$\R^d$ and state our main results for these models. In 
 Section~\ref{sec:proofs_broken-lines}, we 
 show that they follow from our general approach 
 by 
 checking the conditions of Section~\ref{sec:general-results}.

 We will work with a homogeneous Poisson point process of 
 constant intensity $1$ on $\R^d$. 
 Similarly to Section~\ref{sec:examples-Riemannian}, it is convenient to work with the canonical space $(\Omega,\Fc,\Prb)$ of locally finite Poisson point configurations on $\R^d$ equipped with topology of vague convergence. We usually denote elements of $\Omega$ by $\omega$. They can be viewed either 
 as locally finite point configurations or
 $\sigma$-finite Borel
 measure with values in $\N\cup\{0\}$ on bounded Borel sets.  
 For $x\in\R^d$ and $\omega\in\Omega$, we will write $x\in \omega$ if and only if $\omega(\{x\})=1$. The space $\Omega$ is equipped with the group of $\Prb$-preserving transformations 
 $\theta^x_*:\Omega\to\Omega$, $x\in\R^d$. Namely, for $\omega\in\Omega,\ x\in\R^d$, $\theta^x_*\omega$ is defined as the pushforward of the measure $\omega$ under the transformation $\theta^x$  defined in~\eqref{eq:theta}.

 For every $x\in\R^d$, we introduce a random variable
 \[
 F(x)=F_{\omega}(x)=1- \omega(\{x\})=\begin{cases}
     1,& x\notin \omega,\\
     0,& x\in \omega.
 \end{cases}
 \]
 It will serve as a cost for a path to contain $x$. In other words, no cost will be
 associated with Poissonian points, while all the other points (we will call them {\it penalty points})
 add cost~$1$ to a path.  
 
 We will also need a cost function, or Lagrangian, $L$. Throughout this section, we will assume that $L$ satisfies the following conditions:
 \begin{enumerate}[label=(D\arabic*), ref=\rm{(D\arabic*)}] 
     \item\label{asm:L-convex}
  $L:\R^d\to[0,\infty)$ is a convex function such that 
 $L(x)=L(-x)$ for all $x\in\R^d$ and 
 $L(x)=0$ if and only if $x=0$. 
 \item \label{eq:L-C2} $L\in C^2(\R^d)$. 
 \item \label{asm:lower-L-at-0} There is 
 $c>0$ such that
 %\begin{equation*}
     %\label{eq:lower-L-at-0}
     $L(x)\ge c|x|^2,\quad |x|\le 1.$
 %\end{equation*}
 \item \label{asm:superlinear_L}
 %\begin{equation*}
     %\label{eq:superlinear_L}
     $\lim_{x\to\infty}\frac{L(x)}{|x|}=+\infty.$
 %\end{equation*}
 \end{enumerate}
 
 We first introduce random action for discrete paths, i.e., 
 sequences of points   
 $\gamma=(\gamma_0,\ldots,\gamma_n)$ in $\R^d$.  Points $\gamma_k\in\R^d$, $k=0,1,\ldots,n$, are called vertices of $\gamma$.
 
 For $x,y\in\R^d$ and $n\in\N$, we define
 \begin{align}
     \notag
 \Paths_{x,y,n}&=\Big\{\gamma:\{0,1,\ldots,n\}\to \R^d :\  \gamma_0=x,\ \gamma_n=y\Big\},
 \\ 
 \label{eq:paths_stars}
 \Paths_{x,y,*}&=\bigcup_{n\in\N} \Paths_{x,y,n},\\
 \notag
 \Paths_{*,*,n}&=\bigcup_{x,y\in\R^d} \Paths_{x,y,n}, \quad \mathrm{etc.}
 \end{align}
 The set $\Paths_{*,*,n}$ can be identified with $\R^{(n+1)d}$ and equipped with the Euclidean topology. The set $\Paths=\Paths_{*,*,*}=\cup_{n}\Paths_{*,*,n}$ is equipped with the disjoint union topology. We can embed $\Paths_{x,y,n}$ into $\SP_{x,y,n}$ by considering the piecewise linear interpolations of paths in $\Paths_{x,y,n}$, see \Cref{sec:proofs_broken-lines} for details.

 For a path $\gamma\in \Paths_{*,*,n}$, we  define its action by
 \begin{align}
     \label{eq:Poisson-action}
     A_{\omega}(\gamma)&=\sum_{i=0}^{n-1} L(\Delta_i \gamma)
     +\frac{1}{2}\sum_{i=0}^{n-1} (F_{\omega}(\gamma_i)+F_{\omega}(\gamma_{i+1}))
     \\
     \notag
     &=\sum_{i=0}^{n-1} L(\Delta_i \gamma)
     +\frac{1}{2}F_\omega(\gamma_0)+\sum_{i=1}^{n-1} F_{\omega}(\gamma_i)+ \frac{1}{2} F_{\omega}(\gamma_{n}),
 \end{align}
 where $\Delta_i \gamma =\gamma_{i+1}-\gamma_i$. 
 
 Our results apply to various similar definitions, but we choose this one because it is additive under concatenation of paths and invariant under path reversal, see the proof of Lemma~\ref{lem:metric} below.

 For distinct $x,y\in\R^d$ and every $\omega\in\Omega$, we can define 
 \begin{equation}
     \label{eq:def_random_metric1}
 \Amin_\omega(x,y)=\inf_{\gamma\in\Paths_{x,y,*}} A_\omega(\gamma).
 %=\inf_{\gamma\in\Paths'_{x,y,\star}}A_\omega(\gamma),
 \end{equation}

 We also set $\Amin_\omega(x,x)=0$ for all $x\in\R^d$. This is compatible 
 with definition \eqref{eq:Poisson-action} with $n=0$ and $\gamma=(x)\in \Paths_{x,x,0}$.
 If $\gamma\in \Paths_{x,y,*}$ satisfies
 $\Amin_\omega(x,y)=A_\omega(\gamma),$   
 then we call $\gamma$ a geodesic between $x$ and~$y$ under $\omega$.

 \begin{lemma}\label{lem:metric}
     For all $\omega\in\Omega$, $\Amin_\omega$ is a finite metric on $\R^d$. 
 \end{lemma}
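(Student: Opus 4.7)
The plan is to verify each of the metric axioms separately, together with finiteness. Three of the four (plus finiteness) are almost immediate from the structure of $A_\omega$ in \eqref{eq:Poisson-action} and the conditions \ref{asm:L-convex}--\ref{asm:superlinear_L}; the substantive content is the strict positivity $\Amin_\omega(x,y)>0$ for $x\ne y$, which I expect to be the main obstacle.

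For the easy parts, non-negativity follows from $L\ge 0$ (by \ref{asm:L-convex}) and $F_\omega\in\{0,1\}$, while $\Amin_\omega(x,x)=0$ comes from the $0$-step path $(x)\in\Paths_{x,x,0}$. Finiteness is provided by the $1$-step path $(x,y)\in\Paths_{x,y,1}$, whose action equals $L(y-x)+\tfrac{1}{2}(F_\omega(x)+F_\omega(y))\le L(y-x)+1<\infty$ by continuity of $L$ (\ref{eq:L-C2}). Symmetry rests on $L(-u)=L(u)$ from \ref{asm:L-convex}: reversing $\gamma=(\gamma_0,\dots,\gamma_n)$ to $\bar\gamma=(\gamma_n,\dots,\gamma_0)$ sends $\Delta_i\bar\gamma=-\Delta_{n-1-i}\gamma$, preserving each $L$-value, while the vertex-weighted $F$-sum in \eqref{eq:Poisson-action}, with its symmetric weights $\tfrac{1}{2},1,\dots,1,\tfrac{1}{2}$, is manifestly reversal-invariant. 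The triangle inequality follows from the \emph{exact additivity} of $A_\omega$ under concatenation: for $\gamma\in\Paths_{x,y,*}$ and $\delta\in\Paths_{y,z,*}$, the two half-weights $\tfrac{1}{2}F_\omega(y)$ from $A_\omega(\gamma)$ and $A_\omega(\delta)$ combine into the full interior weight at $y$ in $A_\omega(\gamma*\delta)$, giving $A_\omega(\gamma*\delta)=A_\omega(\gamma)+A_\omega(\delta)$, and hence $\Amin_\omega(x,z)\le\Amin_\omega(x,y)+\Amin_\omega(y,z)$ after infimizing.

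The remaining task is strict positivity. Fix $x\ne y$ and split by the $F$-contribution. If either endpoint lies outside $\omega$, the endpoint half-weight alone forces $A_\omega(\gamma)\ge\tfrac{1}{2}$ for every $\gamma$; if $x,y\in\omega$ but some intermediate $\gamma_i$ is non-Poisson, the full interior weight $F_\omega(\gamma_i)=1$ forces $A_\omega(\gamma)\ge 1$. This reduces the question to bounding $A_\omega(\gamma)=\sum_i L(\Delta_i\gamma)$ from below over paths whose vertices all lie in the discrete set $\omega$.

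For such Poisson-vertex paths I would use two ingredients. First, local finiteness of $\omega$ yields, for every $R>0$, a strictly positive minimum interpoint distance $\rho_R:=\min\{|u-u'|:u,u'\in\omega\cap\Ball(0,R),\,u\ne u'\}$. Second, combining \ref{asm:lower-L-at-0} with the elementary consequence of \ref{asm:L-convex} that $L(u)/|u|$ is nondecreasing in $|u|$ along rays---so $L(u)\ge c_L|u|$ for $|u|\ge 1$ with $c_L=\inf_{|u|=1}L(u)>0$---gives the global bound $L(u)\ge \bar c\min(|u|,|u|^2)$. For paths confined to a fixed ball $\Ball(0,R)$ containing $x$ and $y$, every nonzero increment then satisfies $L(\Delta_i\gamma)\ge \bar c\min(\rho_R,\rho_R^2)>0$, and at least one increment is nonzero because $x\ne y$; this bounds the action below by a positive constant. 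The genuinely delicate step---and the main obstacle---is to rule out arbitrarily long excursions: here one combines the superlinear growth \ref{asm:superlinear_L} with the Poisson local-density bound (any tube of radius $\epsilon$ around a segment of length $\ell$ contains only $O(\epsilon^{d-1}\ell)$ Poisson points available as vertices). A Cauchy--Schwarz estimate $\sum_i L(\Delta_i\gamma)\ge \bar c\sum_i|\Delta_i\gamma|^2\ge \bar c(\sum_i|\Delta_i\gamma|)^2/n$ for the small-step part, together with the linear bound above for the large-step part, shows that long excursions force either many small steps confined to a tube that must be thick enough to contain them (which makes $\sum|\Delta_i|^2$ bounded below through the density constraint) or individual long edges (each costing at least $c_L$). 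Putting the cases together yields $\Amin_\omega(x,y)\ge\min\{\tfrac{1}{2},c(x,y,\omega)\}>0$, completing the verification that $\Amin_\omega$ is a metric.
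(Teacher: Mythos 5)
Your routine verifications (finiteness via the one-step path, nonnegativity, symmetry via path reversal, exact additivity under concatenation giving the triangle inequality, and the reduction of positivity to the case where every vertex of the path is a Poissonian point, since any path touching a penalty point has action at least $\tfrac{1}{2}$) are all correct and essentially as in the paper. The gap is in the remaining case, which you yourself flag as ``the main obstacle'' and do not resolve. Your plan there is to bound the action below by a minimal interpoint distance $\rho_R$ of $\omega$ inside a ball, and then to rule out long excursions by combining \ref{asm:superlinear_L} with a ``Poisson local-density bound'' for tubes and a Cauchy--Schwarz estimate. This does not work as stated: the lemma is asserted for \emph{every} $\omega\in\Omega$, where $\Omega$ is the canonical space of all locally finite point configurations, so no probabilistic density estimate for Poisson points is available (and even almost surely the $O(\epsilon^{d-1}\ell)$ tube count is not a deterministic, uniform-in-the-path bound you can simply invoke for a fixed $\omega$). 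Moreover the Cauchy--Schwarz step $\sum_i L(\Delta_i\gamma)\ge \bar c\,(\sum_i|\Delta_i\gamma|)^2/n$ has the unbounded number of steps $n$ in the denominator, and you never establish the control of $n$ by available Poisson points that your sketch presupposes. As written, the positivity argument is incomplete.

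More importantly, the difficulty you are fighting is spurious, and this is where the paper's proof is much simpler: there is no need to confine the path or to control its length at all. Suppose $x\ne y$ and every vertex of $\gamma\in\Paths_{x,y,n}$ lies in $\omega$. Let $i$ be the first index with $\gamma_{i+1}\ne x$ (it exists since $\gamma_n=y\ne x$); then $\gamma_i=x$ and $\gamma_{i+1}\in\omega\setminus\{x\}$, so $|\Delta_i\gamma|\ge \Delta_\omega(x)>0$, where $\Delta_\omega(x)$ is the Euclidean distance from $x$ to the nearest Poissonian point distinct from $x$, positive by local finiteness of $\omega$. By \ref{asm:lower-L-at-0} when $|\Delta_i\gamma|\le 1$, and by convexity of $L$ with $L(0)=0$ (so $L(u)\ge |u|\inf_{|z|=1}L(z)$ for $|u|\ge 1$, the infimum being positive by \ref{asm:L-convex}) otherwise, this single step already gives $A_\omega(\gamma)\ge L(\Delta_i\gamma)\ge c\,(\Delta_\omega(x)^2\wedge 1)$, uniformly over all such paths regardless of how far they wander. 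Combined with your case analysis for paths containing a penalty point, this yields $\Amin_\omega(x,y)\ge \tfrac{1}{2}\wedge c\,(\Delta_\omega(x)^2\wedge 1)>0$, which is exactly the paper's one-line argument; replacing your excursion/tube discussion by this local first-step bound closes the gap.
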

 \begin{proof}
 For any two points $x,y\in\R^d$, let $\gamma$ be a path in $\Paths_{x,y,*}$ satisfying $|\gamma_{i+1} - \gamma_i| \le 1$ and having at most $\lceil|x-y|\rceil+1$ steps, and so
 \begin{equation}
     \label{eq:bound_on_rho_1}
     \Amin_\omega(x,y)\le A(\gamma) \le (\lceil |x-y|\rceil+1)(\sup_{|x|\le 1}L(x) + 1)<\infty.
 \end{equation}
 The symmetry of $\Amin_\omega$ follows from the invariance under path reversal:  for all $\omega$, $n$, and $\gamma\in\Paths_{*,*,n}$,
 we have 
     $A_\omega(\gamma_n,\gamma_{n-1},\ldots,\gamma_1,\gamma_0)=A_\omega(\gamma)$.
 
 To prove the triangle inequality, we introduce path concatenation: for any point $x\in\R^d$, if 
 $\gamma=(\gamma_0,\gamma_{1},\ldots,\gamma_{n-1},\gamma_n)\in \Paths_{*,x,*}$, and
 $\psi=(\psi_0,\psi_{1},\ldots,\psi_{m-1},\psi_m) \in \Paths_{x,*,*}$ for some $x\in\R^d$
 we define
 \[\gamma \psi=
 (\gamma_0,\gamma_1,\ldots,
 \gamma_{n-1},x,\psi_1,\ldots,
 \psi_{m-1},\psi_{m}).
 \] 
 It follows that if concatenation of paths $\gamma$ and
 $\psi$ is well-defined, then, for all $\omega$,
 \begin{equation}
     A_\omega(\gamma\psi)=A_\omega(\gamma)+A_\omega(\psi),
     \label{eq:concatenation}
 \end{equation}
 a property similar to \ref{cond:subbadd} for continuous paths, implying the triangle inequality.
 
 The relation $\Amin_\omega(x,y)>0$ for distinct $x,y\in\R$ is also easy to see. Namely, paths containing at least one penalty point have action at least $1/2$, and a path $\gamma$ containing no penalty points, contains only Poisson points, so its action is bounded
 below, due to \ref{asm:lower-L-at-0}, by $c(\Delta_\omega^2(x)\wedge 1)$, where 
 $\Delta_\omega(x)$ is the Euclidean distance from~$x$ to the closest Poissonian point distinct from $x$.
\end{proof}

 \begin{theorem}\label{thm:main-on-broken-lines} Under assumptions \ref{asm:L-convex}--\ref{asm:superlinear_L}, all theorems of Section~\ref{sec:general-results} hold.
 \end{theorem}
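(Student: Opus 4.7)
The plan is to verify the conditions \ref{cond:subbadd}--\ref{cond:for-finiteness} and \ref{cond:setup_diff}--\ref{cond:M_infty} of Section~\ref{sec:general-results}, after which Theorems~\ref{thm:shapeFunction}--\ref{thm:limit-shape-diff} apply directly to this model. To fit the continuous-path framework, I embed each discrete $\gamma=(\gamma_0,\dots,\gamma_n)\in\Paths_{x,y,n}$ into $\SP_{x,y,n}$ as its piecewise-linear interpolation, and extend $A_\omega$ to $\SP$ by setting it to $+\infty$ on paths not of this form. Condition \ref{cond:subbadd} is then immediate from the exact additivity \eqref{eq:concatenation}; \ref{cond:skew-invariance} follows since $L$ depends only on increments and $F_{\theta^x_*\omega}(\theta^x\gamma_i)=F_\omega(\gamma_i)$; \ref{cond:measurability} follows by expressing $\Amin$ as an infimum over paths with rational vertices. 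With $\Cone=\R^d$, the deterministic upper bound \eqref{eq:bound_on_rho_1} yields both \eqref{eq:expected_action_finite} and \ref{cond:for-finiteness} at once.

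For the remaining part of \ref{cond:cone}, existence and measurable selection of $\gamma_\omega(x)$ is handled as follows. By \ref{asm:superlinear_L} there is a universal constant $R_0>0$ such that any step of length $\ge R_0$ is strictly more expensive than its replacement by $\lceil|\Delta|\rceil$ steps of length $\le 1$ joining the same endpoints, so the infimum may be restricted to paths with $|\Delta_i|\le R_0$; vertices with $\Delta_i=0$ can be deleted, leaving at most finitely many distinct vertices for any minimizing sequence of bounded action. Finite-dimensional compactness together with lower semicontinuity of $A_\omega$ (continuity of $L$ plus lower semicontinuity of $F_\omega$, which equals the indicator of the complement of the Poisson cloud) yields a minimizer; a measurable selection is standard via a lexicographic tie-breaking rule.

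The heart of the proof is \ref{cond:M_infty}. Fix $v\ne 0$, take $H=v^\perp$, $\Xi_{v\to w}$ as in \eqref{eq:XiFPP}, and let $\Xi^*_{v\to w}\omega$ be the pushforward of the Poisson configuration $\omega$ under the volume-preserving map $\Xi_{v\to w}$; this preserves $\Prb$ by Lemma~\ref{lem:XiProperties}. The key observation is that
\[
B(w,v,\gamma)=\sum_{i=0}^{n-1}L(\Xi_{v\to w}\Delta_i\gamma)+\tfrac12 F_\omega(\gamma_0)+\sum_{i=1}^{n-1}F_\omega(\gamma_i)+\tfrac12 F_\omega(\gamma_n),
\]
in which the $F$-terms are \emph{independent of $w$}, because $\Xi_{v\to w}\gamma_i$ is an atom of $\Xi^*_{v\to w}\omega$ if and only if $\gamma_i$ is one of $\omega$. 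A direct chain-rule calculation yields
\[
\nabla^2_w L(\Xi_{v\to w}\Delta_i\gamma)=\biggl(\frac{\langle v,\Delta_i\gamma\rangle}{|v|^2}\biggr)^{\!2}\nabla^2 L(\Xi_{v\to w}\Delta_i\gamma).
\]
To close the estimate I need two deterministic properties of the minimizer $\gamma^T(v)$: (a) $|\Delta_i\gamma^T(v)|\le R_0$ for all $i$, by the same replacement argument as above, and (b) $\sum_i|\Delta_i\gamma^T(v)|^2\le CT$, obtained from $\sum_i L(\Delta_i\gamma^T(v))\le\Amin(0,Tv)\le C_1 T$ by using the quadratic lower bound \ref{asm:lower-L-at-0} in the regime $|\Delta_i|\le 1$ and a linear lower bound $L(\Delta)\ge c|\Delta|$ (valid for $1<|\Delta|\le R_0$ by compactness and positivity of $L$ away from $0$) combined with $|\Delta_i|^2\le R_0|\Delta_i|$ in the complementary regime. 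Since $L\in C^2$, $\nabla^2 L$ is bounded on any compact set containing $\Xi_{v\to w}\Delta_i\gamma^T(v)$ for $|\Delta_i|\le R_0$ and $w\in H(\delta)$, which yields $M_\infty\le C_2/|v|^2$ uniformly in $T$. The main obstacle is estimate~(b): in contrast to the Riemannian setting, where uniform ellipticity \ref{uniformPositiveDefinite} delivers the quadratic control at once, here the quadratic lower bound on $L$ is available only near the origin, and the bookkeeping between small and moderate step sizes is precisely what ties together \ref{asm:L-convex}--\ref{asm:superlinear_L}.
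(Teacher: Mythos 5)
Your verification of \ref{cond:subbadd}, \ref{cond:skew-invariance}, \ref{cond:for-finiteness}, and, most importantly, of \ref{cond:setup_diff}--\ref{cond:M_infty} matches the paper's argument: same shear $\Xi_{v\to w}$ and pushforward $\Xi^*_{v\to w}$, same observation that the $F$-terms do not depend on $w$, same chain-rule identity for $\nabla^2_w L(\Xi_{v\to w}\Delta_i\gamma)$, bounded steps for minimizers, and the bound $\sum_i|\Delta_i\gamma^T(v)|^2\le C\,A_\omega(\gamma^T(v))\le C'T$ obtained by extending the quadratic lower bound \ref{asm:lower-L-at-0} to steps of size up to $R_0$ (the paper does exactly this with $c(r)$). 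However, there are two genuine gaps in your treatment of \ref{cond:measurability} and of the existence part of \ref{cond:cone}. First, $\Amin$ is \emph{not} the infimum over paths with rational interior vertices: optimal paths exploit Poissonian points precisely to avoid the unit penalty, and perturbing a Poissonian vertex to a nearby rational point raises the action by essentially $1$ while changing the $L$-terms only slightly, so the rational-vertex infimum can be strictly larger than $\Amin_\omega(x,y)$. The paper instead restricts to the class $\NicePaths^r$ of paths whose interior turning points are Poissonian points (Lemma~\ref{lem:nice-paths}) and obtains measurability through the minimizer selection of Lemma~\ref{lem:existence-minimizers}.

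Second, your compactness argument for existence of minimizers is incomplete. Boundedness of the action only bounds the number of \emph{non-Poissonian} vertices; the number of Poissonian vertices is finite only once you know the near-minimizing paths stay in a bounded region, and nothing deterministic forces that: since $L(\Delta)\sim c|\Delta|^2$ for small steps, a path hopping along a hypothetical chain of closely spaced Poissonian points could travel arbitrarily far at bounded cost (gaps of order $i^{-0.6}$ give infinite length with summable cost), so confinement of minimizing sequences is a genuinely probabilistic statement, not a consequence of ``finite-dimensional compactness.'' The paper supplies exactly this input: the cube-crossing lower bound by a stationary finite-range field (Lemma~\ref{lem:lower-bound-in-R-cube}) combined with the greedy-lattice-animal estimate of LaGatta--Wehr (Lemma~\ref{lem:LaGatta-Wehr}) yields almost sure linear growth of the action in the displacement (Lemma~\ref{lem:action_at_least_linear}), which confines near-minimizers to a ball containing only finitely many admissible nice paths and thereby gives both existence and measurable selection (Lemma~\ref{lem:existence-minimizers}); the same lemma is also what later gives $\Lambda(v)>0$. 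Without an argument of this type your verification of \ref{cond:cone} (and hence of the hypotheses feeding Theorems~\ref{thm:shapeFunction}--\ref{thm:limit-shape-diff}) does not close; the rest of your proposal would then go through as in the paper.
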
    
 \begin{remark}\label{rem:brokenLineRemark} Under the assumptions of Theorem~\ref{thm:main-on-broken-lines}:
    \begin{enumerate}
        \item\label{item:shapeFcnPositiveBrokenLine} The shape function positivity condition~\eqref{cond:Lambda-cone} holds. Thus, according to 
        Theorem~\ref{thm:limit-shape-diff}, the boundary of the limit shape is diffeomorphic to a sphere.
        \item\label{item:BrokenLineDerivative} For all $v\in \R^d$ and $w\in v+H$, setting $\gamma^T(v) := \gamma(0,Tv)$ to be the selection of minimizer in \ref{cond:cone}, we have
        \begin{equation}\label{eq:BrokenLineDerivative}
            \langle \nabla \Lambda(v),w\rangle = \lim_{T\to \infty}\frac{1}{T|v|^2}\sum_{i=0}^{n-1} \langle \nabla L(\Delta_i \gamma^T(v)),w\rangle\langle v,\Delta_i \gamma^T(v)\rangle.
        \end{equation}
    \end{enumerate}
 \end{remark}
 We prove \Cref{thm:main-on-broken-lines} in Section~\ref{sec:proofs_broken-lines}, where we interpret our model in terms of continuous
 paths from~$\SP$, check the conditions 
 of Section~\ref{sec:general-results}, and apply 
 Theorems~\ref{thm:shapeFunction}---\ref{thm:limit-shape-diff}. Part \ref{item:shapeFcnPositiveBrokenLine} of \Cref{rem:brokenLineRemark} will follow from \Cref{lem:action_at_least_linear}, and part \ref{item:BrokenLineDerivative} will follow from computations in \Cref{sec:proofs_broken-lines}.

 \section{The Directed Setting}\label{sec:directed}
  In Sections \ref{sec:exampleI} and \ref{sec:exampleII}, we discussed examples of FPP type involving no restrictions on admissible path directions.
  The goal of this section is to explain how the directed 
  setting of \cite{bakhtin2023differentiability}, where the time coordinate plays a distinguished role, also fits the general framework of the present paper, although we used slightly different notations and definitions in that paper. 
  
Let $\SP^\uparrow$ denote the subset of $\SP$ given by paths $\gamma$ in $\R^d$ satisfying $\langle \dot{\gamma}, e_1\rangle \equiv 1.$ In this directed setting, the first coordinate is interpreted as time. For $x\in \R^d$, we let~$x^\uparrow$ be the vector composed of the remaining $d-1$ coordinates of~$x$. For $\gamma\in \SP^\uparrow$, the path $\gamma^\uparrow$ is defined by $(\gamma^\uparrow)_s=\gamma^\uparrow_s$. In \cite{bakhtin2023differentiability}, the action was given by
\begin{equation}\notag
    A_\omega(\gamma) = \int_{0}^t L(\dot{\gamma}_s^\uparrow)ds + \int_{0}^t F_\omega(\gamma_s)ds, \quad\gamma\in \SP^\uparrow \cap \SP_{*,*,t},
\end{equation}
for $L:\R^{d-1}\to \R$ a convex function and $F$ is a random twice differentiable function, both satisfying certain assumptions. 

To apply the general set-up of \Cref{sec:generalSetUp}, we can set the action to be infinite on all other paths and define the cone $\Cone$ of condition \ref{cond:cone} to be $\{x\in \R^d\,:\,\langle x,e_1\rangle > 0\}$. To ensure condition \ref{cond:setup_diff}, we let $H = \{(0,x)\,:\,x\in \R^d\}$ and,  for $v,w\in\Cone$, we define
\[\Xi_{v\to w} x = x + (w-v)\frac{\langle x,e_1\rangle}{\langle v,e_1\rangle},\quad x\in\R^d.\]
 Note that this shear transformation satisfies $\Xi_{v\to w} T v = Tw$ for all $T$.
It can be lifted to a map on $\SP$ by setting $(\Xi_{v\to w}\gamma)_s = \Xi_{v\to w}\gamma_s$.
If  $w\in v+H$  (i.e., the time coordinates of $v$ and~$w$ coincide), the map $\Xi_{v\to w}^*$ on $\Omega$ is chosen as a measure preserving transformation such that the derivatives of $F_{\Xi_{v\to w}^*\omega }(\Xi_{v\to w}x)$ with respect to $w$ 
allow for a bound in terms of a stationary, finite dependence range stochastic process with sufficiently high moments. This implies the bound on $B_\omega(w,v,\gamma)$ required in~\ref{cond:M_infty} since

 in this setting 
\begin{equation}\notag
    B_\omega(w,v,\gamma) = A_{\Xi^*_{v\to w}}(\Xi_{v\to w} \gamma) = \int_0^t L(\dot{\gamma}^\uparrow_s + (w-v)^\uparrow)ds + \int_0^t F_{\Xi_{v\to w}^*\omega}(\Xi_{v\to w}\gamma_s)ds
\end{equation}
for $\gamma\in \SP^\uparrow.$
We refer the reader to \cite{bakhtin2023differentiability} for technical details.

 \section{Proofs of results from Section~%\ref{sec:generalSetUp} 
 \ref{sec:general-results}}\label{sec:uniformConvergence}
 In this and following sections, we give proofs of results stated in
Sections~\ref{sec:general-results}--\ref{sec:exampleII}.

 \subsection{Proof of Theorem~\ref{thm:shapeFunction}}
     Since $\theta^z$ maps $\SP_{x,y,*}$ bijectively to $\SP_{x+z,y+z,*},$ condition \ref{cond:skew-invariance}
     %\ref{eq:stationarityReq} 
     implies 
     \begin{equation}
        \Amin_\omega(x,y) = \Amin_{\theta_*^z \omega}(x+z,y+z).
     \end{equation}
     If $\gamma_1\in \SP_{0,x,*}$ and $\gamma_2\in \SP_{x,x+y,*}$, then $\gamma_1\gamma_2\in \SP_{0,x+y,*}.$ We deduce that for all $x,y\in \R^d$
     \begin{equation}\label{eq:subadditiveProperty}
        \Amin_\omega(0,x+y) 
        \le \inf_{\gamma_1\in \SP_{0,x,*},\,\gamma_2\in \SP_{x,x+y,*}}(A(\gamma_1) + A(\gamma_2))
         %& \le A_\omega(0,x) + A_\omega(x,x+y)\notag\\
         \le \Amin_\omega(0,x) + \Amin_{\theta_*^{-x}\omega}(0,y).
         %\notag
     \end{equation}
     If $v\ne 0$, 
     combining this with \eqref{eq:expected_action_finite} of condition 
     \ref{cond:cone}, we obtain existence of the limit in~\eqref{eq:conv_to_Lambda} from Kingman's Subadditive Ergodic Theorem (see Theorem 5 in \cite{Kingman:MR0356192}). The fact that $\Lambda$ is deterministic follows from ergodicity of $\theta_*.$ If $v=0$, then $|\Amin_\omega(0,T0)|=|\Amin_\omega(0,0)|<\infty$ due to \eqref{eq:expected_action_finite}, which implies $\Lambda(0)=0$.
 
     To prove convexity of $\Lambda$, we note that if  $z = \alpha x + (1-\alpha)y\in \Cone$ for some $x,y\in\Cone$, $\alpha\in(0,1)$ then \eqref{eq:subadditiveProperty} implies 
     \[\frac{1}{T}\Amin_\omega(0,Tz) \le \frac{1}{T}\Amin_\omega(0,T\alpha x) + \frac{1}{T}\Amin_{\theta_*^{-T\alpha x}\omega}(0,T(1-\alpha)y).\]
     The left-hand side converges almost surely to $\Lambda(z)$, and  
     the right-hand side converges in probability to $\alpha\Lambda(x) + (1-\alpha)\Lambda(y)$ as $T\to \infty$. Hence, $\Lambda$ is convex. 
 
     The property $\Lambda(sv) = s \Lambda(v)$ for positive $s$ follows directly from \eqref{eq:conv_to_Lambda} because $\frac{1}{T}\Amin^T(0,Tsv) = s\frac{1}{Ts}\Amin(0,Tsv)$.

 \subsection{Proof of Theorem~\ref{thm:uniformConvergenceShape}}
 Our argument closely follows that of Theorem~2.16 in~\cite{AuffingerDamronHanson_50Years:MR3729447}. We need an auxiliary lemma first.
 \begin{lemma}\label{lem:goodApproxLemma}
     For all $w\in \Cone^{\circ}$ and all $\epsilon > 0$ there are $v^-,v^+\in \Cone^{\circ}$ and $\delta > 0$ such that $|v^\pm-w|\le \epsilon$ and the following estimates hold with probability $1$:
     \begin{equation}\label{eq:AApproxUpperBound}
         \limsup_{T\to \infty}\frac{1}{T}\sup_{w'\in \Ball(Tw,T\delta )}\Amin(0,w') \le \Lambda(v^-) + \epsilon,
     \end{equation}
     \begin{equation}\label{eq:AApproxLowerBound}
         \liminf_{T\to \infty}\frac{1}{T}\inf_{w'\in \Ball(Tw,T\delta )}\Amin(0,w')  \ge  \Lambda(v^+) - \epsilon.
     \end{equation}
 \end{lemma}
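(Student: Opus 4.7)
The strategy is to bridge $\Amin(0, w')$ for $w' \in \Ball(Tw, T\delta)$ through a deterministic reference point $Tv^\pm$ close to $Tw$, combining the pointwise shape theorem (Theorem~\ref{thm:shapeFunction}) applied at $v^\pm$ with the linear growth bounds in~\ref{cond:for-finiteness} to control the correction between $Tv^\pm$ and $w'$. Since $\Lambda$ is convex, hence continuous on $\Cone^\circ$, one has the freedom to choose $v^\pm$ within $\epsilon$ of $w$ with $|\Lambda(v^\pm)-\Lambda(w)|$ arbitrarily small.

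For the upper bound~\eqref{eq:AApproxUpperBound}, fix a cone $\Cone'$ properly contained in $\Cone$ with $w$ in its interior, let $\kappa$ be as in~\ref{cond:for-finiteness}, and pick $v^- \in \Cone^\circ$ close to $w$ with $\delta$ chosen so that $|w' - Tv^-| \leq T\epsilon/(2\kappa)$ uniformly for $w' \in \Ball(Tw, T\delta)$. Subadditivity~\ref{cond:subbadd} and stationarity~\ref{cond:skew-invariance} give
\begin{equation*}
    \Amin_\omega(0, w') \leq \Amin_\omega(0, Tv^-) + \Amin_{\theta_*^{-Tv^-}\omega}(0, w' - Tv^-),
\end{equation*}
where the first term divided by $T$ tends to $\Lambda(v^-)$ almost surely by Theorem~\ref{thm:shapeFunction}, and the correction is controlled using~\eqref{eq:linearGrowthProb}. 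The lower bound~\eqref{eq:AApproxLowerBound} is dual: the inequality $\Amin(0, Tv^+) \leq \Amin(0, w') + \Amin(w', Tv^+)$ rearranges to $\Amin(0,w')/T \ge \Amin(0,Tv^+)/T - \Amin(w',Tv^+)/T$, and the correction is bounded by~\eqref{eq:linearGrowthProb2}.

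The main obstacle is upgrading the positive-probability event in~\ref{cond:for-finiteness} to an almost-sure uniform-in-$T$ estimate on the correction: a single deterministic shift $z = Tv^\pm$ only guarantees the good event along a positive-density set of scales $T$, not for all large $T$. The natural workaround is to introduce a finite $\rho$-net $\{u_1,\ldots,u_N\} \subset \Cone^\circ \cap \Ball(w, \epsilon/2)$ of $\Ball(w, \delta)$ with $\rho$ small compared to $\epsilon/\kappa$, and for each $w' \in \Ball(Tw, T\delta)$ bridge through the nearest $Tu_j$ so that $|w' - Tu_j| \leq T\rho$ is small. Applying Theorem~\ref{thm:shapeFunction} simultaneously to the finite family $\{u_j\}$, combined with the $\R^d$-ergodic theorem applied to the indicators of the good sets of~\ref{cond:for-finiteness} (ensuring positive density of good shifts for a.s.\ $\omega$), produces the uniform-in-$w'$ bound on correction terms; a judicious element of the net then plays the role of the deterministic $v^\pm$ in the statement.
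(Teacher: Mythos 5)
Your skeleton --- bridge $\Amin(0,w')$ through a reference point near $Tw$, apply Theorem~\ref{thm:shapeFunction} at that point, and control the correction with the linear-growth condition \ref{cond:for-finiteness} --- is the same as the paper's, and you correctly isolate the real difficulty: the event in \eqref{eq:linearGrowthProb} has only positive probability, so a deterministic base point enjoys it only along a positive-density set of scales. But your proposed fix does not resolve that difficulty. The finite $\rho$-net addresses uniformity in $w'$, which was never the problem (one reference point handles the whole ball $\Ball(Tw,T\delta)$, since the correction $\kappa(|Tw-z|+T\delta)$ is uniform in $w'$); it does nothing about the scales. If the bridge points are the deterministic points $Tu_j$, then for each $j$ the good event based at $Tu_j$ is still only guaranteed along a positive-density set of $T$'s, and a finite union of positive-density sets need not contain all large $T$; moreover you would need the good event at the particular $u_j$ nearest to $w'$, not at some $u_j$. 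If instead the bridge point is meant to be a random ``good shift'' produced by the ergodic theorem, you never explain how $\frac1T\Amin(0,z)$ at such a random $z$ is compared with $\Lambda$, since Theorem~\ref{thm:shapeFunction} applies only along deterministic rays. The paper's resolution is precisely this missing step: fix one $v^-$ close to $w$, use the ergodic theorem along the single ray $\{nv^-\}$ to extract a random increasing sequence of good scales $(n_k)$ whose positive density forces $n_{k+1}/n_k\to 1$, bridge through $n_{k(T)}v^-$ with $n_{k(T)}\le T\le n_{k(T)+1}$, and conclude using $n_{k(T)}/T\to 1$ together with the a.s.\ convergence $\frac1T\Amin(0,Tv^-)\to\Lambda(v^-)$; see \eqref{eq:nkSequenceReq}--\eqref{eq:AwPrimeBound}. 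As written, your argument does not establish the bound for \emph{all} large $T$ almost surely.

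A secondary omission: \ref{cond:for-finiteness} bounds $|\Amin(z,w')|$ by $\kappa|w'-z|$ only when $w'-z$ lies in a cone $\Cone'$ properly contained in $\Cone$ (and, for the lower bound, when $z-w'\in\Cone'$). So $v^-$, $v^+$ and $\delta$ cannot be chosen by a purely metric condition such as $|w'-Tv^\pm|\le T\epsilon/(2\kappa)$; one needs $\Ball(w,\delta)\subset v^-+\Cone^-$ and $\Ball(w,\delta)\subset v^+-\Cone^+$, i.e.\ the reference points must sit ``behind'', respectively ``ahead of'', the ball in the cone order, which is exactly how the paper selects them. This is vacuous when $\Cone=\R^d$, but the lemma is stated for general cones, where your choice would break down in the directed (LPP-type) setting.
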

 \begin{proof}
 We will first establish \eqref{eq:AApproxUpperBound}.
     
     We will take $\epsilon' > 0$ (to be specified later) satisfying $\epsilon' \le \epsilon.$  Take $\delta > 0$, $v^-\in \Cone^{\circ}$ and $\Cone^-\subset \Cone$ be such that $\Cone^-$ is a cone properly contained
     in $\Cone$ (see our definition of proper containment just before \ref{cond:for-finiteness}) such that $\Ball(w, \delta ) \subset v^- + \Cone^-$ and $|w-v^-| < \epsilon'.$ Note that this implies that $\Ball(R w, R \delta) \subset r v^- + \Cone^-$ for all $r$ and~$R$ satisfying  $0< r \le R.$

     Due to \eqref{eq:linearGrowthProb} of condition \ref{cond:for-finiteness}, ergodicity with respect to $\theta_*$, 
     and skew-invariance condition~\ref{cond:skew-invariance}, we obtain that
     with probability one there exists an increasing sequence $(n_k)_{k\in \N}$ such that $n_k\to \infty$ as $k\to \infty,$ $n_{k+1}/n_k\to 1$ as $k\to \infty,$ and
     \begin{equation}\label{eq:nkSequenceReq}
        \Amin(n_k v^-,w') \le \kappa |w' - n_k v^-|,\quad \forall w'\in n_k v^- + \Cone^-.
     \end{equation}

     Additionally, almost surely $\frac{1}{T} \Amin(0,T v^-)\to \Lambda(v^-)$ as $T\to \infty.$ Let us now 
     consider~$\omega$ satisfying these two conditions.
 
     By concatenating paths from $0$ to $v^-$ and from $v^-$ to $w'$, we deduce from \eqref{eq:nkSequenceReq} that for all $k\in \N$ and all $w'\in n_k v^- + \Cone^-,$
     \begin{equation}\label{eq:AwPrimeBound}
        \Amin(0,w') \le \Amin(0,n_kv^-) + \Amin(n_k v^- , w') \le \Amin(0,n_kv^-) + \kappa |w' - n_kv^-|.
     \end{equation}
     For $T>0$ sufficiently large, let $k(T)$ be such that 
     \[ n_{k(T)} \le T \le n_{k(T)+1}.\]
     For all $w' \in \Ball(Tw, T\delta )\cap \Cone$, \eqref{eq:AwPrimeBound} and $|w' - Tw|<T\delta$ imply
     \[\Amin(0,w') \le \Amin(0,n_{k(T)}v^-) + \kappa (|Tw - n_{k(T)} v^-| + T \delta ). \]
     Since $n_{k(T)} / T\to 1$ as $T\to \infty$ and $|w-v^-|<\epsilon'$, the right-hand side of this inequality is bounded above for sufficiently large $T$ by 
     \[\Amin(0,n_{k(T)} v^-) + \kappa (2 T \epsilon' + T \delta ).\]
     Taking $\epsilon' < \epsilon/(4\kappa)$ and $\delta < \epsilon / 2$, we obtain 
     \[\sup_{w'\in \Ball(T w, T\delta )}\Amin(0,w') \le \Amin(0,n_{k(T)} v^-) + T \epsilon.\]
     Dividing by $T$ and taking $T\to \infty$ establishes \eqref{eq:AApproxUpperBound} on an event of full probability.
     
     The argument for the claim \eqref{eq:AApproxLowerBound} is almost identical to the preceding argument but using \eqref{eq:linearGrowthProb2} in place of \eqref{eq:linearGrowthProb}, so we will only give a sketch of the proof. Choose $\epsilon'' \le \epsilon$ and $v^+ \in w + \Cone'$ satisfying $|v^+ - w| < \epsilon''$. Let $\delta' > 0$ and $\Cone^+$ be a cone properly contained in $\Cone$ such that $\Ball(w,\delta) \subset v^+ - \Cone^+.$   Almost surely there is an increasing sequence $n_k\to \infty$ satisfying $n_{k+1} / n_k\to 1$ and such that 
     \[\Amin(w',n_k v^+) \le \kappa|w' - n_kv^+|,\quad \forall w'\in n_k -\Cone^+.\]
     If $k=k(T)$ is such that $n_{k(T)}\le T\le n_{k(T)+1}$, then for all $w'\in \Ball(Tw,T\delta),$
     \begin{align*}
        \Amin(0,n_k v^+) \le \Amin(0,w') + \Amin(w',n_kv^+)& \le \Amin(0,w') + \kappa(|n_k v^+ - Tw| + T\delta)\\&\le \Amin(0,w') + T\epsilon
     \end{align*}
     for $\epsilon''$ and $\delta$ sufficiently small. We can then conclude that \eqref{eq:AApproxLowerBound} holds on a full measure set by dividing by $T$ and taking $T\to \infty.$
 \end{proof}

 \begin{proof}[Proof of \Cref{thm:uniformConvergenceShape}]
 
     First note that \eqref{eq:linearGrowthProb} implies $\Lambda(v)> -\infty$ for all $v\in \Cone^{\circ}.$ Indeed, if $\Cone'$ is a cone properly contained  in $\Cone$ that contains $v$, then 
     \[\Lambda(v) = |v|\lim_{T\to \infty}\frac{\Amin^T(v)}{|v|T} \ge - |v|\sup_{x\in \Cone',\,|x|>1}\frac{|\Amin(0,x)|}{|x|}>-|v|\kappa > -\infty\]
     with positive probability. Since $\Lambda(v)$ is nonrandom, we have $\Lambda(v) > -\infty$. 
     
     For every $w\in \Cone^{\circ}$ and $\epsilon > 0$, let $v^-(w,\epsilon),v^+(w,\epsilon)\in \Cone^{\circ}$ and $\delta(w,\epsilon)$ be such that the conclusions of \Cref{lem:goodApproxLemma} hold.

     Since $\bigcup_{w\in \Cone} \Ball(w,\delta(w,\epsilon))$ is an open cover of $\Cone^{\circ},$ for every $\epsilon>0,$ there is a countable set of tuples $(w_k(\epsilon), v_k^-(\epsilon),v_k^+(\epsilon),\delta_k(\epsilon))$ indexed by $k\in \N$ such that the conclusions of \Cref{lem:goodApproxLemma} hold for each tuple and such that $\bigcup_{k\in \N}\Ball(w_k(\epsilon),\delta_k(\epsilon))$ is an open cover of $\Cone^{\circ}.$

     Define the event $\Omega(k,\epsilon)$ as
     \begin{align*}
         \Omega(k,\epsilon) = \Big\{\omega\,:\, \textrm{\eqref{eq:AApproxUpperBound} and \eqref{eq:AApproxLowerBound} hold for }(w_k(\epsilon), v_k^-(\epsilon),v_k^+(\epsilon),\delta_k(\epsilon))\Big\}
     \end{align*}
     Finally, define the event
     \[\Omega_0 = \bigcap_{m\in \N}\bigcap_{k\in \N} \Omega(k,m^{-1}).\]
     By \Cref{lem:goodApproxLemma}, $\Prb(\Omega_0) = 1.$ We will prove that \eqref{eq:uniformConvergence} holds for all $\omega\in \Omega_0.$
 
     Suppose $\omega\in\Omega_0$, let $\epsilon > 0$, and let $K$ be a compact subset of $\Cone^{\circ}$. Let $m\in \N$ be such that $m^{-1} < \epsilon$ and such that if $|v-w|<m^{-1}$ and $w\in K,$ then 
     \begin{equation}\label{eq:epsilonCloseLambda}
         |\Lambda(v)-\Lambda(w)| < \epsilon.
     \end{equation}

     By compactness of $K$, we can find indices $i_1,\dots,i_k$ such that 
     \begin{equation*}%\label{eq:ConeCoveringK}
         K\subset \bigcup_{\ell=1}^k \Ball(w_{i_\ell}(m^{-1}),m^{-1}).
     \end{equation*}
     If $w \in \Ball(w_{i_\ell}(m^{-1}), m^{-1})$ for some $\ell\in\{1,\ldots,k\}$, then
     \begin{align*}
         \frac{1}{T}\Amin^T(w) - \Lambda(w)  \le \frac{1}{T}(\Amin^T(w) - \Lambda(v^-_{i_\ell}(m^{-1}))) + |\Lambda(v_{i_\ell}^-(m^{-1})) - \Lambda(w)|.
     \end{align*}
     Applying \eqref{eq:AApproxUpperBound} and \eqref{eq:epsilonCloseLambda}, we obtain
     \begin{align*}
         \limsup_{n\to \infty}\sup_{w\in B(w_{i_\ell}(m^{-1}), m^{-1})}\Big[\frac{1}{T}\Amin^T(w) - \Lambda(w) \Big] \le 2 \epsilon.
     \end{align*}
     The above holds for all $\epsilon > 0$ and all $\ell=1,\dots, k$. Therefore,
     \[\limsup_{T\to \infty}\sup_{w\in K} (\frac{1}{T} \Amin^T(w) - \Lambda(w)) \le 0.\]
     The reverse inequality can be proven similarly using \eqref{eq:AApproxLowerBound}, and  \eqref{eq:uniformConvergence} follows. 
 \end{proof}

 \subsection{Proof of Theorem~\ref{thm:limit-shape}}\label{sec:proof-limit-shape}
     Fix $\epsilon > 0$ and a compact set $K\subset \Cone^{\circ}$. The set $K':= ((1+\epsilon)\rball_\Lambda)\cap K$ is a compact subset of $\Cone^{\circ}$.    Thus, \Cref{thm:uniformConvergenceShape} implies that if $\omega\in\Omega_0$, then
     \begin{equation}
         \label{eq:Delta_T-to0}
     \lim_{T\to\infty}\Delta_T=0,
     \end{equation}
     where
     \[\Delta_T=\sup_{w\in K'}|\Lambda(w) - \frac{1}{T}\Amin(0,Tw)|.\]
     If $v\in ((1-\epsilon)\rball_\Lambda)\cap K$, then  $\Lambda(v) \le 1-\epsilon$ due to
     $1$-homogeneity of $\Lambda$. Therefore,
     \[\frac{1}{T}\Amin(0,Tv) \le \Lambda(v) + \Delta_T\le 1-\epsilon + \Delta_T,\]
    If $\Delta_T < \epsilon$ then $v\in \frac{1}{T}\rball_\omega(T)$. Additionally, if $v\in \frac{1}{T}\rball_\omega(T)\cap K$, then
     \[\Lambda(v) \le \frac{1}{T}\Amin(0,Tv)+ \Delta_T\le 1 + \Delta_T,\]
     implying that $v\in K'$ if $\Delta_T\le \epsilon.$ Therefore, \eqref{eq:Delta_T-to0} implies that 
 there is $T_0 > 0$ such that the inclusion \eqref{eq:local-convergence-of-shapes} (with $N = \rball_\Lambda$) holds and thus 
 Theorem~\ref{thm:limit-shape} is established.

 To prove the claim made in Remark~\ref{rem:limit-shape-strong}, we note that if $\Cone=\R^d$ and $\Lambda(v)>0$ for all $v\ne 0$, then
 $\rball_\Lambda$ is compact and contained in $\Cone^{\circ}$.
Thus, \eqref{eq:shapeConvergence-strong} follows from \eqref{eq:shapeConvergenceCompact} since we can take $K=(1+\epsilon)\rball_\Lambda$ in the definition of local convergence.

 \subsection{Proof of \Cref{lem:diffLemma}}
 %\label{sec:differentiability-lemmas}
 %\begin{proof}[Proof of \Cref{lem:diffLemma}]
     Differentiability of $f$ relatively to $H$ means that there is a vector $F\in H$ such that
     \begin{equation}
         \label{eq:F-is-grad-f}
     f(w) = f(v) + \langle F, w-v\rangle + o(|w-v|),\quad H(\delta)\ni w\to v.
     \end{equation}

     We let $e_i$ be the basis vector in $\R^d$ with 1 in the $i$th coordinate and $0$ elsewhere. Let $h_1,\dots, h_{d-1}$ be an orthonormal basis for $H$ and let $\mathbf{H}$ be the linear map satisfying $\mathbf{H} e_i = h_i$ for $i=1,\dots, d-1$ and $\mathbf{H} e_d = v.$ Since $v\notin H,$ $\mathbf{H}$ is invertible. Note that since $u = \mathbf{H}\mathbf{H}^{-1}u,$ the vector $\mathbf{H}^{-1}u$ is the representation of $u$ in the basis given by $(h_1,\dots, h_{d-1},v).$
 
     Define the map 
     \[s(w) = \frac{1}{\langle e_d, \mathbf{H}^{-1} w\rangle},\quad w\in \R^d\setminus\{0\}.\]
     We claim that $s(w)w \in v + H$ for all $w\in \R^d\setminus\{0\}$. We have $u\in H$ if and only if $\langle e_d, \mathbf{H}^{-1}u\rangle = 0,$ because 
     \[u = \mathbf{H} \mathbf{H}^{-1} u =  \sum_{i=1}^{d-1}\langle e_i, \mathbf{H}^{-1}u\rangle h_i+ \langle e_d, \mathbf{H}^{-1}u\rangle v.\]
     Since $\mathbf{H}^{-1}v = e_d$, the claim $s(w)w\in v+H$ follows from 
     \[\langle e_d, \mathbf{H}^{-1}(s(w)w - v) \rangle = \frac{1}{\langle e_d, \mathbf{H}^{-1} w\rangle}\langle e_d, \mathbf{H}^{-1}w \rangle - \langle e_d, \mathbf{H}^{-1}v \rangle=0.\]

     We must show that $f(w) - f(v)$ is $o(|w-v|)$-close to a linear map, as $w\to v.$ Define $I_1$ and $I_2$ in the following way:
     \begin{align*}
         f(w)  - f(v)&= (f(w) - f(s(w)w)) + (f(s(w)w) - f(v)) = I_1 + I_2.
     \end{align*}
     Using differentiability of $s$, the fact that $s(v) = 1$, and continuity of $f$, we obtain
     \begin{multline*}I_1 = f(w)(1 - s(w)) = -f(w) \langle \nabla s(v),w-v\rangle + o(|w-v|) 
         \\ = -f(v)\langle \nabla s(v),w-v\rangle + o(|w-v|).
     \end{multline*}
     Additionally, since $s(w)w - v\in H,$ we can use \eqref{eq:F-is-grad-f} to write
     \[I_2 = \langle F, s(w)w - v\rangle + o(|s(w)w - v|).\]
     We have $s(w)w - v = (s(w)-1)w + w-v$, so $o(|s(w)w - v|) = o(|w-v|)$. Therefore,
     \begin{align*}
         I_2 &= \langle F, s(w)w - v\rangle + o(|w - v|) \\
         & = \langle F,w-v\rangle + (s(w) - 1)\langle F,w\rangle + o(|w-v|)\\
         & = \langle F,w-v\rangle + \langle \nabla s(v),w-v\rangle\langle F,v\rangle + o(|w-v|).
     \end{align*}
     In sum, 
     \[f(w) - f(v) =  -f(v)\langle \nabla s(v),w-v\rangle + \langle F,w-v\rangle + \langle \nabla s(v),w-v\rangle\langle F,v\rangle + o(|w-v|).\] Hence, $f$ is differentiable at $v$ with gradient given by $(\langle F,v\rangle-f(v))\nabla s(v) + F$.
 %\end{proof}
 
 \subsection{Proof of Lemma~\ref{lem:deterministicDifferentiability}}
 In the below we continue to fix $v\in \Cone^\circ,$ $\delta > 0$ and define $H(\delta) = (v+H)\cap \Cone^\circ\cap \Ball(v,\delta).$ For a function $f:H(\delta)\to \R$ we define the subdifferential to be 
 \[\partial^\vee_H f(x) = \{\xi\in H\,:\,\forall y\in H,\,f(y)-f(x)\ge \langle \xi,y-x\rangle\}.\]
 If $f$ is convex, then $\partial^\vee_H f(x)$ is nonempty. A convex function is differentiable on $v+H$ at $v$ if and only if $\partial^\vee_H f(v)$ is a one-point set. 
 %\begin{proof}[Proof of Lemma~\ref{lem:deterministicDifferentiability}]
     Let $\mathcal{G} \subset H\cup \{\infty\}$ denote the set of limit points of the sequence $(\xi_n)_{n\in \N}$ and let $\xi^*\in\mathcal{G}$ be a limit point of some subsequence $(\xi_{n_k})_{k\in \N}$. First we rule out the case $\xi^* = \infty.$ If $\xi_{n_k}\to \infty$ we may, by taking a further subsequence, assume that $\xi_{n_k}/|\xi_{n_k}|$ converges to some vector $\xi^*_\infty.$ Take a vector $w\in \mathcal{D}$ such that $\langle \xi^*_\infty,w-v\rangle < 0.$ Then, 
 $\langle \xi_{n_k}/|\xi_{n_k}|,v-w\rangle >0$ for sufficiently large $k$ and by \eqref{finiteDerivativeInequality} and \eqref{eq:f_n-to-f}, $|\xi_{n_k}|\langle \xi_{n_k}/|\xi_{n_k}|,v-w\rangle$ is bounded above. It follows that the sequence $(|\xi_{n}|)_{n\in \N}$ is bounded.
     
     Taking $k\to \infty$ in the inequality 
     \[f_{n_k}(w) - f_{n_k}(v) \le \langle \xi_{n_k}, w-v\rangle + h(w),\]
      we obtain 
     \begin{equation}\label{fUpperDifferential}
         f(w) - f(v) \le \langle \xi^*, w-v\rangle + h(w)
     \end{equation}
     for all $w\in \mathcal{D}$,
     Let $p\in \partial^\vee f(v).$ Then, the inequality
     \[f(w) - f(v) \ge \langle p, w-v\rangle\]
     along with \eqref{fUpperDifferential} implies 
     \begin{equation}\label{gStarInequality}
         0 \le \langle \xi^* - p, w-v\rangle + h(w)
     \end{equation}
     for all $w\in \mathcal{D}$. Let $\zeta \in H$ satisfy $|\zeta| = 1$. Since $\mathcal{D}$ is dense in $H(\delta),$ there is a sequence $(w_m(\zeta))_{m\in \N}$ in $\mathcal{D}$ converging to $v$ such that 
     \[\lim_{m\to \infty}\frac{w_m(\zeta)-v}{|w_m(\zeta)-v|} = \zeta.\]
     Inequality \eqref{gStarInequality} then implies 
     \[0 \le \langle \xi^* - p, \zeta\rangle.\]
     Repeating this procedure with $-\zeta$ gives us 
     \[0 = \langle \xi^*-p, \zeta\rangle.\]
     Since the above holds for all $\zeta\in H$ satisfying $|\zeta| = 1$, we have $\xi^* = p.$ But $\xi^*\in \mathcal{G}$ and $p\in \partial^\vee f(v)$ were arbitrary, and so, in fact, $\lim_{n\to \infty}\xi_n$ is well-defined and
     \[\mathcal{G} = \partial^\vee f(v) = \big\{\lim_{n\to \infty}\xi_n\big\}.\]
     Hence, $f$ is differentiable at $v$ with derivative equal to $\lim_{n\to \infty} \xi_n.$
 %\end{proof}

\section{Proof of Theorem~\ref{th:main_riemannian}}
\label{sec:proof-of-main-riemannian}
\Cref{th:main_riemannian} will follow once we prove that the action and minimal action described in \Cref{sec:RiemannianFPPeg} satisfy the conditions outlined in \Cref{sec:general-results}.

Conditions \ref{cond:subbadd} --- \ref{cond:cone} are checked in Section~\ref{sec:checking-A}. Conditions \ref{cond:setup_diff}--\ref{cond:M_infty} 
and Part~\ref{RiemannianFPPDerivative} of Remark~\ref{rem:addtion-to_riemannian}
are checked in
Section~\ref{sec:secondDerivRiemannianFPP}.
Condition~\ref{cond:for-finiteness} is checked in 
Section~\ref{sec:checking-cond-for-finiteness}. Proofs of some auxiliary results are given in Section~\ref{sec:proofs_of_lemmas}.

\subsection{Conditions \ref{cond:subbadd} --- \ref{cond:cone}}\label{sec:checking-A}
The map $A:\Omega\times \SP\to \R$ defined in \eqref{eq:AgammaDefFPP1} is measurable due to measurability of $g$. 
A stronger (strictly additive) version of the subadditivity condition \ref{cond:subbadd} follows due to additivity of integrals. The skew invariance condition 
\ref{cond:skew-invariance} is implied by assumption \ref{stationaryPhiCondition} and the fact that $\frac{d}{dt}(\theta^z \gamma)_s = \theta^z\dot{\gamma}_s$.

The following lemma verifies the measurability of the minimal action 
required in condition~\ref{cond:measurability}
as well as the existence of a measurable selection of minimizer required in condition~\ref{cond:cone}. See \Cref{sec:checkingConditionsFPP} for the proof.
\begin{proposition}\label{prop:measurableSelectionFPP}
    \begin{enumerate}
    \item \label{it:min-seclection}
    There is a measurable mapping $\gamma^*$ from $\R^d\times \R^d\times \Omega$ to $\SP$ such that $\gamma^*(x,y,\omega)\in \SP_{x,y,*}$ and
    \begin{equation}\label{eq:gammaXYDef}
        A(\gamma^*(x,y,\omega)) = \Amin_\omega(x,y)
    \end{equation}
    for all $(x,y,\omega).$ 
    Additionally, $\gamma^*$ can be chosen so that
    \begin{equation}\label{eq:gammaStarSpeed}
        g_{\gamma^*_s(x,y)}(\dot{\gamma}^*_s(x,y),\dot{\gamma}^*_s(x,y)) = 1,\quad s\in[0,t].
    \end{equation}
    \item  $\Amin:\Omega\times \R^d\times \R^d\to\R\cup\{\infty\}$ is jointly measurable.
    \end{enumerate}
\end{proposition}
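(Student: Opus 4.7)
The plan is to first prove joint measurability of $\Amin$ by exhibiting it as a countable infimum, then apply the Kuratowski--Ryll-Nardzewski measurable selection theorem to produce $\gamma^*$, and finally adjust the parametrization to unit Riemannian speed. The uniform lower bound $g \ge \lambda I$ from \ref{uniformPositiveDefinite} is essential throughout: it makes each random Riemannian manifold $(\R^d, g_\omega)$ complete as a metric space, and forces any near-minimizer between fixed endpoints to stay in an a priori bounded Euclidean region where $g$ is locally bounded above by assumption \ref{phiwDiffCondition}.

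For (2), the Riemannian length in \eqref{eq:AgammaDefFPP1} is invariant under increasing reparametrization, so it suffices to compute the infimum over paths defined on $[0,1]$. For each $n \in \N$ and each tuple $\bar z = (z_1,\ldots,z_{n-1}) \in (\Q^d)^{n-1}$, let $P^{\bar z}_{x,y}:[0,1]\to\R^d$ be the piecewise linear path with vertices $x, z_1, \ldots, z_{n-1}, y$ traversed in equal time steps. A standard density argument, combining piecewise linear interpolation of smooth paths with continuity of $g$, yields
\begin{equation*}
\Amin_\omega(x,y) = \inf_{n\in\N}\ \inf_{\bar z \in (\Q^d)^{n-1}} A_\omega(P^{\bar z}_{x,y}).
\end{equation*}
Each $A_\omega(P^{\bar z}_{x,y})$ is a finite sum of line-segment integrals of the jointly measurable integrand $\sqrt{g_{\cdot,\omega}(\cdot,\cdot)}$, so is jointly measurable in $(\omega,x,y)$, and a countable infimum preserves joint measurability, giving~(2).

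For (1), existence of a minimizer between any two points with respect to a fixed $\omega$ follows from Hopf--Rinow applied to the complete $C^2$-Riemannian manifold $(\R^d, g_\omega)$; alternatively, one can argue directly by taking unit-speed minimizing sequences (whose Euclidean derivatives are bounded by $\lambda^{-1/2}$), extracting a $C^0$-convergent subsequence via Arzelà--Ascoli, and using lower semicontinuity of~$A$. To upgrade this to a jointly measurable selection, I would consider the set-valued map
\begin{equation*}
(x,y,\omega) \mapsto \Sigma(x,y,\omega) \subset C([0,1];\R^d),
\end{equation*}
assigning to each triple the collection of constant-Riemannian-speed minimizers from $x$ to $y$ reparametrized to the unit interval. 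This set is nonempty and closed in the Polish space $C([0,1];\R^d)$, and its weak measurability reduces, via the countable family $\{P^{\bar z}_{x,y}\}$ from part~(2), to the joint measurability of $\Amin$ already established. Kuratowski--Ryll-Nardzewski then yields a jointly measurable selection $\tilde\gamma(x,y,\omega)$, which I would reparametrize affinely from $[0,1]$ to $[0,\Amin_\omega(x,y)]$ to obtain $\gamma^*$ with $g_{\gamma^*_s}(\dot\gamma^*_s,\dot\gamma^*_s)\equiv 1$, as required by \eqref{eq:gammaStarSpeed}.

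The main obstacle is the verification of the weak measurability hypothesis of the selection theorem: showing that $\{(\omega,x,y) : \Sigma(x,y,\omega) \cap U \ne \emptyset\}$ is measurable for every open $U \subset C([0,1];\R^d)$. The cleanest route is to restrict attention to paths of uniformly bounded Riemannian length (the regime containing all minimizers, by the Euclidean distance lower bound $d_{g_\omega} \ge \sqrt{\lambda}\,|\cdot|$), where local boundedness of $g$ makes $A$ continuous in the $W^{1,1}$ topology; the condition on $\Sigma \cap U$ can then be recast in terms of action values attained on the countable family $\{P^{\bar z}_{x,y}\}$, whose joint measurability is already in hand.
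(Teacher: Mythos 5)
Your part (2) is fine and is in fact more direct than the paper's treatment: reducing $\Amin$ to a countable infimum of $g$-lengths of rational-vertex polygonal paths is a legitimate standard density argument (the paper instead obtains measurability of $\Amin$ as a byproduct of the measurable selection). For part (1) your overall architecture is close in spirit to the paper's: both ultimately rest on Kuratowski--Ryll-Nardzewski. The difference is that the paper applies KRN through an abstract selection lemma (Lemma~\ref{lem:measurableSelectionLem}) for the \emph{energy} functional on the Banach space $Y=\{h\in L^1([0,1];\R^d):\int h=0\}$, using weak compactness of sublevel sets (from \ref{uniformPositiveDefinite}), weak lower semicontinuity, and an explicitly verified countable approximation family (rational endpoints together with a countable dense set of metrics in $C^2_{loc}$), and only afterwards reparametrizes the constant-speed energy minimizer to unit Riemannian speed; you propose to work directly with the length functional and the correspondence $\Sigma$ of constant-speed minimizers in $C([0,1];\R^d)$, with compactness coming from Arzel\`a--Ascoli.

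The genuine gap is exactly at the step you flag as the main obstacle, and your proposed resolution does not close it. Weak measurability of the argmin correspondence does \emph{not} ``reduce to the joint measurability of $\Amin$'': measurability of a value function never by itself yields measurability of the minimizer set-valued map (this is precisely why the paper proves Lemma~\ref{lem:measurableSelectionLem}, whose whole content is the identity \eqref{eq:UpsiMeasurable} expressing $U_{\Psi^{-1}}$ through constrained minimizers over a countable family of balls). To carry out your recasting of $\{\Sigma(x,y,\omega)\cap U\neq\emptyset\}$ you would need, at a minimum: (i) joint measurability of the constrained infima $m_{\overline B}(x,y,\omega)=\inf\{A_\omega(\gamma):\gamma\in\overline B,\ \gamma\in\SP_{x,y,*}\ \text{suitably parametrized}\}$ over a countable basis of closed sup-norm balls $\overline B$ --- this is not given by your family $\{P^{\bar z}_{x,y}\}$ from part (2), since the approximating polygonal paths must now be kept inside $\overline B$ as \emph{parametrized} curves, which forces a redone density argument with control of parametrizations and of the open/closed ball discrepancy; (ii) attainment of $m_{\overline B}$ within the class of constant-Riemannian-speed paths, which requires showing that uniform limits of constant-speed near-minimizers are again constant-speed minimizers (an argument on lengths of subintervals, using lower semicontinuity on each $[s,t]$); and (iii) the equivalence $\Sigma\cap U\neq\emptyset \iff \exists\,\overline B\subset U$ rational with $m_{\overline B}=\Amin$. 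None of this is automatic, and it is essentially the content of conditions (1)--(3) of Lemma~\ref{lem:measurableSelectionLem} and their verification in the paper. Note also that the usual shortcut (measurable graph plus the von Neumann--Aumann selection theorem) is not available as a substitute here without further work, since it requires completing the product $\sigma$-algebra and produces only an almost-everywhere selection, whereas the proposition asks for a selection defined for all $(x,y,\omega)$. As written, then, your proof is a plausible plan whose decisive measurability step is asserted rather than proved.
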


The requirement \eqref{eq:expected_action_finite} of assumption \ref{cond:cone} follows from
\begin{multline*}
    \E\sup_{s\in [0,1]}|\Amin(0,sx)| \le \E \sup_{s\in [0,1]}\int_0^{s}  \sqrt{g_{ux}(x,x)}du 
    \\
    \le |x|\int_0^1\E \|g_{ux}\|^{1/2}du = |x|\E\|g_{0}\|^{1/2} < \infty.
\end{multline*}

\subsection{Conditions \ref{cond:setup_diff}--\ref{cond:M_infty}}\label{sec:secondDerivRiemannianFPP}

Condition \ref{cond:setup_diff} is implied by \Cref{lem:XiProperties} and \ref{phiwDiffCondition}. 

We now verify \ref{cond:M_infty}. We fix $v\in \R^d\setminus\{0\}$ 
(and $H$) and, for $\delta>0$,  define $H(\delta)$
according to~\eqref{eq:H-delta}. 

We let $G^w$ be the Riemannian metric defined by $G^w_x(p,p) =  g^{w,v}_x(\Xi_{v\to w}p, \Xi_{v\to w} p)$. In particluar, 
\begin{equation}
    \label{eq:Gg}
    G^v_x = g_x,\quad x\in\R^d.
\end{equation}
The transformed action in 
\eqref{eq:Bwvgamma1} can be rewritten as
\begin{equation}
    \label{eq:Bwvgamma2}
    B(w,v,\gamma) = \int_0^t \sqrt{G^w_{\gamma_s} (\dot{\gamma}_s,\dot{\gamma}_s)} ds.     
\end{equation}

Recall that $\YField$ is the random function given in condition \ref{phiwDiffCondition}. We postpone the proof of the following lemma until \Cref{sec:proofs_of_lemmas}.

\begin{lemma}\label{lem:GDiffBound}
    For each $v\neq 0$, there is $c,C>0$ (depending on $v$) such that with probability one, for all $i=1,\dots,d$, $x\in \R^d$, and $w\in H(\delta)$ (where $\delta$ is defined in \ref{gBound}),
    \begin{align}\label{eq:GUpperBound}
        \|G^w_x\| + \|\partial_{w_i}G^w_x\| + \|\partial_{w_jw_i}G^w_x\| \le C \YField(x),
    \end{align}
    and, for all $p\in \R^d$,
    \begin{equation}\label{eq:GLowerBound}
        G^w_x(p,p) \ge c|p|^2.
    \end{equation}
\end{lemma}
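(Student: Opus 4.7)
The plan is to reduce both bounds to properties of the random field $g^{w,v}_x$, controlled by condition~\ref{phiwDiffCondition}, combined with explicit algebraic facts about the linear map $\Xi_{v\to w}$ defined in \eqref{eq:XiFPP}.

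The first step is to establish uniform two-sided operator-norm bounds on $\Xi_{v\to w}$. Writing $\Xi_{v\to w} = I + (w-v)v^\top/|v|^2$, this is a rank-one perturbation of the identity. Since $H$ is the orthogonal complement of the span of $v$, for any $w \in v+H$ we have $\langle v, w-v\rangle = 0$, so the Sherman--Morrison formula yields the explicit inverse
\[
\Xi_{v\to w}^{-1} = I - (w-v)v^\top/|v|^2.
\]
Hence for $w \in H(\delta)$ both $\|\Xi_{v\to w}\|$ and $\|\Xi_{v\to w}^{-1}\|$ are at most $C_1 := 1 + \delta/|v|$, and consequently $|p|/C_1 \le |\Xi_{v\to w}p| \le C_1|p|$ uniformly in $p \in \R^d$ and $w \in H(\delta)$. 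I also note that $\Xi_{v\to w}p$ is affine in $w$, so $\partial_{w_i}\Xi_{v\to w}p = (\langle v,p\rangle/|v|^2)\,e_i$ (whence $|\partial_{w_i}\Xi_{v\to w}p| \le |p|/|v|$) and $\partial_{w_j w_i}\Xi_{v\to w}p = 0$.

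For the upper bound \eqref{eq:GUpperBound}, I would expand $G^w_x(p,p) = g^{w,v}_x(\Xi_{v\to w}p, \Xi_{v\to w}p)$ using the product rule. Each of $G^w_x(p,p)$, $\partial_{w_i}G^w_x(p,p)$, and $\partial_{w_j w_i}G^w_x(p,p)$ is then a finite sum of terms of the form $(\partial^\alpha g^{w,v}_x)(L_1 p, L_2 p)$ with $|\alpha| \le 2$, where each $L_m$ is either $\Xi_{v\to w}$ or one of its first-order $w$-partials. By the previous step each $L_m$ has operator norm at most a constant depending only on $v$ and $\delta$, so combining with the pointwise bound \eqref{eq:boundForg} from~\ref{gBound}, each such term contributes a symmetric bilinear form with operator norm at most a constant multiple of $\YField(x)$. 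Summing these contributions and rebalancing constants yields \eqref{eq:GUpperBound}.

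For the lower bound \eqref{eq:GLowerBound}, condition~\ref{uniformPositiveDefinite} applied to $g$ at the point $\Xi_{v\to w}x$ under the environment $\Xi_{v\to w}^*\omega$ gives $g^{w,v}_x(q,q) \ge \lambda |q|^2$ for every $q \in \R^d$. Taking $q = \Xi_{v\to w}p$ and using $|\Xi_{v\to w}p| \ge |p|/C_1$ from the first step yields $G^w_x(p,p) \ge (\lambda/C_1^2)|p|^2$, so \eqref{eq:GLowerBound} holds with $c := \lambda/C_1^2$. I do not anticipate a serious obstacle here: the key points are using the orthogonality $\langle v, w-v\rangle = 0$ to extract an explicit bounded inverse for $\Xi_{v\to w}$, and then keeping careful track of the dimensional and $v$-dependent constants produced by the product-rule expansion.
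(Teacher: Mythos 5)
Your proof is correct and follows essentially the same route as the paper: uniform operator-norm bounds on $\Xi_{v\to w}$ and its first $w$-derivatives (second derivatives vanishing), a product-rule expansion combined with \eqref{eq:boundForg} for the upper bound, and uniform positive definiteness of $g$ together with invertibility of $\Xi_{v\to w}$ for the lower bound. The only (minor) difference is that you obtain the lower bound quantitatively via the explicit Sherman--Morrison inverse, exploiting $\langle v,w-v\rangle=0$, whereas the paper argues softly through continuity of the smallest eigenvalue of $\Xi_{v\to w}$ and compactness in $w$; both are fine.
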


\Cref{prop:measurableSelectionFPP} implies that there is $t>0$ and a path $\gamma^T=\gamma^T(v)\in \SP_{0,Tv,t}$ such that 
\begin{equation}
    B(v,v,\gamma^T(v)) =\Bmin^T(v)
\end{equation}
and
\begin{equation}\label{eq:parametrizationB}
    G^v_{\gamma_s^T(v)} (\dot{\gamma}^T_s(v),\dot{\gamma}^T_s(v)) = 1,\quad \forall s\in[0,t],
\end{equation}
where $t = A^T(v).$
We note that
\begin{align}\label{eq:formulaForPartialB}
    \partial_{w_i} B(w,v,\gamma^T) = \frac{1}{2}\int_0^t \frac{\partial_{w_i}G^w_{\gamma_s^T} (\dot{\gamma}_s^T,\dot{\gamma}_s^T)}{\sqrt{G^w_{\gamma_s^T} (\dot{\gamma}_s^T,\dot{\gamma}_s^T)}}ds
\end{align}
and 
\begin{multline}\label{eq:BSecondDerivative}
    \partial_{w_j}\partial_{w_i} B(w,v,\gamma^T) = \frac{1}{2}\int_0^t \frac{\partial_{w_j w_i} G^w_{\gamma_s^T} (\dot{\gamma}_s^T,\dot{\gamma}_s^T )}{\sqrt{G^w_{\gamma_s^T} (\dot{\gamma}_s^T,\dot{\gamma}_s^T)}}ds 
    \\ - \frac{1}{4}\int_0^t \frac{\partial_{w_j} G^w_{\gamma_s^T} (\dot{\gamma}_s^T,\dot{\gamma}_s^T )\partial_{w_i} G^w_{\gamma_s^T} (\dot{\gamma}_s^T,\dot{\gamma}_s^T )}{G^w_{\gamma_s^T} (\dot{\gamma}_s^T,\dot{\gamma}_s^T)^{3/2}}ds
\end{multline}
for $i,j=1,\dots, d$. 

First we use \eqref{eq:formulaForPartialB} to derive $\langle \nabla B(v,v,\gamma^T(v)), w\rangle$ for $w\in \R^d$, which will imply the formula in \eqref{eq:RiemannianFPPDerivative} once \Cref{th:main_riemannian} is established. Recall the definition $h_x^i(p;v)=\partial_{w_i}g^{w,v}_x(p,p)\Big|_{w=v}$ from \Cref{sec:RiemannianFPPeg}.
\begin{lemma}
    For $v,w\in \R^d$ and $x\in \R^d,$
    \begin{equation}
        \langle \nabla B(v,v,\gamma^*), w\rangle = \int_0^t\Big[\frac{1}{2}\sum_{i=1}^d h_{\gamma_s^*}^{i}(\dot{\gamma}_s^*;v) w_i + \frac{\langle v,\dot{\gamma}_s^*\rangle}{|v|^2} g_{\gamma_s^*}(w,\dot{\gamma}_s^*)\Big] ds,
    \end{equation}
    where $\gamma^* := \gamma^*(0,x)\in \mathcal{S}_{0,y,t}.$
\end{lemma}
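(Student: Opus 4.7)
The plan is to start from the explicit formula \eqref{eq:formulaForPartialB} for $\partial_{w_i}B(w,v,\gamma^T)$, specialize to $w=v$ (where big simplifications occur), and then compute the $w$-derivative of $G^w_x(p,p)=g^{w,v}_x(\Xi_{v\to w}p,\Xi_{v\to w}p)$ at $w=v$ by the chain rule. Finally I will contract against an arbitrary $w\in\R^d$ to assemble the gradient identity.

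First, note that by \eqref{eq:Gg} and the parametrization \eqref{eq:parametrizationB}, we have $G^v_{\gamma_s^*}(\dot\gamma_s^*,\dot\gamma_s^*)=g_{\gamma_s^*}(\dot\gamma_s^*,\dot\gamma_s^*)=1$ for all $s\in[0,t]$. Setting $w=v$ in \eqref{eq:formulaForPartialB} therefore collapses the denominator to $1$ and gives
\[
\partial_{w_i}B(v,v,\gamma^*) \;=\; \tfrac{1}{2}\int_0^t \partial_{w_i} G^w_{\gamma_s^*}(\dot\gamma_s^*,\dot\gamma_s^*)\Big|_{w=v}\,ds.
\]
Thus the whole task reduces to evaluating $\partial_{w_i}G^w_x(p,p)|_{w=v}$.

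Next, from the definition $G^w_x(p,p)=g^{w,v}_x(\Xi_{v\to w}p,\Xi_{v\to w}p)$ and the explicit formula \eqref{eq:XiFPP}, which gives
\[
\Xi_{v\to w}p \;=\; p+\tfrac{\langle v,p\rangle}{|v|^2}(w-v),\qquad \partial_{w_i}(\Xi_{v\to w}p)\;=\;\tfrac{\langle v,p\rangle}{|v|^2}e_i,
\]
together with $\Xi_{v\to v}p=p$ and $g^{v,v}_x=g_x$, the chain rule (using bilinearity and symmetry of $g^{w,v}_x$) yields
\[
\partial_{w_i}G^w_x(p,p)\Big|_{w=v} \;=\; \underbrace{\partial_{w_i}g^{w,v}_x(p,p)\Big|_{w=v}}_{=\,h_x^i(p;v)}\;+\;2\,g_x\!\Big(\tfrac{\langle v,p\rangle}{|v|^2}e_i,\;p\Big).
\]
Here the first summand is precisely the definition of $h_x^i(p;v)$, and the second uses $g_x(\,\cdot\,,p)$ together with linearity in the first slot.

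Finally, for arbitrary $w\in\R^d$, contracting with $w=\sum_i w_i e_i$ gives
\[
\langle\nabla B(v,v,\gamma^*),w\rangle \;=\; \tfrac{1}{2}\int_0^t\sum_{i=1}^d w_i\Big[h_{\gamma_s^*}^i(\dot\gamma_s^*;v)+2\tfrac{\langle v,\dot\gamma_s^*\rangle}{|v|^2}g_{\gamma_s^*}(e_i,\dot\gamma_s^*)\Big]ds,
\]
and the sum $\sum_i w_i g_{\gamma_s^*}(e_i,\dot\gamma_s^*)$ collapses to $g_{\gamma_s^*}(w,\dot\gamma_s^*)$ by linearity in the first slot, producing the claimed identity. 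The only potential obstacle is bookkeeping the factor $\tfrac{\langle v,p\rangle}{|v|^2}$ coming from the shear $\Xi_{v\to w}$ and the combinatorial factor $2$ from differentiating a symmetric bilinear form in both slots; the $\tfrac{1}{2}$ from \eqref{eq:formulaForPartialB} absorbs this $2$, yielding the asymmetric coefficients $\tfrac{1}{2}$ and $1$ in front of the two integrands in the final formula.
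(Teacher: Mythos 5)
Your proposal is correct and follows essentially the same route as the paper: specialize \eqref{eq:formulaForPartialB} at $w=v$ using the unit-speed parametrization of the minimizer to kill the denominator, then apply the chain rule to $G^w_x(p,p)=g^{w,v}_x(\Xi_{v\to w}p,\Xi_{v\to w}p)$ with $\partial_{w_i}\Xi_{v\to w}p=\frac{\langle v,p\rangle}{|v|^2}e_i$, and contract with $w$. The bookkeeping of the factor $2$ from the symmetric bilinear form against the $\frac12$ in \eqref{eq:formulaForPartialB} matches the paper's computation exactly.
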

\begin{proof}
    Note that \eqref{eq:formulaForPartialB} and \eqref{eq:gammaStarSpeed} imply 
    \begin{equation}\label{eq:BDerivStep1}
        \partial_{w_i} B(v,v,\gamma^*) = \frac{1}{2}\int_0^t \partial_{w_i}G^v_{\gamma_s^*}(\dot{\gamma}_s^*,\dot{\gamma}_s^*)ds.
    \end{equation}
    Also, 
    \begin{equation}
        \partial_{w_i}\Xi_{v\to w} p = \frac{\langle v,p\rangle}{|v|^2}e_i,
    \end{equation}
    where $e_i\in \R^d$ is the vector with $0$ in all coordinates $j\neq i$ and $1$ in coordinate $i$. Thus,
    \begin{align*}
        \partial_{w_i}G^w_{x}(p,p) & = \partial_{w_i}(g^{w,v}_x(\Xi_{v\to w} p,  \Xi_{v\to w} p)\\
        & = (\partial_{w_i} g^{w,v}_x)(\Xi_{v\to w} p,  \Xi_{v\to w} p) + 2 g_x^{w,v}\Big(\frac{\langle v,p\rangle}{|v|^2}e_i, p\Big).
    \end{align*}
    The formula \eqref{eq:BDerivStep1}, the identity $\Xi_{v\to v} = I,$ and the above imply that
    \begin{align*}
        \langle \nabla B(v,v,\gamma^*), w\rangle &= \int_0^t \Big[\frac{1}{2}\sum_{i=1}^d h_{\gamma^*_s}^{i}(\dot{\gamma}^*_s;v) w_i +\sum_{i=1}^d \frac{\langle v,\dot{\gamma}_s^*\rangle}{|v|^2}  g_{\gamma_s^*}(e_i,\dot{\gamma}_s^*) w_i\Big]ds\\
        & = \int_0^t \Big[\frac{1}{2}\sum_{i=1}^d h_{\gamma^*_s}^{i}(\dot{\gamma}^*_s;v) w_i +\frac{\langle v,\dot{\gamma}_s^*\rangle}{|v|^2}  g_{\gamma_s^*}(w,\dot{\gamma}_s^*) \Big]ds
    \end{align*}
    completing the proof.
\end{proof}

For a measurable set $S\subset \R^d$ and a path $\gamma$ let $\tau_S(\gamma) = \int_0^t \1_{\gamma_s\in S}ds.$ For $k\in \Z^d$, let 
\begin{equation}
    \label{eq:cubes}
I_k = k + [0,1)^d.
\end{equation}
We will also need the Euclidean length of the path $\gamma\in \SP_{*,*,t}$ defined as 
 \begin{equation}\label{eq:euclideanLength}
     \Length(\gamma) = \int_0^t |\dot{\gamma}_s|ds.
 \end{equation}

The following lemma is an application of the theory of greedy lattice animals (see, e.g., \cite{10.1214/aoap/1177005277}) to bounding actions of continuous paths. Its proof can be found in \Cref{sec:proofs_of_lemmas}.
\begin{lemma}\label{lem:abstractIntegralBound}
    Let $Z$ be a random function that satisfies the conditions of \ref{boundedY}
     for some $\beta>2d$. Let $(X_k)_{k\in \Z^d}$ be a collection satisfying the following conditions:
    \begin{enumerate}
        \item $X$ is stationary with respect to lattice shifts, meaning for every $a\in \Z^d,$ $(X_{x+a})_{x\in \Z^d}$ is equal in distribution to $(X_x)_{x\in \Z^d}$.
        \item $X$ has finite range dependence on the lattice.
        \item $\E[|X_0|^{\beta}] < \infty$ for some $\beta > 2d.$
    \end{enumerate}
    Let 
    \[\Gamma_X := \{\gamma\in \mathcal{S}_{0,\ast,\ast}\,:\,\forall k\in \Z^d,\,\tau_{I_k}(\gamma)\le X_k\}.\]
    Then, with probability one, 
    \begin{equation}
        \sup_{\gamma\in \Gamma_X}
        \frac{\int_0^{t(\gamma)} Z(\gamma_s)ds}{\Length(\gamma)+1} < \infty.
    \end{equation}
\end{lemma}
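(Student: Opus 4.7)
My plan is to reduce the bound to a greedy lattice animal estimate. First, for each $k\in\Z^d$ set $Z_k := \sup_{x\in I_k} Z(x)$; by \ref{boundedY} this lattice field is stationary, finite-range dependent, and has $\E[Z_0^{\beta_Z}] < \infty$ for some $\beta_Z > 2d$. Decomposing $\int_0^{t(\gamma)} Z(\gamma_s)\,ds$ over the partition $(I_k)_{k\in\Z^d}$ of $\R^d$ and using the constraint $\tau_{I_k}(\gamma)\le X_k$ defining $\Gamma_X$ gives
\begin{equation*}
    \int_0^{t(\gamma)} Z(\gamma_s)\,ds \le \sum_{k\in\Pi(\gamma)} Z_k\,\tau_{I_k}(\gamma) \le \sum_{k\in\Pi(\gamma)} Z_k X_k,
\end{equation*}
where $\Pi(\gamma) := \{k\in\Z^d : \tau_{I_k}(\gamma)>0\}$. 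I will also need the geometric estimate $|\Pi(\gamma)|\le C_d(\Length(\gamma)+1)$: every cube in $\Pi(\gamma)$ has its center within Euclidean distance $\sqrt{d}/2$ of the image of $\gamma$, so all such centers lie in the $\sqrt{d}/2$-tubular neighborhood of $\gamma$, which has Lebesgue volume at most $c_d(\Length(\gamma)+1)$ by the standard tube estimate, and cube centers are $1$-separated. Since $\gamma$ is continuous and starts at the origin, $\Pi(\gamma)$ is moreover a connected lattice animal containing $0$.

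Second, set $W_k := Z_k X_k$. By joint stationarity and finite range of dependence of the underlying fields, $(W_k)_{k\in\Z^d}$ inherits both properties. Since $\beta_Z,\beta_X > 2d$, choosing $\alpha\in\bigl(d,\tfrac{1}{2}\min(\beta_Z,\beta_X)\bigr)$ and applying Cauchy--Schwarz yields
\begin{equation*}
    \E[W_0^{\alpha}] \le \E[Z_0^{2\alpha}]^{1/2}\E[X_0^{2\alpha}]^{1/2} < \infty,
\end{equation*}
giving a finite $\alpha$-th moment for some $\alpha > d$. I then invoke the greedy lattice animal theorem of Cox--Gandolfi--Griffin--Kesten~\cite{10.1214/aoap/1177005277} (extended from i.i.d.\ to finite-range dependent weights by partitioning $\Z^d$ into finitely many translates of a sublattice with mesh exceeding the dependence range) to obtain an a.s.\ finite random constant $M$ such that $\sum_{k\in\Pi} W_k \le M(|\Pi|+1)$ for every connected lattice animal $\Pi\ni 0$. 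Combining with the cube-count estimate gives, uniformly in $\gamma\in\Gamma_X$,
\begin{equation*}
    \frac{\int_0^{t(\gamma)} Z(\gamma_s)\,ds}{\Length(\gamma)+1} \le \frac{M(|\Pi(\gamma)|+1)}{\Length(\gamma)+1} \le M(C_d+1),
\end{equation*}
the desired a.s.\ bound.

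The main obstacle, and the reason the hypothesis demands $\beta > 2d$ rather than $\beta > d$, is precisely the Cauchy--Schwarz step: the greedy lattice animal theorem requires a moment of order strictly above $d$ on the product $W=ZX$, and optimizing over H\"older exponents shows that when $Z$ and $X$ have comparable moment orders the best one can extract from individual moment bounds costs a factor of $2$ in the required exponent. A minor secondary technical point is verifying that $\Pi(\gamma)$ is a lattice animal of linear-in-$\Length(\gamma)$ size, which is handled by the tube-volume estimate above.
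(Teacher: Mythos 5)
Your overall strategy (discretize via $Z_k=\sup_{x\in I_k}Z(x)$, use the constraint $\tau_{I_k}(\gamma)\le X_k$, count visited cubes linearly in $\Length(\gamma)$, and finish with a greedy lattice animal bound) is the same as the paper's, but the way you feed the two fields into the lattice-animal theorem has a genuine gap. You form the product field $W_k=Z_kX_k$ and assert that it ``inherits'' stationarity and finite range of dependence ``by joint stationarity and finite range of dependence of the underlying fields.'' The lemma, however, assumes these properties only for $Z$ and for $(X_k)$ \emph{separately}; no joint distributional assumption on the pair $(Z,X)$ is made. Two individually stationary, finite-range fields need not have a stationary product: for instance on $\Z$, with $(\epsilon_k)$ i.i.d.\ symmetric signs, $Z_k=\epsilon_{-k}$ and $X_k=\epsilon_k$ are each i.i.d., yet $W_k=\epsilon_k\epsilon_{-k}$ satisfies $W_0\equiv 1$ while $W_k$ is a nondegenerate sign for $k\ne 0$, so $W$ is not stationary (and joint finite-range dependence can fail similarly). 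Thus the greedy lattice animal theorem cannot be applied to $W$ under the stated hypotheses. The paper avoids this by applying Cauchy--Schwarz \emph{pathwise} to the sum, $\sum_{k}Z_kX_k\le\bigl(\sum_k Z_k^2\bigr)^{1/2}\bigl(\sum_k X_k^2\bigr)^{1/2}$ over the visited cubes, and then invoking the lattice-animal bound twice, once for $(Z_k^2)$ and once for $(X_k^2)$; each application uses only the marginal law of one field, and the hypothesis $\beta>2d$ is exactly what makes $Z_k^2$ and $X_k^2$ have moments of order $>d$. (In the paper's application $Z$ and $X$ are both built from the same field $\YField$, so your product field would in fact be stationary and finite-range there, but as a proof of the lemma as stated the step is unjustified; your moment-level Cauchy--Schwarz $\E[W_0^\alpha]\le\E[Z_0^{2\alpha}]^{1/2}\E[X_0^{2\alpha}]^{1/2}$ is fine and is not the issue.)

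Two smaller points. First, the set of visited cubes is only $\ast$-connected (diagonal adjacencies can occur when the path crosses near a lattice point), whereas the Cox--Gandolfi--Griffin--Kesten/Martin lattice-animal theorems are stated for nearest-neighbor animals; one needs the routine enlargement of a $\ast$-connected set to an $\ell^1$-connected set of size at most $2^d$ times larger, which the paper carries out in its Proposition on lattice animals. Your sublattice-decomposition remark for handling finite-range dependence is correct and matches the paper. Second, your tube-volume argument for $|\Pi(\gamma)|\le C_d(\Length(\gamma)+1)$ is an acceptable substitute for the paper's citation of its earlier work. If you repair the main step by switching to the pathwise Cauchy--Schwarz (or, alternatively, add joint stationarity and joint finite-range dependence of $(Z,X)$ as hypotheses), the rest of your argument goes through.
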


Using \Cref{lem:abstractIntegralBound} we can now prove the following proposition, which verifies \ref{cond:M_infty} since $\|\nabla_H^2 B\| \le \|\nabla^2 B\|$.
\begin{lemma}\label{lem:FPPSecondDerivativeBound}
   For $\delta$ given by Lemma~\ref{lem:GDiffBound}, with probability one,
    \[\limsup_{T\to \infty}\frac{1}{T}\sup_{w\in H(\delta)}\|\nabla^2 B(w,v,\gamma^T)\| < \infty.\]
\end{lemma}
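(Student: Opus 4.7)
The plan is to bound $\|\nabla^2 B(w,v,\gamma^T)\|$ pointwise by a line integral of $\YField+\YField^2$ along $\gamma^T$, and then to control that integral through Lemma~\ref{lem:abstractIntegralBound}. Starting from the explicit formula~\eqref{eq:BSecondDerivative}, Lemma~\ref{lem:GDiffBound} gives $|\partial_{w_jw_i}G^w_x(p,p)|,\,|\partial_{w_i}G^w_x(p,p)|\le C\YField(x)|p|^2$ and $G^w_x(p,p)\ge c|p|^2$. The two integrands in~\eqref{eq:BSecondDerivative} are therefore bounded pointwise, and uniformly in $w\in H(\delta)$, by multiples of $\YField(\gamma_s^T)|\dot\gamma_s^T|$ and $\YField(\gamma_s^T)^2|\dot\gamma_s^T|$ respectively, yielding
\[
\|\nabla^2 B(w,v,\gamma^T)\|\le C_1\int_0^t\bigl(\YField(\gamma_s^T)+\YField(\gamma_s^T)^2\bigr)|\dot\gamma_s^T|\,ds.
\]

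Next I would reparametrize $\gamma^T$ by Euclidean arclength, call the reparametrization $\tilde\gamma^T$ with total length $L$, so that the right-hand side becomes $C_1\int_0^L Z(\tilde\gamma^T_u)\,du$ with $Z:=\YField+\YField^2$. The lower bound $G^v\ge c|\cdot|^2$ gives $L\le \Amin^T(v)/\sqrt c$, and Theorem~\ref{thm:shapeFunction} forces $L=O(T)$ almost surely. It then suffices to show $\int_0^L Z(\tilde\gamma^T_u)\,du=O(L)$. The field $Z$ inherits stationarity and finite range dependence from $\YField$, and $\sup_{[0,1]^d}Z$ has finite $p$-th moment for every $p<\beta/2$; the assumption $\beta>4d$ then leaves room for a choice $\beta'\in(2d,\beta/2)$, so $Z$ fits the hypotheses of Lemma~\ref{lem:abstractIntegralBound}.

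The main obstacle is verifying $\tilde\gamma^T\in\Gamma_X$ for an admissible field $X$, i.e.\ bounding the Euclidean length of $\gamma^T$ inside each unit cube $I_k$ by a stationary, finite-range, high-moment field $X_k$. For this I propose a cut-and-paste argument exploiting the global optimality of $\gamma^T$. Fix the convex enlargement $U_k:=I_k+[-1,1]^d$, and let $t_1<t_2$ be the first entry and last exit times of $\gamma^T$ into $U_k$. Replacing $\gamma^T|_{[t_1,t_2]}$ by the straight Euclidean segment from $\gamma^T_{t_1}$ to $\gamma^T_{t_2}$ produces an admissible competitor in $\SP_{0,Tv,*}$ (the segment stays in $U_k$ by convexity of $U_k$), whose Riemannian action is at most $\mathrm{diam}(U_k)\cdot\sqrt{\sup_{x\in U_k}\|g_x\|}\le C_2\sqrt{\YField_k''}$ with $\YField_k'':=\sup_{x\in U_k}\YField(x)$. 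Optimality of $\gamma^T$ then forces $A(\gamma^T|_{[t_1,t_2]})\le C_2\sqrt{\YField_k''}$; since $\gamma^T\cap I_k\subset\gamma^T|_{[t_1,t_2]}$ and the Riemannian action dominates $\sqrt c$ times Euclidean length, the Euclidean length of $\gamma^T\cap I_k$ is at most $X_k:=(C_2/\sqrt c)\sqrt{\YField_k''}$. This $X_k$ is stationary and has finite range of dependence because $\YField_k''$ is a local functional of $\YField$ on a bounded neighborhood of $I_k$, and by \ref{YMomentCondition} its moments of order up to $2\beta$ are finite, in particular of order greater than $2d$.

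Putting everything together: Lemma~\ref{lem:abstractIntegralBound} delivers $\int_0^L Z(\tilde\gamma^T_u)\,du\le C_3(L+1)$ almost surely, which combined with $L=O(T)$ and the first-paragraph estimate yields $\sup_{w\in H(\delta)}\|\nabla^2 B(w,v,\gamma^T)\|=O(T)$ almost surely, completing the proof.
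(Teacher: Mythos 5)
Your argument is correct and is essentially the paper's own proof: you bound $\|\nabla^2 B(w,v,\gamma^T)\|$ via \eqref{eq:BSecondDerivative} and Lemma~\ref{lem:GDiffBound}, verify that the geodesic lies in $\Gamma_X$ for a field $X_k$ built from local suprema of $\YField$ by comparing, via minimality, the portion of $\gamma^T$ near each cube with a straight segment, apply Lemma~\ref{lem:abstractIntegralBound}, and finish with the linear growth of $\Amin^T(v)$. The only (harmless) differences are cosmetic: you reparametrize by Euclidean arclength and cut-and-paste within the enlarged cube $U_k$, whereas the paper keeps the unit Riemannian speed parametrization (so $|\dot\gamma^T_s|\le c^{-1}$) and compares within $\overline{I_k}$ itself.
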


\begin{proof}
    In this proof, the notation $C$ refers to a nonrandom positive number that may change line by line. By \Cref{lem:GDiffBound} and \eqref{eq:BSecondDerivative}, for all $w\in H(\delta)$ we have 
    \begin{align*}
        \|\nabla^2 B(w,v,\gamma^T)\|& \le C \int_0^t \frac{|\dot{\gamma}_s^T|^2 \YField(\gamma_s^T)}{|\dot{\gamma}^T_s|}ds + C \int_0^t \frac{|\dot{\gamma}_s^T|^4 \YField^2(\gamma_s^T)}{|\dot{\gamma}_s^T|^{3}}ds\\
        & \le C \sup_{s\in [0,t]}|\dot{\gamma}_s^T|\int_0^t |\max(\YField(\gamma_s^T),1)|^2ds.
    \end{align*}
    Since \eqref{eq:parametrizationB} holds and $G^v$ is uniformly positive definite, $|\dot{\gamma}_s^T| \le c^{-1}G^v_{\gamma_s^T}(\dot{\gamma}_s^T, \dot{\gamma}_s^T) = c^{-1}$ for all $s\in [0,t].$ Thus, there is $C>0$ such that for all $T>0$, 
    \begin{equation}\label{eg:nabla2BBound}
        \nabla^2 B(w,v,\gamma^T)\le C \int_0^t \max(\YField(\gamma_s^T),1)^2ds.
    \end{equation}

    We will now bound $\int_0^t \max(\YField(\gamma_s^T),1)^2ds$ using \Cref{lem:abstractIntegralBound}. Clearly since $\YField$ follows \ref{boundedY} with $\beta> 4d$, $\max(\YField(\gamma_s^T),1)^2$ will obey \ref{boundedY} with $\beta>2d.$ The collection $(X_k)_{k\in \Z^d}$ in \Cref{lem:abstractIntegralBound} can be chosen to be 
    \begin{equation}\label{eq:XkDef}
        X_k = C\sup_{z\in I_k}|\YField(z)|^{1/2}
    \end{equation}
    for a sufficiently large constant $C$. Since $\gamma^T$ is a geodesic with unit speed, we have for all $k\in \Z^d$,
    \begin{align}\label{eq:psiTinGammaX}
        \tau_{I_k}(\gamma^T) &= \int_0^t \1_{\gamma_s^T\in I_k}ds\\
        & = \int_0^t \1_{\gamma_s^T\in I_k} \sqrt{G^v_{\gamma_s^T}(\dot{\gamma}_s^T,\dot{\gamma}_s^T)} ds\notag\\
        & \le X_k\notag
    \end{align}
    if $C$ in \eqref{eq:XkDef} is sufficiently large.
    Indeed, if we let $s_k$ and $r_k$ be, respectively, the first and last times $s$ such that $\gamma_s^T\in \overline{I_k}$, then
    \[\int_{s_k}^{r_k} \sqrt{G^v_{\gamma_s^T}(\dot{\gamma}_s^T,\dot{\gamma}_s^T)} ds = \inf_{\gamma:\gamma_{s_k}^T\to \gamma_{r_k}^T}\int_0^t \sqrt{G^v_{\gamma_s} (\dot{\gamma}_s,\dot{\gamma}_s )}ds.\]
    The right-hand side is bounded above by $\sup_{x,y\in I_k}|x-y|\sup_{z\in I_k}\|G^v_z\|^{1/2}$ because the path $\gamma$ that is linear between $\gamma^T_{s_k}$ and $\gamma_{t_k}^T$ is admissible. We can then conclude  by \Cref{lem:GDiffBound} that \eqref{eq:psiTinGammaX} holds with $X_k$ defined in \eqref{eq:XkDef} for some $C$.

    Inequality \eqref{eq:psiTinGammaX} implies that $\gamma^T$ is in the set $\Gamma_X$ defined in \Cref{lem:abstractIntegralBound}. So, \Cref{lem:abstractIntegralBound} implies that 
    \begin{equation}\label{eq:supLengthBound}
        \sup_{T > 0}\frac{\int_0^t |\max(\YField(\gamma_s^T),1)|^2ds}{\Length(\gamma^T)+1} <\infty.
    \end{equation}
    Because $\sup_{s\in [0,t]}|\dot{\gamma}^T_s| \le c^{-1}$, 
    \begin{equation}
        \Length(\gamma^T)  = \int_0^t |\dot{\gamma}_s^T| ds \le c^{-1} t = c^{-1}\Amin(\gamma^T) = c^{-1}\Amin^T(v).
    \end{equation}
    Since $\Amin(0,x) \le \int_0^1 \sqrt{g_{sx}(x,x)}ds \le |x|\int_0^1 \|g_{sx}\|^{1/2}ds$,  ergodicity of $g$ with respect to spatial shifts implies that $\limsup_{T\to \infty}\frac{\Amin^T(v)}{T} < \infty$ with probability one. Thus,
    \begin{equation}\label{eq:lengthLinearGrowth}
        \limsup_{T\to \infty}\frac{\Length(\gamma^T)}{T}< \infty
    \end{equation}
    with probability one. Displays \eqref{eq:lengthLinearGrowth}, \eqref{eq:supLengthBound}, and \eqref{eg:nabla2BBound} imply \Cref{lem:FPPSecondDerivativeBound}.
\end{proof}

\subsection{Condition \ref{cond:for-finiteness}}
\label{sec:checking-cond-for-finiteness}
Since $\Amin(0,x) = \Amin(x,0)$ for all $x\in \R^d$ and all $\omega\in \Omega$, it suffices to prove \eqref{eq:linearGrowthProb}.  The latter is implied by the following lemma: 
\begin{lemma}\label{lem:linearGrowthRiemannianFPP}
    With probability one,
    \begin{equation}\label{eq:ActionLinearGrowthRiemannian}
        \sup_{x\in \R^d}\frac{\Amin(0,x)}{|x|+1} < \infty.
    \end{equation}
\end{lemma}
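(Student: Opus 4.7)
The plan is to upper bound $\Amin(0,x)$ by the action of the unit-speed straight line from $0$ to $x$ and then control the resulting integral uniformly in $x$ by invoking \Cref{lem:abstractIntegralBound}. For $x\neq 0$ set $\gamma^x_s=sx/|x|$, $s\in[0,|x|]$, so that $\gamma^x\in\SP_{0,x,|x|}$ has $|\dot\gamma^x_s|\equiv 1$ and $\Length(\gamma^x)=|x|$. Specializing \ref{gBound} to $w=v$ gives $g^{v,v}_x=g_x$ and hence $\|g_x\|\le \YField(x)$ for every $x$ and $\omega$, so
\begin{equation*}
    \Amin(0,x)\le A(\gamma^x)=\int_0^{|x|}\sqrt{g_{\gamma^x_s}(\dot\gamma^x_s,\dot\gamma^x_s)}\,ds\le \int_0^{|x|}\YField^{1/2}(\gamma^x_s)\,ds.
\end{equation*}

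I will apply \Cref{lem:abstractIntegralBound} with $Z:=\YField^{1/2}$ and with the deterministic constant collection $X_k\equiv \sqrt d$. The field $Z$ inherits stationarity and finite-range dependence from $\YField$ via \ref{Ystationary}--\ref{YFiniteRange}, while the moment condition \ref{YMomentCondition} with exponent $\beta>4d$ yields the required moment for $Z$ with exponent $2\beta>2d$. The constant collection $(X_k)$ trivially satisfies the three hypotheses placed on $(X_k)$. Moreover, since $|\dot\gamma^x_s|\equiv 1$ and each cube $I_k$ has diameter $\sqrt d$, we have $\tau_{I_k}(\gamma^x)\le \sqrt d$ for every $k\in\Z^d$, so $\gamma^x\in \Gamma_X$.

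\Cref{lem:abstractIntegralBound} therefore furnishes an almost surely finite random constant $C(\omega)$ such that, simultaneously for all $x\in\R^d\setminus\{0\}$,
\begin{equation*}
    \int_0^{|x|}\YField^{1/2}(\gamma^x_s)\,ds\le C(\omega)(\Length(\gamma^x)+1)=C(\omega)(|x|+1),
\end{equation*}
and the case $x=0$ is handled by $\Amin(0,0)=0$. Combining this with the previous display gives \eqref{eq:ActionLinearGrowthRiemannian}.

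No serious obstacle remains: the entire content is packaged inside \Cref{lem:abstractIntegralBound} (which itself rests on greedy lattice animal estimates). The one point worth emphasizing is that using \Cref{lem:abstractIntegralBound} with a \emph{deterministic} constant $X_k$ is legitimate precisely because straight lines have uniformly bounded occupation time in each unit cube; this is what allows a single bound to cover all $x$ at once.
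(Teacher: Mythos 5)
Your proof is correct and follows essentially the same strategy as the paper: bound $\Amin(0,x)$ by the action of the straight segment, dominate $\sqrt{g}$ by $\YField^{1/2}$ via \ref{phiwDiffCondition} with $w=v$, and control the resulting integral uniformly in $x$ by a greedy lattice-animal estimate. The only difference is packaging: you route the uniform bound through \Cref{lem:abstractIntegralBound} with the deterministic choice $X_k\equiv\sqrt d$ (legitimate, since constants trivially satisfy its hypotheses and straight lines spend at most $\sqrt d$ units of time in each unit cube), whereas the paper applies \Cref{prop:latticeAnimalsFiniteRange} directly to the cubes visited by the segment.
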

\begin{proof} 
    We will apply \Cref{prop:latticeAnimalsFiniteRange}. For $x\in \R^d$ let $\gamma^x\in \SP_{0,x,|x|}$ denote the path $\gamma^x_s = \frac{sx}{|x|}.$ Let $K(x)$ denote those $k\in \Z^d$ such that $\gamma^x\cap I_k\ne \emptyset$. Define $X_k = \sup_{z\in I_k}|\YField(z)|^{1/2}.$ The collection $(X_k)_{k\in \Z^d}$ satisfies the conditions of \Cref{prop:latticeAnimalsFiniteRange} due to \ref{boundedY}. Then, 
    \begin{multline*}
        \Amin(0,x) \le \int_0^{|x|}\sqrt{g_{\gamma^x_s}(\dot{\gamma}^x_s,\dot{\gamma}^x_s)}ds
         \le \int_0^{|x|} \|g_{\frac{sx}{|x|}}\|^{1/2}ds
         \\
        \le \sum_{k\in K(x)}\int_0^{|x|}X_k \1_{\frac{sx}{|x|}\in I_k}ds
         \le \sqrt{d}\sum_{k\in K(x)}X_k.
    \end{multline*}
    In the last line we use the fact that $\int_0^{|x|}\1_{\frac{sx}{|x|}\in I_k}ds \le \sqrt{d}$ for all $x\in \R^d$ and $k\in \Z^d.$ Also, there is $C>0$ such that for all $x\in \R^d,$ $|K(x)|\le C |x|+C$, where $|K(x)|$ denote the number of elements in the finite set $K(x).$ Thus,
    \[\sup_{x\in \R^d}\frac{\Amin(0,x)}{|x|+1}\le \sup_{x\in \R^d}\frac{C\sqrt{d}}{|K(x)|}\sum_{k\in K(x)}X_k.\]
    Note also that $K(x)$ is a $\ast$-connected set as defined 
    in~\Cref{sec:proofs_of_lemmas}. \Cref{prop:latticeAnimalsFiniteRange} implies that the right-hand side of the above is finite with probability one, and so our proof of \Cref{lem:linearGrowthRiemannianFPP} is complete.
\end{proof}

\subsection{Proof of \Cref{prop:measurableSelectionFPP}}\label{sec:checkingConditionsFPP}

We will appeal to an abstract measurable selection lemma, which we first state below. See \Cref{sec:proofs_of_lemmas} for its proof. Recall if $Y$ is a Banach space then the weak topology on $Y$ refers to the topology induced by maps $f:Y\to \R$ in the dual space $Y^*.$ A ball $B$ in a Banach space $Y$ refers to a set of the form $\{y\in Y\,:\,\|y-y_0\|_Y<r\}$ for $y_0\in Y$ and $r>0.$ The notation $\overline{B}$ refers to the closure of the ball.

\begin{lemma}\label{lem:measurableSelectionLem}
    Let $(X,\mathcal{F})$ be a measurable space and $(Y, \|\cdot\|_Y)$ be a separable Banach space. Suppose $F:X\times Y\to \R\cup\{\infty\}$ satisfies the following.
    \begin{enumerate}
        \item\label{seqCompact} For every $x\in X$ and $R\in \R$ the set $F^{-1}(x;R):= \{y\in Y\,:\,F(x,y) \le R\}$ is weakly compact and, additionally, the set $\{y\in Y\,:\,F(x,y) < \infty\}$ is non-empty.
        \item\label{countableSubset} There is a countable subset $\mathcal{G}\subset X$ such that for all balls $B\subset Y$, $x\in X$, $R<\infty$, and $\epsilon > 0$ there is $x' = x'(B,x,R,\epsilon)\in \mathcal{G}$ such that 
        \[|F(x,y) - F(x',y)| < \epsilon\] 
        for all $y\in \overline{B}$ satisfying either $F(x,y) \le R$ or $F(x',y) \le R.$
        \item\label{measurability} For every $y\in Y$ the map $x\mapsto F(x,y)$ is measurable. 
    \end{enumerate}
    Then, there is a measurable function $f:(X,\mathcal{F})\to (Y,\|\cdot\|)$ satisfying
    \begin{equation}\label{eq:measurableSelection}
        F(x,f(x)) = \inf_{y\in Y}F(x,y)
    \end{equation}
    for all $x\in X.$
\end{lemma}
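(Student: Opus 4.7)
My plan consists of three stages: first, show that $m(x) := \inf_{y \in Y} F(x,y)$ is attained for each $x \in X$; second, prove that $m$ is Borel measurable; third, apply an abstract measurable selection theorem to the minimizer multifunction $\Phi(x) := \{y \in Y : F(x,y) = m(x)\}$.

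Stage one is straightforward. Any minimizing sequence for $F(x,\cdot)$ eventually lies in a weakly compact sublevel set by \ref{seqCompact}, hence has a weakly convergent subsequence; weak closedness of the sublevel sets yields weak lower semicontinuity of $F(x,\cdot)$, so the weak limit is a minimizer. This shows that $\Phi(x)$ is non-empty and, being a weakly closed subset of a norm-bounded set, norm closed.

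For stage two, I write $m(x) = \inf_n m_n(x)$ with $m_n(x) := \inf\{F(x,y) : y \in \overline{\Ball(0,n)}\}$, which is permissible because the sublevel sets of $F(x,\cdot)$ are weakly compact, hence norm bounded. Hypothesis \ref{countableSubset} then supplies, for each $x$ and each $\epsilon > 0$, some $x' \in \mathcal{G}$ with $|F(x,y)-F(x',y)|<\epsilon$ on the portion of $\overline{\Ball(0,n)}$ lying inside the sublevel sets of $F(x,\cdot)$ or $F(x',\cdot)$. Combining this with \ref{measurability} applied to a countable norm-dense $D \subset Y$, one can express each $m_n$ as a countable supremum over $\epsilon\downarrow 0$ of countable infima of quantities that are Borel in $x$, so $m_n$ and therefore $m$ are Borel. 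For stage three, the fattened multifunctions $\Phi_\epsilon(x) := \{y : F(x,y) \le m(x)+\epsilon\}$ are non-empty weakly compact sets with $\Phi(x) = \bigcap_{\epsilon > 0}\Phi_\epsilon(x)$. A parallel argument using \ref{countableSubset} and \ref{measurability} upgrades pointwise measurability of $F(\cdot,y)$ to joint measurability of $F$ on each slab $X \times \overline{\Ball(0,n)}$, making the graph of each $\Phi_\epsilon$ a Borel subset of $X \times Y$; intersection over $\epsilon = 1/k$ then gives a Borel graph for $\Phi$. A standard measurable selection theorem for closed-valued multifunctions in a Polish space (Kuratowski--Ryll-Nardzewski, with $Y$ in its norm topology) delivers a measurable $f \colon X \to Y$ with $f(x) \in \Phi(x)$, establishing \eqref{eq:measurableSelection}.

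The main obstacle is stage two: weak lower semicontinuity of $F(x,\cdot)$ does not allow one to compute $m(x)$ as a countable infimum over a norm-dense subset of $Y$, because norm-convergence $y_j \to y^*$ gives only $F(x,y^*) \le \liminf F(x,y_j)$, not the reverse inequality. Hypothesis \ref{countableSubset} is designed precisely to circumvent this by trading $y$-approximation for $x$-approximation along the countable family $\mathcal{G}$; threading the ``either-sublevel'' restriction of \ref{countableSubset} through the measurability computation is the most delicate technical point of the proof.
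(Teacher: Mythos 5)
Your stage one (existence and norm-closedness of the argmin set) is fine and matches the paper, but the rest of the plan has a genuine gap, concentrated in stage three. Even if you had joint measurability of $F$ on each slab $X\times\overline{\Ball(0,n)}$ and hence an $\mathcal{F}\otimes\mathcal{B}(Y)$-measurable graph for $\Phi$, the Kuratowski--Ryll-Nardzewski theorem would not apply: it requires the correspondence to be \emph{weakly measurable}, i.e.\ $\{x: \Phi(x)\cap U\neq\emptyset\}\in\mathcal{F}$ for every open $U\subset Y$, and over an arbitrary measurable space $(X,\mathcal{F})$ graph measurability does not imply this. Passing from a measurable graph to a measurable selection is a projection/Aumann--von Neumann type result and needs completeness of $\mathcal{F}$ with respect to a measure, or a standard Borel/analytic-set framework, none of which the lemma assumes. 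Moreover, the joint measurability you invoke is itself unsupported: condition \ref{countableSubset} supplies, for each $x$, some $x'\in\mathcal{G}$ approximating $F(x,\cdot)$ on the sublevel portion of a closed ball, but the assignment $x\mapsto x'$ is not measurable and the approximation statement carries an uncountable quantifier over $y\in\overline{B}$, so it cannot be threaded into product measurability of $F$; condition \ref{measurability} only gives measurability of the sections $F(\cdot,y)$ for countably many \emph{fixed} $y$.

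Your stage two is also off as written: as you yourself note, lower semicontinuity does not let you compute $m_n(x)$ as an infimum over a norm-dense countable $D\subset Y$, and condition \ref{countableSubset} does not repair an infimum over such a $D$. The countable family of test points that actually works is not a dense subset of $Y$ but the set of approximate minimizers $y_{x',B}$ of $F(x',\cdot)$ over closed balls $\overline{B}$ from a countable basis, with $x'\in\mathcal{G}$: for these fixed points, condition \ref{measurability} makes $x\mapsto F(x,y_{x',B})$ measurable, and condition \ref{countableSubset} (applied twice, once at a minimizer of $F(x,\cdot)$ and once at $y_{x',B}$) shows that $\inf_{x',B}F(x,y_{x',B})=m(x)$. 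This is precisely the device in the paper's proof, but there it is used for the essential step your outline lacks: proving weak measurability of the argmin correspondence directly, via the countable Boolean identity \eqref{eq:UpsiMeasurable} built from the measurable comparison sets $C(k,y_{x',B},y')$ of \eqref{eq:CkyyDef}, after which Kuratowski--Ryll-Nardzewski legitimately applies. Without a substitute for that step, your argument does not yield the selection.
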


\begin{proof}[Proof of \Cref{prop:measurableSelectionFPP}]
    Let $X_0$ be the space of those $g\in C^2_{loc}(\R^d;\mathcal{M}^d_+)$ that satisfy $g_x(p,p)\ge \lambda |p|^2$ for all $x,p\in \R^d.$ We will apply \Cref{lem:measurableSelectionLem} with $X = \R^d\times \R^d\times X_0$ and $Y = \{h\in L^1([0,1];\R^d)\,:\,\int h = 0\}$ (equipped with the $L^1$ norm). 
    Given $(x,y)\in \R^d\times \R^d$ and $h\in Y$ we let $\gamma = \gamma[x,y,h]\in \SP_{x,y,1}$ be given by $\gamma_s = (1-s) x + sy + \int_0^s h(r)dr$. 
     Note that the map $(x,y,h)\mapsto \gamma[x,y,h]$ is continuous and $\dot{\gamma}_s[x,y,h] = y-x + h_s$.

     Let us define the ``energy functional'' $F:X\times Y\to \R \cup\{\infty\}$  by
    \[F(x,y,g,h) = \int_0^1 g_{\gamma_s}(\dot{\gamma}_s,\dot{\gamma}_s)ds,\]
    where $\gamma = \gamma[x,y,h]$.
    
    We claim that to prove part \eqref{it:min-seclection} of \Cref{prop:measurableSelectionFPP}, it suffices to prove the existence of a measurable selection $f:X\to Y$ satisfying 
    \begin{equation}\label{eq:defOff}
        F(x,y,g,f(x,y,g)) = \inf_{h\in Y}F(x,y,g,h).
    \end{equation}
    To see this, we first let $\tilde{\gamma}$ denote the mapping $(x,y,g)\mapsto {\gamma}[x,y,f(x,y,g)]$, which is also measurable if $f$ is. Suppose $(x,y,g)\in X$. 
    By Lemma 2.3 in Chapter 9 of \cite{alma990020579810107876}, the energy minimizing path $\tilde{\gamma}=\tilde{\gamma}(x,y,g)$ has constant speed: $g_{\tilde{\gamma}}(\dot{\tilde{\gamma}},\dot{\tilde{\gamma}}) \equiv c$  for some $c>0,$ and the path $\gamma^*=\gamma^*(x,y,g) \in \SP$ defined by $\gamma^*_s = \tilde{\gamma}_{c^{-1/2}s}$ minimizes
    \[\int_0^t g_{\psi_s}(\dot{\psi}_s,\dot{\psi}_s)ds,\quad t>0,\,\psi\in \SP_{x,y,t}.\]
    Additionally, $\gamma^*$ has unit Riemannian speed: $g_{\gamma^*}(\dot{\gamma}^*,\dot{\gamma}^*) \equiv 1$. Due to measurability of the map $\omega\mapsto g_{\cdot,\omega}$ and measurability of the reparametrization operation $\tilde \gamma\mapsto \gamma^*$, the mapping $(x,y,\omega)\mapsto \gamma^*(x,y,\omega)$ is measurable and satisfies~\eqref{eq:gammaXYDef}. This proves our claim that it suffices to find $f$ satisfying \eqref{eq:defOff}.
    
    To prove existence of a measurable selection $f:X\to Y$ satisfying \eqref{eq:defOff},we will use \Cref{lem:measurableSelectionLem}.
 
    First we verify condition \eqref{seqCompact} of \Cref{lem:measurableSelectionLem}. Because the map $p\mapsto g_x(p,p)$ is convex and nonnegative, Corollary 3.24 of \cite{dacorogna2007direct} 
      implies that the map $\gamma\mapsto \int_0^1 g_{\gamma_s}(\dot{\gamma}_s,\dot{\gamma}_s)ds$ is weakly lower semicontinuous in $W^{1,1}([0,1];\R^d).$ Additionally, by condition \ref{uniformPositiveDefinite}, the set $\{h\in Y\,:\,F(x,y,g,h)\le R\}$
    is bounded in $L^2$ norm and thus is contained in a weakly compact set.  
    It follows that $\{h\in Y\,:\, F(x,y,g,h)\le R\}$ is weakly compact for every $R>0.$ 
 
    The set $\{h\in Y\,:\, F(x,y,g,h)<\infty\}$ is nonempty since it contains the function $h\equiv 0$. This completes the proof of part~\eqref{it:min-seclection}.

    Now we establish condition \ref{countableSubset} of \Cref{lem:measurableSelectionLem}. Since the space $X_0$ is separable, we can find a countable subset $\mathcal{G}_0$ satisfying
    \begin{equation}\label{eq:GNullDeff}
        \inf_{g\in \mathcal{G}_0}\sup_{x\in K}\|g' - g\|_{C^2,x} = 0
    \end{equation}
    for all $g'\in X_0$ and all compact sets $K\subset \R^d.$ Then, let the set $\mathcal{G}$ in \Cref{lem:measurableSelectionLem} be $\Q^d\times \Q^d\times \mathcal{G}_0.$

    Fix $(x,y)\in \R^d\times \R^d$, $g\in X_0,$ and $B$ an open ball in the Banach space $Y$. There is a constant $C$ depending on $x,y,B$ such that for all $h\in B$, $\Length(\gamma[x,y,h])\le |x-y| + \|h\|_{L^1} \le C.$  So, there is a compact set $K(x,y,B)\subset \R^d$ such that 
    \begin{equation}\label{eq:defofKxyB}
        \gamma_s[x,y,h]\in K(x,y,B),\quad \forall h\in \overline{B},\forall s\in [0,1].
    \end{equation}
    Also, \ref{uniformPositiveDefinite} implies that for all $(x,y,g)\in X$ and $h\in Y$, 
    \begin{equation}\label{eg:hL2Bound}
        \|h\|_{L^2} \le |x-y| + \|\dot{\gamma}\|_{L^2} \le |x-y| + \lambda^{-1/2}F(x,y,g,h)^{1/2}.
    \end{equation}

    In the below computation we let $h\in B,$ $(x,y,g),(x',y',g')\in X$, $\gamma = \gamma[x,y,h]$ and $\gamma' = \gamma[x',y',h]$. Then,
    \begin{align}\label{eq:FandFPrimeBound}
        |F(x',y',g',&h) - F(x,y,g,h)| \le \int_0^1 |g_{\gamma'_s}'(\dot{\gamma}'_s,\dot{\gamma}'_s) - g_{\gamma_s}(\dot{\gamma}_s,\dot{\gamma}_s)|ds\notag\\
        & \le \int_0^1 |g_{\gamma'_s}'(\dot{\gamma}'_s-\dot{\gamma}_s,\dot{\gamma}'_s)| +|g'_{\gamma'_s}(\dot{\gamma}_s, \dot{\gamma}_s' - \dot{\gamma}_s)| +  |(g_{\gamma_s'}' - g_{\gamma_s})(\dot{\gamma}_s,\dot{\gamma}_s)|ds.
    \end{align}
    Let $R<\infty$ and suppose either $F(x',y',g',h) \le R$ or $F(x,y,g,h) \le R$. By \eqref{eg:hL2Bound}, $\|h\|_{L^2} \le \max(|x-y|,|x'-y'|) + \lambda^{-1/2}R^{1/2}$ and so $\|\dot{\gamma}\|_{L^2}$ and $\|\dot{\gamma}'\|_{L^2}$ are both bounded by $a := 2 \max(|x-y|,|x'-y'|) + \lambda^{-1/2}R^{1/2}.$
    
    Note that $\dot{\gamma}'_s - \dot{\gamma}_s = (x-x') + (y'-y)$. We can bound the first and second terms in the right-hand side of \eqref{eq:FandFPrimeBound} by 
    \begin{align*}
        \sup_{s\in [0,1]}\|g'_{\gamma'_s}\| \|\dot{\gamma}'-\dot{\gamma}\|_{L^2}\max(\|\dot{\gamma}'\|_{L^2}, \|\dot{\gamma}\|_{L^2}) \le a(|x-x'|+|y-y'|)\sup_{x\in K}\|g'_{x}\|,
    \end{align*}
    where $K$ is defined as the union of $K(x,y,B)$ and $K(x',y',B)$ from \eqref{eq:defofKxyB}.

    We can bound the third term of \eqref{eq:FandFPrimeBound} by 
    \begin{align*}
        \sup_{s\in [0,1]}\|g'_{\gamma_s'} &- g_{\gamma_s}\|\|\dot{\gamma}\|_{L^2}^2\le \Big(\sup_{x\in K}\|g'_x - g_x\|+ \sup_{x\in K}\|g\|_{C^1,x}\|\dot{\gamma}' - \dot{\gamma}\|_{L^\infty}\Big)\|\dot{\gamma}\|_{L^2}^2\\
        &  \le a^2 \Big(\sup_{x\in K}\|g'_x-g_x\| + \sup_{x\in K}\|g\|_{C^1,x}(|x-x'|+|y-y'|)\Big).
    \end{align*}
    Fix now $R>0,$ $x,y\in \R^d$, an open ball $B\subset Y,$ and $\epsilon > 0.$ By choosing $x',y'\in \Q^d$ sufficiently close to $x$ and $y$, respectively, and $g'\in \mathcal{G}_0$ such that $\sup_{x\in K}\|g'-g\|_{C^2,x}$ is sufficiently small, we can guarantee that \eqref{eq:FandFPrimeBound} is less than $\epsilon$ for all $y\in \overline{B}$ satisfying either $F(x',y',g',h)\le R$ or $F(x,y,g,h)\le R.$ This implies \ref{countableSubset} of \Cref{lem:measurableSelectionLem}. 
    
    Condition \ref{measurability} is satisfied because the mapping $(x,y,g)\mapsto \int_0^1 g_{\gamma_s}(\dot{\gamma}_s,\dot{\gamma}_s)ds$ is continuous for every $\gamma \in \SP_{x,y,1}$ and so the proof is complete.
\end{proof}

\section{Proofs of Lemmas from \Cref{sec:proof-of-main-riemannian}}
\label{sec:proofs_of_lemmas}

\begin{proof}[Proof of \Cref{lem:GDiffBound}]
    For every $i,j =1,\dots, d,$ and $y\in \R^d$
\begin{equation}
    \partial_{w_i}\Xi_{v\to w} y = \frac{\langle v,y\rangle}{|v|^2 }e_i,\quad \partial_{w_i,w_j}\Xi_{v\to w} y = 0.
\end{equation}
In particular, $\|\partial_{w_i}\Xi_{v\to w}\|$ and $\|\partial_{w_i,w_j}\Xi_{v\to w}\|$ are bounded by some constant depending only on $v$. Additionally, $\| \Xi_{v\to w}\|$ itself is bounded uniformly for $w$ in a neighborhood of $v$.

Using the product rule for matrices, we can derive 
\begin{align}\label{eq:GDiffBound1}
    \|\partial_{w_i}G^w_x\| &\le 2 \|\Xi_{v\to w}\| \|\partial_{w_i}\Xi_{v\to w}\| \|g^{w,v}_x\| + \|\Xi_{v\to w}\|^2 \|\partial_{w_i}g^{w,v}_x\|\notag \\
    & \le C_3 \max(\|g_x^{w,v}\|,\|\partial_{w_i}g^{w,v}_x\|)\notag\\
    & \le C_4 \YField(x)
\end{align}
for $|w-v| \le \delta$ for $\delta$ as in \ref{gBound}. Similarly, 
\begin{align}\label{eq:GDiffBound2}
    \|\partial_{w_jw_i}G^w_x\| &\le C_5 \max(\|g^{w,v}_x\|,\|\partial_{w_i}g^{w,v}_x\|, \|\partial_{w_jw_i}g^{w,v}_x\|)\notag\\
    &\le C_6 \YField(x).
\end{align}
Displays \eqref{eq:GDiffBound1} and \eqref{eq:GDiffBound2} complete our proof of \eqref{eq:GUpperBound}.

To see \eqref{eq:GLowerBound} simply note that $g^{w,v}$ itself is uniformly positive definite, and $\Xi_{v\to w}$ is invertible for every $w\in v+H$ by \Cref{lem:XiProperties}. So, for every $w\in v+H$ there is $c(w)>0$ such that for all $x,p\in \R^d,$
\[G^w_x(p,p) = g^{w,v}_x(\Xi_{v\to w}p,\Xi_{v\to w}p) \ge \lambda |\Xi_{v\to w}p|^2 \ge \lambda c(w)|p|^2.\]
The constant $c(w)$ can be chosen as the square of the smallest eigenvalue of $\Xi_{v\to w}$. Because the map $w\mapsto \Xi_{v\to w}$ is continuous, there is a $c >0$ such that $G^w_x(p,p) \ge c |p|$ for all $x,p\in \R^d$ and $w\in v+H$ satisfying $|v-w|\le 1,$ and so the lemma is proved.
\end{proof}

To prove \Cref{lem:abstractIntegralBound} we use a greedy lattice animal estimate extending implied by the results in \cite{10.1214/aoap/1177005277} and \cite{MR1884923}. We can consider $\Z^d$ as a graph where $x$ is connected to $y$ whenever 
\[\max_{i=1,\dots, d}|x_i - y_i| = 1.\]
We say that $A\subset \Z^d$ is $\ast$-connected whenever it is a connected component of the aforementioned graph. We let $\mathcal{C}(n)$ denote the set of all $\ast$-connected subsets of $\Z^d$ with $n$ elements containing the origin.

\begin{proposition}\label{prop:latticeAnimalsFiniteRange}
    Let $(X_k)_{k\in\Z^d}$ be a collection of nonnegative random variables obeying the following conditions:
    \begin{enumerate}
        \item $(X_k)_{k\in\Z^d}$ is stationary with respect to lattice shifts, meaning for every $a\in \Z^d,$ $(X_{k+a})_{k\in \Z^d}$ is equal in distribution to $(X_k)_{k\in \Z^d}$.
        \item $(X_k)_{k\in\Z^d}$ has finite range dependence on the lattice.
        \item\label{item:cdfBound} $\int_0^\infty(1-F(x))^{1/d}dx < \infty,$ where $F$ is the cdf of $X_0$.
    \end{enumerate}
    Then,
    \begin{equation}\label{eq:latticeAnimalsFinite}
        \sup_{n\in \N}\frac{1}{n}\max_{A\in \mathcal{C}(n)}\sum_{k\in A}X_k  <\infty.
    \end{equation}
\end{proposition}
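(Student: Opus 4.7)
The plan is to extend the classical greedy lattice animal theorem of Martin \cite{MR1884923} (building on \cite{10.1214/aoap/1177005277}), which is stated for i.i.d.\ fields, to the stationary finite-range-dependent setting via a standard blocking reduction followed by a sublattice coloring that brings us back to the i.i.d.\ case.

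Let $R$ denote the range of dependence and fix an integer $M > R$. For $k \in \Z^d$ set $B_k = Mk + \{0,1,\ldots,M-1\}^d$ and define the block sum
\[
Z_k := \sum_{j \in B_k} X_j.
\]
The family $(Z_k)_{k \in \Z^d}$ is stationary, and if $|k_1 - k_2|_\infty \ge 2$ then the boxes $B_{k_1}, B_{k_2}$ have mutual $\ell^\infty$-distance at least $M > R$, so $Z_{k_1}$ and $Z_{k_2}$ are independent; equivalently, $(Z_k)$ is $1$-dependent. For each $A \in \mathcal{C}(n)$, the set $K(A) := \{k \in \Z^d : B_k \cap A \neq \emptyset\}$ contains the origin, has $|K(A)| \le n$, and is $\ast$-connected (since $\ast$-adjacency of $j_1, j_2 \in A$ forces $\ast$-adjacency or equality of their enclosing box indices); moreover $\sum_{j \in A} X_j \le \sum_{k \in K(A)} Z_k$. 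Hence it suffices to prove \eqref{eq:latticeAnimalsFinite} for the $1$-dependent field $Z$. The tail hypothesis transfers correctly: from $Z_0 \le M^d \max_{j \in B_0} X_j$ and the union bound, $1 - F_{Z_0}(x) \le M^d (1 - F(x/M^d))$, so a change of variables gives $\int_0^\infty (1 - F_{Z_0}(x))^{1/d} dx \le M^{d+1} \int_0^\infty (1 - F(y))^{1/d} dy < \infty$.

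To conclude for the $1$-dependent field $(Z_k)$, I would use the partition of $\Z^d$ into the $2^d$ sublattices $L_\epsilon = 2\Z^d + \epsilon$, $\epsilon \in \{0,1\}^d$. On each $L_\epsilon$ the pairwise spacing is at least $2$, so by $1$-dependence and stationarity $(Z_k)_{k \in L_\epsilon}$ is i.i.d., and for any $K \in \mathcal{C}(n)$,
\[
\sum_{k \in K} Z_k \;=\; \sum_{\epsilon \in \{0,1\}^d} \sum_{k \in K \cap L_\epsilon} Z_k.
\]
For each $\epsilon$, I would enclose the scaled image $\phi_\epsilon(K \cap L_\epsilon)$ (where $\phi_\epsilon(2k+\epsilon) = k$) inside a $\ast$-connected subset $\tilde{K}_\epsilon \subset \Z^d$ of size $\le C(d) n$, by tracing a spanning tree of $K$ and adding, along each edge, the at most one new scaled-lattice vertex needed to keep the image $\ast$-connected; since the spanning tree has $n-1$ edges, this adds at most $O(n)$ vertices. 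Martin's i.i.d.\ greedy lattice animal theorem \cite{MR1884923}, applied on the scaled copy of $\Z^d$ with i.i.d.\ weights $(Z_{\phi_\epsilon^{-1}(\cdot)})$, together with the moment/tail condition verified above, then bounds $\sum_{k \in K \cap L_\epsilon} Z_k \le \sum_{k \in \tilde{K}_\epsilon} Z_k \lesssim n$ a.s.\ uniformly in $K$ and $n$, and summing over the finitely many $\epsilon$ yields \eqref{eq:latticeAnimalsFinite}.

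The main obstacle I anticipate is Step~3, the passage from a $1$-dependent field on $\Z^d$ to i.i.d.\ fields on each sublattice while preserving the connectedness structure required by Martin's theorem. The enclosure construction sketched above must be carried out carefully (the naive intersection $\phi_\epsilon(K \cap L_\epsilon)$ can fail to be $\ast$-connected when $K$ contains long detours through the complement of $L_\epsilon$), but since at most $n-1$ extra scaled-lattice vertices suffice, the enclosing animal has cardinality comparable to $n$. An alternative that sidesteps the geometric issue entirely is to observe that the classical proof of Martin's theorem (truncation, a union bound over the $\le c(d)^n$ animals of size $n$ containing the origin, and a tail estimate for $\sum_{k \in A} Z_k$ along a \emph{fixed} animal $A$) extends to the $1$-dependent setting with only quantitative changes, since independence is used only in the last step and can be replaced by a Bernstein-type estimate for $1$-dependent variables.
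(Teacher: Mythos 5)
Your proposal is correct in substance and rests on the same core reduction as the paper: project lattice animals onto sparser sublattices on which the weights are genuinely i.i.d.\ and invoke Martin's greedy lattice animal theorem \cite{MR1884923}. The difference is in how the reduction is organized. You first coarse-grain into block sums $Z_k$ over boxes of side $M>R$, obtaining a $1$-dependent field, and then split $\Z^d$ into the $2^d$ sublattices $2\Z^d+\epsilon$; this forces you to transfer the tail hypothesis to $Z_0$ and to worry about the enclosure step you flag as the main obstacle. The paper skips the block-sum layer entirely: it partitions $\Z^d$ directly into the $K^d$ residue sublattices $E_k=k+K\Z^d$ (with $K$ the dependence range), on which the \emph{original} variables are already i.i.d., and maps an animal $A$ to $F_k(A)=\{x\in E_k:\,R(x)\cap A\ne\emptyset\}$, the set of blocks met by $A$; since adjacent sites have equal or adjacent block indices, $F_k(A)$ is automatically connected, contains $A\cap E_k$, and has cardinality at most $|A|$, so nonnegativity gives $\sum_{x\in A\cap E_k}X_x\le\sum_{x\in F_k(A)}X_x$ with no spanning-tree augmentation and no tail-transfer computation (the same observation would simplify your Step 3: the coordinatewise block-index map already preserves $\ast$-connectivity, so no extra vertices are needed). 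Two loose ends in your sketch are worth tightening: (i) Martin's Theorem 1.1 is stated for nearest-neighbor ($\ell^1$-connected) animals, whereas your $\tilde K_\epsilon$ is only $\ast$-connected, so you need the preliminary enlargement of a $\ast$-connected set of size $n$ into an $\ell^1$-connected set of size $O(n)$ --- this is exactly the paper's first step; (ii) pairwise independence of widely separated block sums (your ``equivalently, $1$-dependent'') is not by itself the i.i.d.\ input Martin's theorem requires; you should note that the two-set finite-range condition on $(X_k)$ yields \emph{joint} independence of the separated blocks, e.g.\ by peeling off one block against the union of the others. With those routine repairs your argument goes through and is essentially a variant of the paper's proof rather than a different method.
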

\begin{remark}
    As remarked in \cite{MR1884923}, if $\E |X_0|^\beta < \infty$ for some $\beta > d,$ then condition \ref{item:cdfBound} is satisfied: $\int_0^\infty(1-F(x))^{1/d}dx < \infty$.
\end{remark}

\begin{proof}
    We will use Theorem 1.1 in~\cite{MR1884923}. There are two differences between our setting and that of~\cite{MR1884923}. First, in~\cite{MR1884923}, two nodes $x,y\in\Z^d$ are connected by an edge if 
    \[\sum_{i=1}^d |x_i-y_i| = 1.\]
    We call $B\subset \Z^d$ $\ell^1$-connected if it is connected with respect to this $\ell^1$ graph structure. We denote by $\mathcal{C}_1(n)$ the set of all $\ell^1$-connected components of $\Z^d$ containing the origin of size $n$. If $A$ is a $\ast$-connected subset of $\Z^d$ of size $n$, then there is an $\ell^1$-connected subset of $\Z^d$ of size at most $2^d n$ such that $A\subset B$. For instance, the set $B$ can be constructed by adding all $\ell^1$-nearest neighbors of elements in $A$. Since $X_k \ge 0$, this argument shows that 
    \[\max_{A\in \mathcal{C}(n)}\sum_{k\in A}X_k  \le \max_{A\in \mathcal{C}_1(2dn)}\sum_{k\in A}X_k,\]
    so  it suffices to prove \eqref{eq:latticeAnimalsFinite} for the case where $\mathcal{C}(n)$ is replaced by $\mathcal{C}_1(n)$ defined via $\ell^1$-connectedness,  the graph structure considered in \cite{MR1884923}.

    The second difference with Theorem 1.1 in \cite{MR1884923} is that in our case the random variables $(X_x)_{x\in \Z^d}$ are not independent but rather have finite range of dependence. However, we can reduce the problem to the i.i.d. case. Let $K\in \N$ be an upper bound on the dependence range of $(X_k)_{k\in \Z^d}$ and let $M_K = [0,K)^d\cap \Z^d.$

    For $k\in M_K$, we can regard 
    \[E_k := k + K \Z^d = \{k + K x\,:\,x\in \Z^d\}\]
    as a graph isomorphic to $\Z^d$ with $\ell^1$ nearest neighbor edges. Additionally, 
    \[\bigcup_{k\in M_K} E_k = \Z^d.\]
    The i.i.d.\ family $(X_{x})_{x\in E_k}$ satisfies the requirements of \cite{MR1884923}. Let $\mathcal{C}_1(n,k)$ denote the set of $\ell^1$-connected subsets of $E_k$ of size at most $n$ containing $k.$ We claim that for each $k\in M_K$ and $A\subset \mathcal{C}_1(n)$ there exists a set $F_k(A) \in \mathcal{C}_1(n,k)$ satisfying
    \begin{equation}\label{eq:Fproperty}
        A\cap E_k \subset F_{k}(A),\quad\forall A\in \mathcal{C}_1(n).
    \end{equation}
    If $x \in \Z^d$, then we can write $x = k + Ky$ for some (unique) $k\in M_K$ and $y\in \Z^d$. Let $R(x)$ denote the set $M_K + Ky$. Define the map $F_{k}$ in the following way:
    \begin{equation}\label{eq:FkDef}
        F_{k}(A) = \{x\in E_k\,:\, R(x)\cap A\neq \emptyset\}.
    \end{equation}
    Now suppose $A\in \mathcal{C}_1(n)$. The fact that
    \[\#\{x\in E_k \,:\,R(x)\cap A\}  \le \sum_{x\in E_k}|A\cap R(x)| = |A|\]
    implies that $|F_k(A)| \le n.$ Now let $x,x'\in F_k(A)$. Then, there are $z\in A\cap R(x)$ and $z'\in A\cap R(x')$ and an $\ell^1$-conected path $(w_0,\dots, w_m)$ in $A$ connecting $z$ to $z'.$ If $w'_i\subset E_k$ for $i=1,\dots,m$ are such that $w_i\in R(w'_i),$ then $(w'_i)_i$ is an $\ell^1$-connected path in $E_k$ and $w'_i\in F_k(A)$ for each $i$. Additionally, $w_0' = x$ and $w_{m}' = x'$. It follows that $x$ and $x'$ are connected by a path (considered as a subset of the graph $E_k$) in $F_k(A)\cap E_k$ and so the set $F_k(A)$ is connected as a subset of $E_k.$ Thus, $F_k(A) \in \mathcal{C}_1(n,k).$
    
    We have
    \begin{align*}
        \max_{A\in \mathcal{C}_1(n)}\sum_{x\in A}X_x &= \max_{A\in \mathcal{C}_1(n)}\sum_{k\in M_K}\sum_{x\in A\cap E_k}X_x\\
        & \le \max_{A\in \mathcal{C}_1(n)}\sum_{k\in M_K}\sum_{x\in F_{k}(A)}X_x
        \le \sum_{k\in M_K}\max_{B\in \mathcal{C}_1(n,k)}\sum_{x\in B}X_x.
    \end{align*}
    Theorem 1.1 in \cite{MR1884923} directly implies that 
    \[\sup_{n\in \N}\frac{1}{n}\max_{B\in \mathcal{C}_1(n,k)}\sum_{x\in B}X_x < \infty\]
    for each $k\in M_K$ and so \Cref{prop:latticeAnimalsFiniteRange} follows.
\end{proof}

\begin{proof}[Proof of \Cref{lem:abstractIntegralBound}]
    We will discretize our path and use \Cref{prop:latticeAnimalsFiniteRange}. Let 
    \[Z_k =  \sup_{x\in I_k} Z(x),\]  
    \[\chi_k = \begin{cases}
        1, & \exists s\text{ s.t. }\gamma_s\in I_k,\\
        0, & \text{otherwise},
    \end{cases}\]
    and $\chi(\gamma) = \{k\in \Z^d\,:\,\chi_k = 1\}$ for $\gamma\in \SP.$ If $\gamma \in \Gamma_X,$ then
    \begin{align}\label{eq:ZgammaUpperBound}
        \int_0^t Z(\gamma)ds \le \sum_{k\in \Z^d}Z_k \int_0^t \1_{\gamma_s\in I_k}ds & = \sum_{k\in \Z^d}Z_k \tau_{I_k}(\gamma)\\ 
& \le \sum_{k\in \Z^d}Z_k X_k\chi_k \notag
         \le \sqrt{\sum_{k\in \chi(\gamma)} Z_k^2  \sum_{k\in \chi(\gamma)} X_k^2}.\notag
    \end{align}
    Also, the collections $(Z_k^2)_{k\in \Z^d}$ and $(X_k^2)_{k\in \Z^d}$ both satisfy the conditions of \Cref{prop:latticeAnimalsFiniteRange}. Also, note that by continuity of $\gamma$, $\chi(\gamma)$ is a $\ast$-connected set in $\Z^d$ that contains the origin. In particular, $\chi(\gamma) \in \mathcal{C}(|\chi(\gamma)|)$. \Cref{prop:latticeAnimalsFiniteRange} implies that almost surely
    \begin{equation}
        \sup_{n\in \N}\max_{S\in  \mathcal{C}(n)} \frac{1}{n^2}\sum_{k\in S} Z_k^2  \sum_{k\in S} X_k^2 < \infty
    \end{equation}
    So, almost surely
    \begin{equation}\label{eq:sumsUpperBound3}
        \sup_{\gamma\in \mathcal{S}_{0,\ast,\ast}} \frac{1}{|\chi(\gamma)|^2}\sum_{k\in \chi(\gamma)} Z_k^2  \sum_{k\in \chi(\gamma)} X_k^2 < \infty.
    \end{equation}
    Following the argument of Lemma 4.5 in \cite{bakhtin2023differentiability}, one can show that there is $C>0$ such that for all paths $\gamma \in \mathcal{S}_{0,\ast,\ast}$,
    \begin{equation}\label{eq:discretizationUpperBound}
        |\chi(\gamma)| \le C\Length(\gamma) + C.
    \end{equation}
    The claim then follows by combining \eqref{eq:ZgammaUpperBound}, \eqref{eq:sumsUpperBound3}, and \eqref{eq:discretizationUpperBound}.
\end{proof}

\begin{proof}[Proof of \Cref{lem:measurableSelectionLem}]
    First, note that the Eberlein–Šmulian theorem (see Theorem 13.1 in Chapter V of \cite{alma991032421649703276}) 
     implies that weak compactness is equivalent to weak sequential compactness in a Banach space. Thus, condition \ref{seqCompact} of \Cref{lem:measurableSelectionLem} implies that $F(x,\cdot)$ is weakly sequentially lower semicontinuous for all $x\in X$.

    Consider the set-valued function 
    \begin{align*}
        \Psi:X&\to \mathcal{P}(Y)\\\
        x&\mapsto \{y\in Y\,:\,F(x,y) = \inf_{y'\in Y}F(x,y')\},
    \end{align*}
    where $\mathcal{P}(Y)$ is the power set of $Y$. We will apply the Kuratowski--Ryll-Nardzewski (KRL) Selection Theorem (see Theorem 18.13 in \cite{Aliprantis:MR2378491}). If the conditions of this theorem are met, then there is a measurable map $f:X\to Y$ such that $f(x) \in \Psi(x)$ for all $x\in X$, which is equivalent to \eqref{eq:measurableSelection}. First note that as a separable Banach space $Y$ is also a Polish space, one condition of the KRL theorem. We must additionally verify that the map $\Psi$ takes values in closed, nonempty sets and satisfies a set valued measurability condition known as being weakly measurable. 

    First we verify that $\Psi(x)$ is nonempty for all $x\in X.$ Let $I(x) = \inf_y F(x,y) < \infty.$ Take a sequence $(y_n)_{n\in \N}$ such that $F(x,y_n)\to I(x)$. The sequence $(y_n)_{n\in\N}$ has a weakly convergent subsequence to a $y^*\in Y$ because the set $\{y\,:\, F(x,y) \le I(x) + 1\}$ is weakly sequentially compact. Since $F$ is weakly sequentially lower semicontinuous, $F(x,y^*) = I(x)$, and so in fact $y^*\in \Psi(x)$.

    The fact that $\Psi$ takes value in closed sets follows directly from weak sequential lower semicontinuity of $F$. Indeed, if $(y_n)_{n\in \N}$ is a sequence such that $y_n \in \Psi(x)$ and $y_n\to y$, then $F(x,y)\le I(x)$, which implies $y\in \Psi(x)$.

    Now we prove that the map $\Psi$ is weakly measurable. Weak measurability means that for every open set $U\subset Y,$ the set 
    \[U_{\Psi^{-1}} := \{x\in X\,:\,\Psi(x)\cap U\neq 0\}\]
    is measurable in $X$. Let $\mathcal{A}\subset \mathcal{P}(Y)$ denote a countable basis of open balls in $Y$. For $x\in \mathcal{G}$ (where $\mathcal{G}$ is as in condition \ref{countableSubset} of the lemma) and $B\in \mathcal{A}$, define 
    \[I_{x,B}  := \inf \{F(x,y)\,:\,y\in B\}.\]
    If $I_{x,B} < \infty$, then there exists a $y_{x,B}\in \overline{B}$ satisfying $F(x,y_{x,B}) = I_{x,B}.$ Indeed, $\overline{B}$ is convex and strongly closed and thus weakly closed. It follows that the set $\{y'\in \overline{B}\,:\,F(x,y')\le I_{x,B}+1\}$ is weakly compact and $F(x,\cdot)$ is weakly lower semicontinuous, and so the existence of $y_{x,B}$ follows as soon as $I_{x,B} < \infty.$ For $B\in \mathcal{A}$, let $\mathcal{G}_B$ be those $x\in \mathcal{G}$ such that $I_{x,B} < \infty.$ For $U\subset Y$ open, let $\mathcal{A}_U$ denote those sets $B\in \mathcal{A}$ such that $\overline{B}\subset U.$ We claim that 
    \begin{equation}\label{eq:UpsiMeasurable}
        U_{\Psi^{-1}} = \bigcup_{B\in \mathcal{A}_U}\bigcap_{k=1}^\infty\bigcup_{x'\in \mathcal{G}_B}\bigcap_{y'\in Y_{\mathcal{G},\mathcal{A}}} C(k,y_{x',B}, y')
    \end{equation}
    where 
    \begin{equation}\label{eq:CkyyDef}
        C(k,y,y') := \{x\in X\,:\, F(x,y)\le F(x,y') + 1/k\}.
    \end{equation}
    For every $y\in Y$ the map $x\mapsto F(x,y)$ is measurable, and so $C(k,y,y')$ is measurable for each $k\in\N$ and $y,y'\in Y.$ It follows that if \eqref{eq:UpsiMeasurable} holds, then $U_{\Psi^{-1}}$ is measurable, and we can conclude that a measurable selection exists. We will now prove that the equality \eqref{eq:UpsiMeasurable} holds.

    First we will prove the forward inclusion of \eqref{eq:UpsiMeasurable}. Let $x$ be in $U_{\Psi^{-1}}$. By definition there is $y^*\in U$ such that $F(x,y^*) = I(x)$. Since $U$ is open, we can find $B\in \mathcal{A}_U$ such that $y^*\in B$. By condition \ref{countableSubset}, for all $k\in \N$ there exists $x_k\in \mathcal{G}$ such that for all $y\in \overline{B}$ satisfying either $F(x,y) \le I(x) + 1$ or $F(x_k,y) \le I(x) + 1,$
    \begin{equation}\label{eq:xkDef}
        |F(x_k,y) - F(x,y)| < \frac{1}{2k}.
    \end{equation}
    Since $F(x,y^*) = I(x),$ we must have $F(x_k,y^*) \le I(x) + 1/(2k)$ and so in particular $x_k\in \mathcal{G}_B.$ Also, $F(x_k, y_{x_k,B}) \le I(x) + 1/(2k)$ by minimality of $y_{x_k,B}$. So, \eqref{eq:xkDef} and minimality conditions of $y_{x_k,B}$ and $y^*$ imply
    \begin{align*}
        F(x,y_{x_k,B}) &\le F(x_k, y_{x_k,B}) + \frac{1}{2k}\\
        & \le F(x_k, y^*) + \frac{1}{2k}\\
        & \le F(x,y^*) + \frac{1}{k}\\
        & \le F(x,y') + \frac{1}{k}
    \end{align*}
    for all $y'\in Y.$ We conclude that $x$ is in the right-hand side of \eqref{eq:UpsiMeasurable} because there is $B\in \mathcal{A}_U$ such that for all $k\in \N$ there is $x_k\in \mathcal{G}_\mathcal{B}$ such that $F(x,y_{x_k,B}) \le F(x,y')+1/k$ for all $y'\in Y_{\mathcal{G},\mathcal{A}}.$ This implies the forward inclusion in \eqref{eq:UpsiMeasurable}. Note that the sequence $y_k = y_{x_k,B},$ $k\in \N$, satisfies 
    \begin{equation}\label{eq:ykSequence}
        \lim_{k\to \infty}F(x,y_k) = I(x).
    \end{equation}

    Now suppose 
    \[x\in \bigcup_{B\in \mathcal{A}_U}\bigcap_{k=1}^\infty\bigcup_{x\in \mathcal{G}_B}\bigcap_{y'\in Y_{\mathcal{G},\mathcal{A}}} C(k,y_{x,B}, y').\]
    We wish to show that there exists a $y^*\in U$ such that $F(x,y^*) = I(x).$ Let $B\in \mathcal{A}_U$ be such that for all $k\in \N$, there is $x_k\in \mathcal{G}_B$ such that 
    \begin{equation}\label{eq:FxkInequality}
        F(x,y_{x_k,B})\le F(x,y')+1/k
    \end{equation}
    for all $y'\in Y_{\mathcal{G},\mathcal{A}}.$ 
        
    We first claim that there is $y'\in Y_{\mathcal{G},\mathcal{A}}$ such that $F(x,y') < \infty.$ Indeed, $I(x)< \infty$ by assumption, and there is a sequence $(y_k)_{k\in \N}\subset Y_{\mathcal{G},\mathcal{A}}$ satisfying \eqref{eq:FxkInequality} and so there exists such a $y'\in Y_{\mathcal{G},\mathcal{A}}.$ Since $x\in C(k,y_{x_k,B}, y')$, it follows that $F(x,y_{x_k,B}) \le R$ for all $k\in \N$, where $R := F(x,y') + 1$.

    The set $F^{-1}(x;R)\cap \overline{B}$ is weakly sequentially compact, and so there is a subsequence of $(y_{x_k,B})_{k\in \N}$ that weakly converges to some $y^*\in \overline{B}\subset U.$ By lower semicontinuity of $F(x,\cdot)$ and \eqref{eq:FxkInequality}, we have $F(x,y^*) \le F(x,y')$ for all $y'\in Y_{\mathcal{G},\mathcal{A}}.$ Because there exists a sequence $(y_k)_{k\in \N}$ in $Y_{\mathcal{G},\mathcal{A}}$ such that \eqref{eq:ykSequence} holds, $F(x,y^*) = I(x).$ Since $y^*\in U$ is follows that $x\in U_{\Psi^{-1}}.$ Thus, the reverse inclusion in \eqref{eq:UpsiMeasurable} is proven, and we may conclude that $U_{\Psi^{-1}}$ is measurable for all open sets $U\subset Y.$

    The conclusions of Kuratowski--Ryll-Nardzewski Selection Theorem are satisfied and so we may conclude the existence of the measurable selection $f$ and the lemma.
\end{proof}

\section{Proof of Theorem~\ref{th:conditions-hold-for-Riemannian-models} } \label{sec:appendix}

\begin{proof}[Proof of Lemma~\ref{lem:XiProperties}]
   Let $u_1,\dots, u_{d-1}$ be an orthogonal basis for the subspace $H$ orthogonal to $v$ and define the change-of-basis matrix $\mathbf{H} = \begin{bmatrix}
       u_1 &\dots & u_{d-1} & v
   \end{bmatrix}.$
   
   Since $w-v \in H$, $w = v + \sum_{i=1}^{d-1}a_i u_i$ for some scalars $a_1,\dots, a_{d-1}.$ If $x \in H,$ then $\Xi_{v\to w}x = x,$ and so $\Xi_{v\to w}$ acts as the identity on $H$. Also, $\Xi_{v\to w} v = w.$ It follows that in the basis $\{u_1,\dots, u_{d-1}, v\}$ the matrix $\Xi_{v\to w}$ is 
   \begin{equation}
       M = \begin{bmatrix}
           1      & 0 & 0     &\dots & a_1\\ 
           0      & 1 & 0     &\dots & a_2\\
           \vdots &   & \ddots &     & \vdots \\
               &   &        &  1  &   a_{d-1} \\ 
           0      &   &    \dots    &  0   & 1
       \end{bmatrix}.
   \end{equation}
    The matrix $M$ has determinant one, and since $\Xi_{v\to w}$ is similar to $M$, so does $\Xi_{v\to w}$.

    The above analysis implies additionally that $\Xi_{v\to w}$ is a nondegenerate linear map, hence the induced map on $\SP$ is bijective. Additionally, since $\Xi_{v\to w}T v = Tw$ for all $T>0$, $\Xi_{v\to w}$ restricted to $\SP_{0,Tv,*}$ is a bijective map from $\SP_{0,Tv,*}$ to $\SP_{0,Tw,*}$.
\end{proof}

\begin{proof}[Proof of \Cref{th:conditions-hold-for-Riemannian-models}]
    We will work out the details fully for \Cref{eg:FPPexample1} and give a sketch for \Cref{eg:FPPexample2}. Now take $g$ to be the function described in \Cref{eg:FPPexample1}.
    We first check condition \ref{stationaryPhiCondition}. The transformations $(\theta^x_*)_{x\in \R^d}$ defined in \Cref{sec:examples-Riemannian} are ergodic by ergodicity of marked Poisson processes with respect to spatial shifts (see \cite{Kingman:MR1207584} or \cite{Daley:MR1950431}). Now let $r\in \R^d.$ From the definition of $\theta^r_*$ in this model:
    \begin{align*}
        g_{\theta^r x, \theta^r_* \omega} & = \int \varphi(\theta^r x - \theta^r y)\mathbf{N}(dy,d\varphi) + \lambda I\\
        & = \int \varphi(x + r - y - r)\mathbf{N}(dy,d\varphi)  + \lambda I\\
        & = g_{x,\omega},
    \end{align*}
    and so condition \ref{stationaryPhiCondition} is satisfied.

    We now establish condition \ref{phiwDiffCondition}. We set $\delta = 1$. The fact that $\Xi_{v\to w}^*$ is measure preserving for all $w\in v+H$ follows from \Cref{lem:XiProperties}. By the uniform compact support requirement, we can find $R>0$ such that if $|y| > R$ and $|w-v|\le 1$, then $\Qq\{\varphi(\Xi_{v\to w} y) = 0\} = 1$. Thus,
\[\|g^{w,v}_x\| \le \lambda + \int \|\varphi(\Xi_{v\to w}(x-y))\| \mathbf{N}(dy,d\varphi)\le \lambda + \int\|\varphi\|_{C^2}\1_{|x-y|\le R}\mathbf{N}(dy,d\varphi).\]
Also, there is $C_1 > 0$ such that for all $w$ satisfying $|w-v|\le 1$,
\begin{align*}
    \|\partial_{w_i} g^{w,v}_x\| &\le \int \|\partial_{w_i}\varphi(\Xi_{w}(x-y))\|\mathbf{N}(dy,d\varphi)\\
    & \le \int \|\nabla \varphi(\Xi_{v\to w}(x-y))\|\|\partial_{w_i}\Xi_{v\to w}\| |x-y|\mathbf{N}(dy,d\varphi) \\
    & \le C_1 \int \|\varphi\|_{C^2}\1_{|x-y|\le R}\mathbf{N}(dy,d\varphi).
\end{align*}
Similarly, there is $C_2 > 0$ such that for all $w$ satisfying $|w-v|\le 1$,
\[\|\partial_{w_j }\partial_{w_i} g^{w,v}_x\| \le  C_2 \int \|\varphi\|_{C^2}\1_{|x-y|\le R}\mathbf{N}(dy,d\varphi).\]
Let $\eta$ be a smooth function whose support is contained by the ball centered at the origin of radius $2R$ such that $\eta(x) \ge \1_{|x|\le R}$ for all $x\in \R^d.$ For a sufficiently large constant $C$ random function $\YField$ defined by
\begin{equation}\label{eq:YSum}
    \YField(x) = \lambda + C \int \|\varphi\|_{C^2}\eta(x-y)\mathbf{N}(dy,d\varphi)
\end{equation}
satisfies \eqref{eq:boundForg} and \ref{boundedY}. Indeed, $\YField$ is stationary with respect to lattice shifts by stationarity of Poisson points, verifying \ref{Ystationary}. In addition, $\YField$ has finite range dependence due to the compact support of $\eta$, and so \ref{YFiniteRange} follows. Now we verify the moment condition in \ref{YMomentCondition}. If $(\varphi_i)_{i\in \N}$ are and i.i.d. family with distribution $\Qq$, and $N(R)$ the number of Poisson points in a ball of radius $2R,$ then, for $\ell > 0$ such that $\Qq\|\varphi_1\|^\ell < \infty$, the Marcinkiewicz--Zygmund inequality (\cite{Marcinkiewicz1937}) implies, for some $C,C'>0,$
\[\E\Big[\sup_{x\in [0,1]^{d}} |\YField(x)|^{\ell}\Big] \le C + C \E\Big[\Big|\sum_{i=1}^{N(R)}\|\varphi_i\|\Big|^\ell\Big] \le C + C'\E| N(R)|^{\ell/2} < \infty.\]
Because \Cref{eg:FPPexample1} assumes that $\Qq\|\varphi_1\|^{\beta} < \infty$ for some $\beta>4d$, the above display implies \ref{YMomentCondition} of \Cref{eg:FPPexample1} for the same $N$.

The uniform positive definite condition in \ref{uniformPositiveDefinite} is satisfied due to the $\lambda I$ factor in \eqref{eq:phiSumRepresentation}.

This completes the proof in the case where $g$ is as in \Cref{eg:FPPexample1}.

Now take $g$ as given in \Cref{eg:FPPexample2}. The only meaningful difference to the preceding argument is in computing derivatives of $g^{w,v}$ and bounding them by an appropriate field $\YField$. We have 
\[g^{w,v}_x = \exp\Big(\int\varphi(\Xi_{v\to w}(x-y))\mathbf{N}(dy,d\varphi)\Big).\]

Under \Cref{eg:FPPexample2}, $\|\varphi\|_{C^2}$ is bounded by a deterministic constant. We let $\eta$ be the smooth function used previously whose suppose contains the ball of radius $2R$. Then, it suffices to take
\begin{equation}\label{eq:YProduct}
    \YField(x) = \exp\Big( C\int \eta(x-y) \mathbf{N}(dy,d\varphi)\Big)+ C
\end{equation} 
for a sufficiently large constant $C$. Specifically, there is a constant $C>0$ such that 
\begin{equation}\label{eq:productUpperBound}
    \|\exp\Big(\int\varphi(\Xi_{v\to w}(x-y))\mathbf{N}(dy,d\varphi)\Big)\|_{C^2,x} \le Y(x)
\end{equation}
holds 
The verification that \eqref{eq:YProduct} satisfies \ref{boundedY} is similar to the argument for \eqref{eq:YSum}. Indeed, $\log \YField$ is of the same sum form as in \eqref{eq:YSum}, and so the stationarity and finite range dependence conditions follow in the same manner. Additionally, $\E|\sup_{x\in [0,1]^d}Y(x)|^\ell < \infty$ for all $\ell > 0$ by compact support of $\eta$ and the fact that Poisson random variables have finite moment of all orders.

Now we will sketch the argument for \eqref{eq:productUpperBound}. For a path $X(t)$ in matrix space, 
\[\frac{d}{dt} \exp(X(t)) = \int_0^1 e^{\alpha X(t)}\frac{d X(t)}{dt}e^{(1-\alpha)X(t)}d\alpha\]
(see Theorem 2.19 in Chapter IX of \cite{Kato:1966:PTL}). Also, for a matrix $M$, we have $\|e^{M}\|\le e^{\|M\|}.$ It follows that 
\begin{align*}
   \|\frac{d}{dt}e^{X(t)}\| \le e^{\|X(t)\|}\|\frac{d}{dt}X(t)\|.
\end{align*}
The above formula and the fact almost surely $\|\varphi\|\le C_1$ and $\|\Xi_{v\to w}\|\le C_2$ for deterministic constants $C_1,C_2$ can be used to show that $\partial_{w_i}\exp\Big(\int\varphi(\Xi_{v\to w}(x-y))\mathbf{N}(dy,d\varphi)\Big)$ and $\partial_{w_j}\partial_{w_i}\exp\Big(\int\varphi(\Xi_{v\to w}(x-y))\mathbf{N}(dy,d\varphi)\Big)$ are bounded by \eqref{eq:YProduct} for a sufficiently large deterministic constant $C$.

Finally, the relation $\int \varphi(x-y)\mathbf{N}(dy,d\varphi) \succeq 0$ implies $g_x \succeq I$ and so the uniform positive definite condition in \ref{uniformPositiveDefinite} holds with $\lambda = 1$.

\end{proof}

\section{Proof of Theorem~\ref{thm:main-on-broken-lines}.}\label{sec:proofs_broken-lines}
\subsection{Checking conditions \ref{cond:subbadd} -- \ref{cond:for-finiteness}, \ref{cond:setup_diff} -- \ref{cond:M_infty}}

For $r>0$, distinct $x,y\in\R^d$ and $n\in\N$, and $\omega\in\Omega$ we define  $\NicePaths^r_{x,y,n}(\omega)$ to be the set of paths 
$\gamma\in\Paths_{x,y,n}$ satisfying the following condition:
for all  $i=0,1,\ldots,n-1$, $|\gamma_{i+1}-\gamma_{i}|\le r$, and 
there are numbers $k$ and $i_0,i_1,\ldots, i_k$ such that 
\begin{enumerate}[label = (\roman*), ref=\theenumi{}\rm{\roman*}]
\item $0=i_0<i_1<\ldots<i_k=n$; 
\item $\gamma_{i_j}\in\omega$ for $j=1,\ldots,k-1$;
\item $\gamma_{i_j}\ne \gamma_{i_m}$ if $j\ne m$;
\item for each $j=0,1,\ldots,k-1$, and every $i\in\{i_j,i_j+1,\ldots,i_{j+1}\},$
\[
    \gamma_{i}=
    \frac{i-i_j}{i_{j+1}-i_j} \gamma_{i_{j+1}}+\frac{i_{j+1}-i}{i_{j+1}-i_j} 
    \gamma'_{i_{j}}.
\]
\end{enumerate}

We also use notation $\NicePaths^r_{*,*,n}(\omega)$, $\NicePaths^r_{*,*,*}(\omega)$, etc., similarly to \eqref{eq:paths_stars}.

\begin{lemma}
    \label{lem:existence-minimizers}
    There a set $\Omega'\in\Fc$ with $\Prb(\Omega')=1$, a number $r>0$, and a jointly measurable map 
    \begin{align*}
        \gamma:\Omega\times\R^d\times\R^d &\to \Paths\\
                (\omega, x,y) &\mapsto \gamma_\omega(x,y)
    \end{align*}
    such that 
    for all $\omega\in\Omega'$ and $x,y\in \R^d$, $\gamma_\omega(x,y)\in\NicePaths^r_{x,y,*}(\omega)$
    and it is  a geodesic under $\omega$.
\end{lemma}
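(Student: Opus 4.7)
The plan is to reduce the path minimization to a finite combinatorial problem and then make a measurable selection. First I would establish a priori spatial bounds on any geodesic. From Lemma~\ref{lem:metric} one has the deterministic bound $\Amin_\omega(x,y)\le C(|x-y|+1)$. On the other hand, convexity of $L$ together with $L(0)=0$ forces $L(\xi)/|\xi|$ to be nondecreasing along rays from the origin, which combined with the superlinearity assumption \ref{asm:superlinear_L} yields constants $c_1,c_2>0$ with $L(\xi)\ge c_1|\xi|-c_2$ for every $\xi$. Hence any minimizing path from $x$ to $y$ satisfies $\sum_i|\Delta_i\gamma|\le C'(|x-y|+1)$ and stays in a compact ball $B(x,y)\subset\R^d$.

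Next I would produce a universal step-size bound. Since $L\in C^2$, $L(0)=0$, and $\nabla L(0)=0$ (because $0$ is a minimum), there is $C_0$ with $L(\xi)\le C_0|\xi|^2$ for $|\xi|\le 1$. Replacing a single step $\Delta$ of length $\ell>1$ by $m=\lceil\ell\rceil$ equal sub-steps of length $\le 1$, adding $m-1$ new interior vertices, gives cost at most $mL(\Delta/m)+(m-1)\le (C_0+1)\ell$. A quantitative form of superlinearity gives $L(\xi)\ge K|\xi|$ for $|\xi|\ge r_K$, so taking $K=C_0+2$ shows that any step longer than $r:=r_{C_0+2}$ can be strictly improved by subdivision. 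Hence, without loss of generality, every geodesic has all steps of length at most this universal $r$.

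Next I would impose the equally-spaced structure. Define the anchors of a path to be its endpoints together with all vertices lying in $\omega$; between two consecutive anchors $p,q$ with $m$ sub-steps, convexity of $L$ and Jensen's inequality show that the $L$-cost is minimized by equal increments $(q-p)/m$. Any intermediate (non-anchor) vertex happening to lie in $\omega$ could be promoted to an anchor with strictly smaller action (since $F_\omega=0$ on $\omega$), so any minimizer has exactly the structure demanded by $\NicePaths^r$; and the number $m$ of sub-steps between two consecutive anchors is bounded by a constant times $|q-p|+1$ by the preceding steps, so belongs to a finite set.

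Almost surely the configuration $\omega\cap B(x,y)$ is finite, so for each $(\omega,x,y)$ there are only finitely many candidate anchor sequences and, for each, finitely many admissible tuples $(m_j)$. A geodesic in $\NicePaths^r_{x,y,*}(\omega)$ therefore exists. For joint measurability, I would equip $\Paths=\bigsqcup_n(\R^d)^{n+1}$ with its Borel $\sigma$-algebra, order the candidates lexicographically (breaking ties across different $n$ by taking the smallest $n$), and let $\gamma_\omega(x,y)$ be the lexicographically smallest minimizer. Measurability then follows because $\omega\mapsto\omega\cap B(x,y)$ is measurable as a Poisson counting functional, and selecting the minimum of finitely many measurable functions with a deterministic tiebreaker is measurable. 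The main obstacle, I expect, is the uniform step-size bound of the second paragraph: it requires quantitative deterministic estimates combining the quadratic control near zero (from \ref{asm:lower-L-at-0} and $C^2$ smoothness) and the superlinear growth at infinity, so that a single $r$ works simultaneously for all $(\omega,x,y)$.
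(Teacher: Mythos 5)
Your reduction to a finite combinatorial problem, the step-size bound via superlinearity, the Jensen/equal-spacing argument, and the lexicographic measurable selection all match the paper's strategy (its Lemma~\ref{lem:nice-paths} and the final selection argument). The genuine gap is in your very first step: the claim that a minimizer deterministically stays in a compact ball $B(x,y)$ with total length $\le C'(|x-y|+1)$. The bound $L(\xi)\ge c_1|\xi|-c_2$ only gives $\sum_i L(\Delta_i\gamma)\ge c_1\sum_i|\Delta_i\gamma|-c_2 n$, and the number of steps $n$ is not controlled by the action, because vertices sitting on Poissonian points carry zero penalty ($F_\omega=0$ there). Concretely, if the configuration $\omega$ contains a long chain of Poissonian points with spacing $\epsilon$, a path following that chain over Euclidean distance $M$ costs only about $M\epsilon$ (each step costs $O(\epsilon^2)$ since $L$ is $C^2$ with a minimum at $0$), which can be made arbitrarily small; such locally finite configurations belong to $\Omega$, so no deterministic localization of minimizers holds, and even the existence of a minimizer cannot be obtained for every $\omega$ --- which is exactly why the lemma only asserts a full-measure set $\Omega'$.

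The paper closes this gap with a genuinely probabilistic argument: it introduces the stationary, finite-range block-crossing field $(\xi_k)$ (\Cref{lem:lower-bound-in-R-cube}), feeds it into the greedy-lattice-animal estimate of LaGatta--Wehr (\Cref{lem:LaGatta-Wehr}), and deduces \Cref{lem:action_at_least_linear}: almost surely, any path in $\NicePaths^r_{x,y,*}(\omega)$ reaching distance $D$ has action at least $C D$. Combined with the deterministic upper bound $\Amin_\omega(x,y)\le C(|x-y|+1)$, this shows that all near-minimizers lie in a ball $\Ball(0,D_R(\omega))$, after which your finiteness-of-candidates and measurable-selection argument goes through (the paper additionally realizes the selection as an a.s.\ limit of minimizers over increasing balls to get joint measurability). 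Without this percolation-type input, the compactness step of your proposal fails, so the proof as written is incomplete.
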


We prove this lemma in Section~\ref{sec:broken-line-minimizers}.

To apply our general theorems to this model, we need to interpret the action as a function of continuous paths from $\SP$. We will define $A_\omega$
separately on broken line paths and other paths. Namely, we set
\[
    A_\omega(\psi)=A_\omega(\psi_0,\psi_{1},\ldots\psi_{n})
\] 
if $n\in\N$ and $\psi\in\SP_{*,*,n}$ satisfies $\psi_{k+t}=(1-t)\psi_k+t\psi_{k+1}$ for all $k=0,1\ldots,n-1$
and $t\in[0,1]$. If a path $\psi\in \SP$ is not of this form, we set $A_\omega(\psi)=+\infty$. There is a natural bimeasurable bijection between discrete paths in $\Paths$ and finite action paths in $\SP$. In particular, Lemma~\ref{lem:existence-minimizers} automatically
provides a measurable representation of continuous optimal paths $\gamma\in\SP$:
for $x,y\in\R^d$ and almost all
$\omega\in\Omega$, there is an action minimizer from~$\SP$  traveling from $x$ to $y$ and switching directions at finitely many Poissonian points. These Poissonian points and the endpoints $x$ and $y$  will be called binding vertices. 

\medskip

With this continuous path interpretation at hand, we can check conditions 
\ref{cond:subbadd} -- \ref{cond:for-finiteness} of Section~\ref{sec:generalSetUp}.
Condition~\ref{cond:subbadd} is a corollary
of \eqref{eq:concatenation}. Condition~\ref{cond:skew-invariance} follows directly from the definition of action in~\eqref
{eq:Poisson-action} and the identity $F_{\omega}(x) = F_{\theta_*^{-x}}(0)$. Condition~\ref{cond:measurability} is implied by 
Lemma~\ref{lem:existence-minimizers}. 
Conditions~\ref{cond:cone} and~\ref{cond:for-finiteness} with $\Cone=\R^d$ follow from
 \eqref{eq:bound_on_rho_1}.

\medskip

 Let us now check conditions \ref{cond:setup_diff} and \ref{cond:M_infty}.

 Fixing an arbitrary $v\in\R^d\setminus\{0\}$, we define $H$ as the orthogonal complement to the line spanned by $v$. The family of transformations
  $(\Xi_{v\to w})_{w\in v+ H}$ of $\R^d$ is defined by~\eqref{eq:XiFPP}.
 Also, for $w\in v+H$, we define the transformation $\Xi^*_{v\to w}$ of $\Omega$ as the pushforward of $\omega\in\Omega$ by $\Xi_{v\to w}$. In other words, we apply the transformation
 $\Xi_{v\to w}$ to each Poissonian point. Choosing $\delta=1$, we see that the setup requirement of~\ref{cond:setup_diff}
 holds.  It remains to check \ref{cond:M_infty}. 
 Let us first compute for all $y\in\R^d$:
 \begin{align*}
     \partial_{w_k} L(\Xi_{v\to w}y)&=\sum_{j=1}^d \partial_j L(\Xi_{v\to w}y)\partial_{w_k} (\Xi_{v\to w}y)_j 
     \\&= \sum_{j=1}^d \partial_j L(\Xi_{v\to w}y)\frac{\langle v,y\rangle}{|v|^2 }\delta_{kj}=\partial_k L(\Xi_{v\to w}y)\frac{\langle v,y\rangle}{|v|^2 },\quad k=1,\dots,d,
 \end{align*}
 and
 \begin{align*}
     \partial_{w_kw_j} L(\Xi_{v\to w}y)=\partial_{kj} L(\Xi_{v\to w}y)\frac{\langle v,y\rangle^2}{|v|^4},\quad k,j=1,\dots,d.
 \end{align*}
 Therefore, if $\gamma^T(v)\in\NicePaths^r_{*,*,n}(\omega)$, then, since
 \begin{align*}
    B(w,v,\gamma^T(v))=&\sum_{i=0}^{n-1} L(\Delta_i\Xi_{v\to w} \gamma^{T}(v))
    \\&\qquad \qquad\qquad +\frac{1}{2}\sum_{i=0}^{n-1} (F_{\Xi^*_{w\to v}\omega}(\Xi_{v\to w}\gamma_i)+F_{\Xi^*_{w\to v}\omega}(\Xi_{v\to w}\gamma_{i+1}))
    \\
    =&\sum_{i=0}^{n-1} L(\Xi_{v\to w} \Delta_i\gamma^{T}(v))
    +\frac{1}{2}\sum_{i=0}^{n-1} (F_{\omega}(\gamma_i)+F_{\omega}(\gamma_{i+1})),
\end{align*} 
 we have
 \begin{equation}\label{eq:firstDerivB_BrokenLine}
    \partial_{w_j} B(w,v,\gamma^T(v)) = \frac{1}{|v|^2}\sum_{i=0}^{n-1}\partial_{j}L(\Xi_{v\to w}\Delta_i\gamma^T(v))\langle v,\Delta_i\gamma^T(v)\rangle
 \end{equation}
 and
 \begin{align*}
     \partial_{w_kw_j} B(w,v,\gamma^{T}(v)))&=\sum_{i=0}^{n-1} \partial_{w_kw_j}L(\Xi_{v\to w}\Delta_i \gamma^{T}(v))
     \\ & =\frac{1}{|v|^4}\sum_{i=0}^{n-1} \partial_{kj}L(\Xi_{v\to w}\Delta_i \gamma^T(v))\langle v, \Xi_{v\to w}\Delta_i \gamma^{T}(v)\rangle^2.
 \end{align*}
 Since  $L\in C^2(\R)$ and the increments of $\gamma^T(v)$ are bounded by $r$, we  obtain that there is a number $D=D(v)$ such that 
 if $|w-v|<1$, then
 \begin{align}\label{eq:estimating-2nd-derivatives}
    | \partial_{w_kw_j} B(w,v,\gamma^{T}(v)))|&\le D(v) \sum_{i=0}^{n-1} |\Delta_i \gamma^{T}(v)|^2.
 \end{align}
Using \ref{asm:lower-L-at-0} we obtain that there is $c(r)>0$ such that if $|y|\le r$, then
\[
    L(y)>c(r) |y|^2.
\]
Therefore, we can extend \eqref{eq:estimating-2nd-derivatives}:
\begin{align*}%\label{eq:estimating-2nd-derivatives}
    | \partial_{w_kw_j} B_\omega(w,v,\gamma^{T}(v)))|&\le c^{-1}(r) D(v) \sum_{i=0}^{n-1} L(\Delta_i \gamma^{T}(v))
    \\ &\le c^{-1}(r) D(v) A_\omega(\gamma^T(v))
 \end{align*}
and \ref{cond:M_infty} follows since 
$\limsup_{T\to\infty} (A(\gamma^T(v))/T)=\Lambda(v)$.
 
The expression for  $\nabla \Lambda$ in~\eqref{eq:BrokenLineDerivative} follows from \eqref{eq:firstDerivB_BrokenLine} and \eqref{eq:H-diff-of Lambda}.

\subsection{Proof of Lemma~\ref{lem:existence-minimizers}}\label{sec:broken-line-minimizers}
We will need several auxiliary lemmas first.

For every $x,y\in\R^d$ and all $n\in\N$, we define  $\gamma(x,y,n)\in\Paths_{x,y,n}$ by
\begin{equation}
    \label{eq:evenly_spaced}    
\gamma_k(x,y,n)=\frac{k}{n}y+\Big(1-\frac{k}{n}\Big)x,\quad k=0,\ldots,n.
\end{equation}

\begin{lemma} \label{lem:nice-paths}
    \begin{enumerate}[label=\arabic*., ref = \rm{\arabic*}]
        \item \label{it:geodesic-restriction}  For all $\omega\in\Omega$, 
    if $\gamma=(\gamma_0,\gamma_1,\ldots,\gamma_n)\in\Paths_{*,*,n}$ is a geodesic, then so is
    $(\gamma_i,\gamma_{i+1},\ldots,\gamma_k )\in\Paths_{*,*,k-i}$ for all $i,k$ satisfying
    $0\le i< k \le n$. 

    \item \label{it:even-steps} For all $\omega\in\Omega$,  $x,y\in\R^d$, $n\in\N$, all
     $\gamma\in \Paths_{x,y,n}$, if $\gamma_k\notin \omega$ for all $k=1,\ldots,n-1$,
     then
     \[
        A_\omega(\gamma)\ge A_\omega(\gamma(x,y,n)),
     \]
     where $\gamma(x,y,n)$ is defined in~\eqref{eq:evenly_spaced}.  
     \item  \label{it:bdd-steps}   There is $r>1$ such that
     if $\gamma$ is a geodesic for some $\omega\in\Omega$, then the distance between any consecutive points of $\gamma$ is bounded by $r$.
 
     \item \label{it:path-struct} 
     Let $r$ be the number provided in part \ref{it:bdd-steps}.
     For all $\omega\in\Omega$, all distinct $x,y\in\R^d$, all $n\in\N$, and
every path $\gamma\in \Paths_{x,y,*}$, there is~$\gamma'\in \NicePaths^r_{x,y,*}(\omega)$
satisfying $A_\omega(\gamma')\le A_\omega(\gamma)$.
\item \label{it:best-action-via-Q} For all distinct $x,y\in\R^d$ and all $\omega\in\Omega$,
\begin{equation}
    \label{eq:def_random_metric3}
\Amin_\omega(x,y)=\inf_{\gamma\in\NicePaths^r_{x,y,*}(\omega)} A_\omega(\gamma).  
\end{equation}
\end{enumerate}
\end{lemma}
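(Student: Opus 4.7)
The plan is to prove the five parts of Lemma \ref{lem:nice-paths} in order, relying on the subadditivity under concatenation \eqref{eq:concatenation}, convexity of $L$ (from \ref{asm:L-convex}), and the superlinear growth \ref{asm:superlinear_L}. Part \ref{it:geodesic-restriction} is a standard cut-and-paste argument: if some $\psi\in\Paths_{\gamma_i,\gamma_k,*}$ satisfied $A_\omega(\psi)<A_\omega(\gamma_i,\dots,\gamma_k)$, then by \eqref{eq:concatenation} the path $(\gamma_0,\dots,\gamma_i)\psi(\gamma_k,\dots,\gamma_n)$ would strictly beat $\gamma$, contradicting that $\gamma$ is a geodesic.

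For Part \ref{it:even-steps}, I will split $A_\omega$ into its $L$-part and $F$-part. Since $\gamma_k(x,y,n)-\gamma_{k-1}(x,y,n)\equiv (y-x)/n$, the $L$-part of $A_\omega(\gamma(x,y,n))$ equals $nL((y-x)/n)$. Jensen's inequality applied to the convex $L$ and the $n$ increments of $\gamma$ (which average to $(y-x)/n$) gives $\sum_{i=0}^{n-1}L(\Delta_i\gamma)\ge nL((y-x)/n)$. The boundary $F$-terms coincide on both paths; the interior $F$-sum of $\gamma$ equals $n-1$ (since no interior vertex of $\gamma$ is Poissonian), while the interior $F$-sum of $\gamma(x,y,n)$ is at most $n-1$. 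Combining these two inequalities yields the claim.

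For Part \ref{it:bdd-steps}, I will choose $r>1$ large enough that $L(x)/|x|>2M+1$ whenever $|x|>r$, where $M:=\sup_{|x|\le 1}L(x)<\infty$ by \ref{eq:L-C2}; such $r$ exists by \ref{asm:superlinear_L}. If a geodesic $\gamma$ had a step $\Delta=\gamma_{i+1}-\gamma_i$ with $|\Delta|>r$, subdivide it evenly into $k=\lceil|\Delta|\rceil$ substeps of length at most $1$. Convexity gives $kL(\Delta/k)\le kM$, and the new interior vertices contribute at most $k-1$ to the $F$-sum, so the replacement subpath has action at most $kM+k-1\le(|\Delta|+1)(M+1)-1<L(\Delta)$ by the choice of $r$; this contradicts Part \ref{it:geodesic-restriction}. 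Part \ref{it:path-struct} then follows from three reductions, each of which only decreases action: (a) remove any loop of $\gamma$ (a segment between two visits to the same vertex) using nonnegativity of $L$ and $F$; (b) between consecutive binding vertices---the endpoints together with the Poissonian vertices visited after (a)---replace the subpath by its evenly-spaced counterpart with the same number of steps, using Part \ref{it:even-steps}; (c) wherever a resulting evenly-spaced segment still has step size exceeding $r$, reparametrize into more substeps, using the same convexity-plus-superlinearity balance as in Part \ref{it:bdd-steps} (concretely: for $|z|/m>r$, compare $mL(z/m)+(m-1)$ against $kL(z/k)+(k-1)$ with $k=\lceil|z|/r\rceil$, using $L(z/k)\le M$ and $L(z)/|z|$ large). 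Part \ref{it:best-action-via-Q} is then immediate: $\NicePaths^r_{x,y,*}(\omega)\subset\Paths_{x,y,*}$ gives one inequality in \eqref{eq:def_random_metric3}, while Part \ref{it:path-struct} gives the reverse.

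The main obstacle is the quantitative trade-off in Parts \ref{it:bdd-steps} and \ref{it:path-struct}(c) between $L$-savings from subdividing a long step and the $F$-cost of introducing new interior vertices; superlinearity \ref{asm:superlinear_L} handles this uniformly, and the same threshold $r$ fixed in Part \ref{it:bdd-steps} can be reused throughout Part \ref{it:path-struct}.
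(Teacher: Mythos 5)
Your proposal is correct and follows essentially the same route as the paper's proof: Jensen/convexity plus the maximal interior penalty for part 2, the superlinearity-versus-subdivision-cost balance for part 3, and parts 4--5 obtained by combining these (the paper states this combination without the explicit loop-removal and re-subdivision details you supply, which are exactly what is needed). One small slip in your step (c): the choice $k=\lceil |z|/r\rceil$ is inconsistent with the bound $L(z/k)\le M=\sup_{|x|\le 1}L(x)$, since then the substeps can have length up to $r>1$; taking $k=\lceil |z|\rceil$ (unit-length substeps, still $\le r$), exactly as in your part 3 argument, makes the comparison $kM+k-1\le |z|(2M+1)< mL(z/m)+(m-1)$ go through.
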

\begin{proof}
Part~\ref{it:geodesic-restriction} is obvious.

To prove part \ref{it:even-steps}, it suffices to note that since $\sum_i \Delta_i\gamma_i=y-x$, convexity of $L$ implies
\begin{align*}
    \frac{1}{n}(A_\omega(\gamma)- A_\omega(\gamma(x,y,n)))
    &\ge\frac{1}{n}\sum_{i=0}^{n-1} L(\Delta_i \gamma) -L\Big(\frac{y-x}{n}\Big)\ge 0.
\end{align*}

To prove part~\ref{it:bdd-steps}, we need to find $r$
such that if $|x-y|>r$, then there is $n\ge 2$ such that $A_\omega(\gamma(x,y,n))< A_\omega(x,y)$ (here $(x,y)\in\Paths_{x,y,1}$). It suffices to check that
\begin{equation}
    \label{eq:proving-points-are-not-far}
   n L((y-x)/n)+n < L(y-x) 
\end{equation}
for some $n\ge 2$. 
Let $L^*=\sup_{|x|\le 2} L(x)<\infty$.
We can use the superlinearity condition
\ref{asm:superlinear_L}
to pick $r>2$ such that $|y-x|>r$
implies
$L(y-x)>(L^*+1)|y-x|$.

If $|y-x|>r$, we set $n=\lfloor |y-x| \rfloor$. Then $n\ge 2$. In addition,
$|y-x|/n\le 2$ implies
\begin{align*}
    nL((y-x)/n)+n\le |y-x| L^*+|y-x|< L(y-x),
\end{align*}
i.e., \eqref{eq:proving-points-are-not-far} holds.

Part~\ref{it:path-struct} 
follows from parts~\ref{it:even-steps} and~\ref{it:bdd-steps}. 
Part~\ref{it:best-action-via-Q} follows from part~\eqref{it:path-struct} 
\end{proof}

 \begin{lemma} For all $x,y\in\R^d$, $\Amin_\omega(x,y)$ is a random variable. 
\end{lemma}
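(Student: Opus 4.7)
The plan is to represent $\Amin_\omega(x,y)$ as a countable infimum of $\Fc$-measurable functions of $\omega$. By part~\ref{it:best-action-via-Q} of \Cref{lem:nice-paths}, we may replace the infimum over $\Paths_{x,y,*}$ with an infimum over $\NicePaths^r_{x,y,*}(\omega)$, where $r$ is the universal constant from the same lemma. For $x \ne y$, each path in $\NicePaths^r_{x,y,*}(\omega)$ is uniquely encoded by three pieces of discrete data: the number $k \ge 1$ of binding intervals, an ordered tuple $(p_1,\ldots,p_{k-1})$ of distinct intermediate binding vertices, each lying in the configuration $\omega$, and a tuple of substep counts $(n_0,\ldots,n_{k-1}) \in \N^k$, subject to the step-length constraint $|p_{i+1} - p_i| \le r n_i$ with the convention $p_0 := x$ and $p_k := y$.

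To measurably index the intermediate binding vertices, I would fix a measurable enumeration $(z_j(\omega))_{j \in \N}$ of the points of $\omega$, e.g.\ by ordering the Poisson points by increasing Euclidean norm with lexicographic tie-breaking, assigning some fixed default value when $\omega$ contains fewer than $j$ points. Then each $p_i$ takes the form $z_{j_i}(\omega)$ for some $j_i \in \N$, and binding-vertex choices are parametrized by tuples $\mathbf{j} = (j_1,\ldots,j_{k-1})$ of distinct positive integers. For each fixed $(k, \mathbf{j}, \mathbf{n})$, let $\widehat{A}_\omega(k, \mathbf{j}, \mathbf{n})$ denote the action of the corresponding broken-line path, set to $+\infty$ whenever the step-length constraint fails or the proposed binding vertices fail to be distinct or equal an endpoint. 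The $L$-contributions depend continuously on the $z_{j_i}(\omega)$, and each penalty contribution of the form $F_\omega\bigl(p_i + (j/n_i)(p_{i+1} - p_i)\bigr)$ is jointly measurable in $\omega$ and in its argument, since $F_\omega(z) = 1 - \omega(\{z\})$ and the map $(\omega, z) \mapsto \omega(\{z\})$ is jointly measurable on the canonical Poisson space. Consequently each $\widehat{A}_\omega(k,\mathbf{j},\mathbf{n})$ is $\Fc$-measurable in $\omega$.

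Writing
\[
\Amin_\omega(x,y) = \inf_{k \ge 1}\ \inf_{\mathbf{n} \in \N^k}\ \inf_{\mathbf{j}} \widehat{A}_\omega(k,\mathbf{j},\mathbf{n})
\]
then presents $\Amin_\omega(x,y)$ as a countable infimum of measurable functions, hence measurable. The case $x = y$ is handled separately by the convention $\Amin_\omega(x,x) = 0$. The main technical care is in verifying the joint measurability of $(\omega,z) \mapsto \omega(\{z\})$ (needed to evaluate $F_\omega$ at a random point) and in checking that every element of $\NicePaths^r_{x,y,*}(\omega)$ is realized by some tuple $(k,\mathbf{j},\mathbf{n})$, so that the countable infimum genuinely agrees with the original; both are routine once the parametrization is in place.
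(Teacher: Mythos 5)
Your proof is correct and takes essentially the same route as the paper: both reduce, via \eqref{eq:def_random_metric3} (part~\ref{it:best-action-via-Q} of Lemma~\ref{lem:nice-paths}), to the infimum over $\NicePaths^r_{x,y,*}(\omega)$ and then exhibit it as a countable infimum of measurable functions of $\omega$. The only difference is implementation: the paper truncates to balls $\Ball(0,m)$, minimizes over the nice paths determined by the finitely many Poisson points there, and takes the monotone limit in $m$, whereas you parametrize all nice paths at once through a measurable enumeration of the Poisson points together with substep counts, which in fact makes the countability of the minimization over the (unbounded) substep counts a bit more explicit than the paper's ``finitely many paths'' phrasing.
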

\begin{proof} Due to~\eqref{eq:def_random_metric3}, we can write
\[
    \Amin_\omega(x,y)=\lim_{m\to\infty} \Amin_\omega(x,y,m),
\]
where
\[
    \Amin_\omega(x,y,m)=\min_{\substack{\gamma\in\NicePaths^r_{x,y,*}(\omega)\\ \gamma\subset\Ball(0,m)}} A_\omega(\gamma).
    \]
Since to compute $\Amin_\omega(x,y,m)$ we just need to  search through finitely many paths defined by Poisson points they pass through,
$\Amin_\omega(x,y,m)$ is a random variable for a fixed $m$.  Therefore, the limit as $m\to\infty$, $\Amin_\omega(x,y)$, 
is also a random variable. 
\end{proof}

Using part~\ref{it:even-steps} of Lemma~\ref{lem:nice-paths}, we can define distances
between any two points along a straight line:
\begin{align}
    \label{eq:distance_along_straight}
    \strdist_\omega(x,y)&=\inf_{n\in\N} A_\omega(\gamma(x,y,n))
    \\ \notag
    &= \inf_{n\in\N} \Big(n L\Big(\frac{x-y}{n}\Big)+\sum_{i=1}^{n-1} F_\omega(\gamma_i(x,y,n))\Big)+\frac{1}{2}(F_\omega(x)+F_\omega(y)).
\end{align}
We also define $\strdist_\omega(x,x)=0$ for all $x\in\R^d$.
Let $\bar \Omega$ be the set of all $\omega$ such that no three points of $\omega$ are on the same straight line.
Then $\Prb(\bar \Omega)=1$.

\begin{lemma}\label{lem:factsAboutRho} 
    Let $\omega\in\bar\Omega$. Then 
    %\begin{enumerate}[label=\rm(\alph*), ref = \rm{\alph*}] 
        %\item\label{eq:uniformPositivity} 
        for all distinct $x,y\in\R^d$, 
        $\strdist_\omega(x,y)<\infty$, and  for all compact sets $K\subset \R^d$, 
        \[\inf_{x\in K,\,y\in \R^d,y\ne x}\rho_\omega(x,y) > 0.\]
        %\item  \label{eq:r_grows} There are $C,R>0$ such that
        %$|x-y|>R$ implies
        %$\strdist_\omega(x,y)\ge C|x-y|$.   
    %\end{enumerate}
\end{lemma}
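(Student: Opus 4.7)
The finiteness claim is immediate: from the definition \eqref{eq:distance_along_straight} with $n=1$,
\[
\strdist_\omega(x,y)\le L(y-x)+\tfrac{1}{2}(F_\omega(x)+F_\omega(y))\le L(y-x)+1<\infty,
\]
since $L\in C^2(\R^d)$ by \ref{eq:L-C2}. The substance is in the positivity claim, for which I fix a compact $K\subset\R^d$ and $\omega\in\bar\Omega$, and split on whether $x\in\omega$.

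The case $x\notin\omega$ is easy: $F_\omega(x)=1$, so for \emph{every} evenly spaced path $\gamma(x,y,n)$, the term $\tfrac12 F_\omega(\gamma_0)=\tfrac12$ appears in the F-sum while all other F-contributions and all $L$-contributions are nonnegative. Hence $A_\omega(\gamma(x,y,n))\ge \tfrac12$ for every $n$, and $\strdist_\omega(x,y)\ge\tfrac12$ uniformly in $y\neq x$. The case $x\in\omega$ requires the $\bar\Omega$ hypothesis. The $n+1$ evenly spaced, pairwise distinct points $\gamma_0(x,y,n),\dots,\gamma_n(x,y,n)$ are collinear, so at most two of them lie in $\omega$; each zero of $F_\omega$ among them reduces the weighted F-sum by at most $1$, giving
\[
\tfrac{1}{2}F_\omega(\gamma_0)+\sum_{i=1}^{n-1}F_\omega(\gamma_i)+\tfrac{1}{2}F_\omega(\gamma_n)\ge n-2.
\]
Consequently, for $n\ge 3$, $A_\omega(\gamma(x,y,n))\ge n-2\ge 1$. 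For $n=2$: if $y\notin\omega$, the term $\tfrac12 F_\omega(y)=\tfrac12$ alone gives $A\ge\tfrac12$; if $y\in\omega$ then the midpoint $(x+y)/2$ cannot be in $\omega$ (else three collinear points of $\omega$), so $F_\omega((x+y)/2)=1$ and $A\ge 1$.

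It remains to control $n=1$ when $x\in\omega$. If $y\notin\omega$, then $A_\omega(\gamma(x,y,1))\ge\tfrac12$. If $y\in\omega$, let $\Delta_\omega(x)>0$ denote the Euclidean distance from $x$ to the nearest other point of $\omega$; then $|y-x|\ge\Delta_\omega(x)$. By \ref{asm:lower-L-at-0}, $L(z)\ge c|z|^2$ for $|z|\le 1$, and by \ref{asm:L-convex} together with continuity and convexity of $L$, $L(z)\ge\alpha:=\min_{|z|=1}L(z)>0$ for $|z|\ge 1$. Hence $L(y-x)\ge c\min(\Delta_\omega(x)^2,1)\wedge\alpha$, yielding a strictly positive lower bound that depends on $\Delta_\omega(x)$. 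Combining all of the above,
\[
\strdist_\omega(x,y)\ge \min\!\Big(\tfrac12,\ c\min(\Delta_\omega(x)^2,1),\ \alpha\Big)\quad\text{whenever } x\in\omega.
\]
Finally, since $\omega$ is locally finite, $\omega\cap K$ is a finite set $\{x_1,\dots,x_N\}$, and each $\Delta_\omega(x_i)>0$. Taking the minimum of the above expression over this finite set and combining with the bound $\strdist_\omega(x,y)\ge\tfrac12$ valid on $K\setminus\omega$ gives a strictly positive uniform lower bound on $\inf_{x\in K, y\ne x}\strdist_\omega(x,y)$.

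The only place any genuine thought is required is the counting argument for the F-sum under $\bar\Omega$; everything else is routine case analysis combined with local finiteness of the Poisson configuration. No new ideas beyond the already-listed conditions \ref{asm:L-convex}--\ref{asm:lower-L-at-0} and the defining property of $\bar\Omega$ are needed.
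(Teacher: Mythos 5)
Your proof is correct, but it takes a genuinely different route from the paper's. You split according to whether $x$ is a Poisson point and then according to the number of steps $n$, using the defining property of $\bar\Omega$ (no three collinear Poisson points) to force a weighted penalty sum of at least $n-2$ for $n\ge 2$, and handling the remaining case $n=1$, $x,y\in\omega$ via the nearest-neighbor distance $\Delta_\omega(x)$ together with the quadratic lower bound \ref{asm:lower-L-at-0} and positivity of $L$ on the unit sphere; uniformity over $K$ then comes from finiteness of $\omega\cap K$. The paper instead fixes a single separation scale $\delta>0$, smaller than the distance from any Poisson point of $K\cap\omega$ to any other Poisson point, and splits on the step length $|y-x|/n$: if the step is smaller than $\delta$, then either $x$ or $\gamma_1(x,y,n)$ is a penalty point, giving $A\ge 1/2$, while if the step is at least $\delta$, the Lagrangian term alone gives $nL((y-x)/n)\ge c\delta^2$; the infimum is thus at least $\tfrac12\wedge c\delta^2$. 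A notable difference is that the paper's argument never uses the non-collinearity hypothesis at all -- local finiteness of the configuration suffices -- so it is slightly more economical and more general, whereas your argument genuinely exploits $\bar\Omega$ and in return produces an explicit lower bound in terms of the nearest-neighbor distances of the Poisson points in $K$. Both arguments are complete and both reduce the uniformity over $x\in K$ to the finiteness of $\omega\cap K$.
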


\begin{proof} 
The  upper bound
is trivial. 
Now we prove the lower bound. Let $\delta\in (0,1)$ be less than the minimum distance between any Poisson point $p_1\in K\cap \omega$ and any other Poisson point $p_2\in \omega\setminus\{p_1\}.$ Then, if $x\in K$, $y\ne x$ and $n\in \N$ satisfy $|y-x|/n<\delta$, then either $x$ or $\gamma_1(x,y,n)$ are not a Poisson point and so $A_\omega(\gamma(x,y,n))>\frac{1}{2}$. If, rather, $|y-x|/n\ge \delta$, then, due to \ref{asm:lower-L-at-0},
 $A_\omega(\gamma(x,y,n))\ge n L (|y-x|/n)\ge c\delta^2$. Thus, the infimum in question is bounded below by $1/2 \wedge (c\delta^2)$.
\end{proof}

 We recall that $\ast$-connected sets are defined 
in~\Cref{sec:proofs_of_lemmas}.

For a random field $(X_k)_{k\in\Z^d}$ and a set $U\in \Z^d$, we denote
\[
    X(U)=\sum_{k\in U}X_k.
\]

\begin{lemma}[\cite{LaGatta_Wehr2010}]\label{lem:LaGatta-Wehr} Let $(X_k)_{k\in\Z^d}$ be a stationary random field with finite dependence range. Suppose
     that
    \begin{equation}
        \label{eq:no_atom_at_0}
     \Prb\{X_0=0\}=0.
    \end{equation} 
     Then, there is $\beta>0$ such that 
    for all $A>0$, the following holds. With probability~$1$, 
    there is $N$ such that for $n\ge N$, 
    if $\Gamma$ is a $\ast$-connected subset of $\Z$ containing $0\in\Z^d$
    and $X(\Gamma)\le An$, then $|\Gamma|\le \beta An$.    
\end{lemma}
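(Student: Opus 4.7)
The plan is to reduce the claim to a deterministic linear lower bound $X(\Gamma) \ge c|\Gamma|$ valid almost surely for all sufficiently large $\ast$-connected $\Gamma \ni 0$, and then prove this bound by a Chernoff/union-bound argument that exploits the hypothesis $\Prb\{X_0 = 0\} = 0$. Granted such a $c > 0$ and an almost surely finite random threshold $m_0(\omega)$, the lemma follows with $\beta = 1/c$: given $A > 0$, choose $N = \lceil m_0/(\beta A)\rceil$; then for $n \ge N$ and $\ast$-connected $\Gamma \ni 0$ with $X(\Gamma) \le An$, either $|\Gamma| \ge m_0$, yielding $|\Gamma| \le X(\Gamma)/c \le \beta A n$, or $|\Gamma| < m_0 \le \beta A n$.

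To produce the lower bound, fix $\epsilon > 0$ so that $p := \Prb\{X_0 > \epsilon\}$ is close to $1$ (possible since $\Prb\{X_0 = 0\} = 0$), and set $Z_k = \1\{X_k > \epsilon\}$, so that $X_k \ge \epsilon Z_k$. With $R$ the dependence range and $Q = 2R+1$, the restriction of $(Z_k)$ to each of the $Q^d$ cosets of $Q\Z^d$ is an i.i.d.\ Bernoulli$(p)$ family. Let $K = K(d) < \infty$ satisfy the standard combinatorial bound $|\mathcal{C}(m)| \le K^m$ for site animals of size $m$ on the $(3^d-1)$-regular $\ast$-neighbor graph. For a fixed $\Gamma \in \mathcal{C}(m)$ of cardinality $m$, the event $\sum_{k\in\Gamma}(1-Z_k) > m/2$ forces, by pigeonhole, some coset $E_j$ to carry $\sum_{k\in\Gamma\cap E_j}(1-Z_k) > m/(2Q^d)$; on that coset the summands are i.i.d.\ Bernoulli$(1-p)$, and the elementary binomial upper-tail bound $\Prb(\mathrm{Bin}(n,1-p)\ge s) \le (en(1-p)/s)^s$ (valid for $s \ge n(1-p)$, which holds once $p > 1 - 1/(2Q^d)$) gives
\[
\Prb\Bigl(\sum_{k\in\Gamma}(1-Z_k) > m/2\Bigr) \le Q^d\bigl(2eQ^d(1-p)\bigr)^{m/(2Q^d)}.
\]
A union bound over $\Gamma \in \mathcal{C}(m)$ yields
\[
\Prb\bigl(\exists\,\Gamma\in\mathcal{C}(m),\ |\Gamma|=m,\ X(\Gamma) < \epsilon m/2\bigr) \le K^m Q^d\bigl(2eQ^d(1-p)\bigr)^{m/(2Q^d)},
\]
which is summable in $m$ once $\epsilon$ is chosen small enough that $2eQ^d(1-p) < K^{-2Q^d}$. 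Borel--Cantelli then delivers the threshold $m_0$, and the desired reduction holds with $c = \epsilon/2$.

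The main obstacle is that finite-range dependence precludes a direct Chernoff bound on $\sum_{k\in\Gamma}Z_k$ as a whole, while the exponential animal count $K^m$ defeats any estimate on a single coset that only decays at a fixed exponential rate. Both difficulties are circumvented precisely by driving $p$ close to $1$, which is available because $\Prb\{X_0 = 0\} = 0$: the binomial upper tail on the \emph{rare} event $\{Z_k = 0\}$ on one coset then decays super-exponentially in $m$ as $1-p \to 0$, overwhelming the combinatorial factor $K^m$.
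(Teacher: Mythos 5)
Your proposal is correct, but note that for this statement the paper offers no internal proof to compare against: Lemma~\ref{lem:LaGatta-Wehr} is imported from \cite{LaGatta_Wehr2010}, with the remark explaining how the hypotheses were strengthened (no atom at $0$ instead of a small atom, and $B=\beta A$ with $\beta$ depending only on the law of $X_0$). Your argument is a valid self-contained proof of exactly this strengthened form, and it is of the same Peierls/counting flavor as the original: the reduction to the deterministic bound $X(\Gamma)\ge c|\Gamma|$ for all large $\ast$-connected animals, followed by the coset decomposition to handle finite-range dependence, the animal count $|\mathcal{C}(m)|\le K^m$, the pigeonhole onto one coset, the binomial upper tail, and Borel--Cantelli, all fit together; the final bookkeeping ($\beta=2/\epsilon$, $N=\lceil m_0/(\beta A)\rceil$) is also right, and correctly makes $\beta$ deterministic and independent of $A$ while only $N$ is random. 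What your stronger hypothesis buys, compared with the original lemma where $\Prb\{X_0=0\}$ only needs to be below a percolation-type threshold, is precisely the freedom to drive $p=\Prb\{X_0>\epsilon\}$ arbitrarily close to $1$ and thus beat the $K^m$ entropy without any threshold considerations; this is the key point and you identified it. Two small matters you should make explicit: (i) you tacitly use $X_k\ge 0$ (both in $X_k\ge\epsilon Z_k$ and in $\Prb\{X_0>\epsilon\}\to1$ as $\epsilon\downarrow0$); nonnegativity is not written in the lemma but is needed (the claim is false for signed fields) and is satisfied by the field $(\xi_k)$ it is applied to, so state it as a standing assumption; (ii) mutual independence of $(Z_k)_{k\in\Gamma\cap E_j}$ within a coset should be justified by iterating the two-set finite-range property ($X_{x_1}\perp(X_{x_2},\dots,X_{x_n})$, then recurse), and $s=m/(2Q^d)$ should be replaced by its ceiling; both are routine and do not affect the estimate.
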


\begin{remark} Lemma 2.2 was stated slightly differently in \cite{LaGatta_Wehr2010}. We replace a condition 
    on the atom mass at $0$ by the stronger no atom at $0$
    condition in \eqref{eq:no_atom_at_0}. We also replace the condition on sets $\Gamma$ that  was allowed to vary with $n$ with a stricter requirement independent of~$n$. In \cite{LaGatta_Wehr2010}, only an estimate $|\Gamma|\le Bn$ is stated as the conclusion of the lemma but it follows from the proof that $B$ can be taken in the form of $\beta A$, where $\beta$ only depends on the distribution of $X_0$.
\end{remark}

\bigskip

We will use the notation $A+B=\{x+y: x\in A,\ y\in B\}$ for $A,B\subset\R^d$.
Let's fix $R>2r$ and for $k\in\Z$ define
\[
    \Icube_k=Rk+[0,R]^d,
\]   
\[
    \Icube_k^+=\Icube_k+[-r,r]^d,
\]
\[
    \Delta \Icube_k =  \Icube_k^+ \setminus \Icube_k,
 \]   
\[
    \Jcube_k=\Icube_k+[-R,R]^d,    
\]
\[   
\Jcube^-_k= \Icube_k+[-R+r,R-r]^d,
\]
\[
    \Delta \Jcube_k =  \Jcube_k \setminus \Jcube^{-}_k.
 \]

\begin{lemma} \label{lem:lower-bound-in-R-cube}
   There is a stationary $(0,\infty)$-valued finite dependence range random field $(\xi_k)_{k\in \Z^d}$  such that
    if $k\in\Z^d,$ $n\in\N$, and $\gamma\in \NicePaths^r_{*,*,n}(\omega)$ is contained entirely in $\Jcube_k$ and satisfies $\gamma_0\in\Delta\Icube_k$, $\gamma_n\in \Delta\Jcube_k$,
    then 
    \begin{equation}
        \label{eq:lower-bound-in-1-cube}
        A_\omega(\gamma)\ge \xi_k(\omega),\quad  \as
    \end{equation}
\end{lemma}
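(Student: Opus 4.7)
The plan is to take $\xi_k(\omega)$ to be, essentially, the smallest possible $\strdist$-distance between two distinct points of $\overline{\Jcube_k}$:
\begin{equation*}
\xi_k(\omega) := \inf\bigl\{\strdist_\omega(x,y) :\ x,y\in \overline{\Jcube_k},\ x\ne y\bigr\}.
\end{equation*}
The three properties to check are positivity, stationarity with finite range, and the lower bound on $A_\omega(\gamma)$. For positivity, I will apply \Cref{lem:factsAboutRho} to the compact set $\overline{\Jcube_k}$: on the full-measure event $\bar\Omega$ one has $\inf_{x\in \overline{\Jcube_k},\,y\ne x}\strdist_\omega(x,y)>0$, and restricting $y$ to $\overline{\Jcube_k}$ only enlarges the infimum. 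For stationarity and finite-range dependence, the key observation is that $\strdist_\omega(x,y)$ only involves $F_\omega$ evaluated on the line segment $[x,y]$, so for $x,y\in \overline{\Jcube_k}$ it depends only on the restriction of~$\omega$ to the convex set $\overline{\Jcube_k}$. Translating by $Rk$ gives $\xi_k(\omega)=\xi_0(\theta_*^{-Rk}\omega)$, hence stationarity; and since $\overline{\Jcube_k}\cap \overline{\Jcube_{k'}}=\emptyset$ whenever $|k-k'|_\infty\ge 3$, the Poisson independence on disjoint sets provides finite-range dependence.

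The core of the argument is the inequality $A_\omega(\gamma)\ge \xi_k(\omega)$. Given an admissible $\gamma\in \NicePaths^r_{*,*,n}(\omega)$ with the stated endpoints, let $0=i_0<i_1<\cdots<i_K=n$ be the binding indices and $p_j:=\gamma_{i_j}\in \overline{\Jcube_k}$. The hypothesis $R>2r$ forces $\Icube_k^+\subset \Jcube_k^-$, hence $\Delta\Icube_k\subset \Jcube_k^-$ is disjoint from $\Delta\Jcube_k=\Jcube_k\setminus \Jcube_k^-$. Therefore $p_0\in \Delta\Icube_k$ and $p_K\in \Delta\Jcube_k$ are distinct, so $K\ge 1$. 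By the defining property of $\NicePaths^r$, the subpath between $p_j$ and $p_{j+1}$ is exactly $\gamma(p_j,p_{j+1},i_{j+1}-i_j)$. The $\tfrac12$-weighting of endpoint $F$-values in \eqref{eq:Poisson-action} makes the action exactly additive under splitting at binding vertices, and taking the infimum over the number of steps in each block gives
\begin{equation*}
A_\omega(\gamma)=\sum_{j=0}^{K-1}A_\omega\bigl(\gamma(p_j,p_{j+1},i_{j+1}-i_j)\bigr)\ge \sum_{j=0}^{K-1}\strdist_\omega(p_j,p_{j+1})\ge K\,\xi_k(\omega)\ge \xi_k(\omega).
\end{equation*}

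There is no real obstacle here: the work is bookkeeping. The delicate points are (i) verifying that $\Icube_k^+\subset \Jcube_k^-$ so that the two shells are genuinely separated (hence $K\ge 1$), and (ii) confirming the exact additivity of $A_\omega$ under splitting at binding vertices, which relies crucially on the $\tfrac12$-weighting at path endpoints built into \eqref{eq:Poisson-action}. Once these are in place, the positivity, stationarity and finite-range dependence of $\xi_k$ fall out immediately from the structure of the Poisson process together with \Cref{lem:factsAboutRho}.
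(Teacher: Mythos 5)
Your proof is correct, and it reaches the conclusion by a slightly different route than the paper. The paper defines $\xi_k$ as the infimum of $\strdist_\omega(x,y)$ over the restricted family $x\in\Delta\Icube_k$, $y\in P_k\cup\Delta\Jcube_k$ (with $P_k$ the Poissonian points of $\Jcube_k$), and then lower-bounds $A_\omega(\gamma)$ using only an initial segment of the path: it stops $\gamma$ at the first vertex lying in $P_k\cup\Delta\Jcube_k$, observes that the preceding vertices are not Poissonian, and applies part~\ref{it:even-steps} of \Cref{lem:nice-paths} to compare that segment with the evenly spaced path, i.e.\ with $\strdist_\omega(\gamma_0,\gamma_{i^*})\ge\xi_k$. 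You instead take the smaller quantity $\xi_k=\inf\{\strdist_\omega(x,y):x,y\in\overline{\Jcube_k},\,x\ne y\}$ and bound the whole path, decomposing it at its binding vertices, using the exact additivity of \eqref{eq:Poisson-action} under concatenation (as in \eqref{eq:concatenation}) and property (iv) in the definition of $\NicePaths^r$ to identify each block with an evenly spaced path, so each block contributes at least $\strdist_\omega(p_j,p_{j+1})\ge\xi_k$. Both arguments rest on the same two pillars: positivity via \Cref{lem:factsAboutRho} on a compact set (which indeed covers your larger class of pairs, since the infimum there is over all $y\ne x$), and locality of $\strdist_\omega$ along segments inside the convex set $\Jcube_k$, which gives stationarity and finite dependence range. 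Your version trades the first-hitting truncation plus the even-steps comparison for additivity at binding vertices; the paper's version yields a larger (hence slightly stronger) $\xi_k$ and sidesteps the decomposition, but either suffices for the application in \Cref{lem:action_at_least_linear}. Two trivial bookkeeping points: the closures $\overline{\Jcube_k}$ and $\overline{\Jcube_{k'}}$ actually share a boundary face when $|k-k'|_\infty=3$ (strict disjointness needs $\ge 4$, or one notes the shared face is Lebesgue-null so independence of the Poisson restrictions holds regardless), which does not affect the finite-range conclusion; and $K\ge 1$ already follows from $n\ge 1$ and condition (iii) of $\NicePaths^r$, so the shell-disjointness argument for $p_0\ne p_K$, while correct, is not needed at that step.
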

\begin{proof}
    For $k\in \Z^d$ let $P_k$ denote the set of Poisson points in $\Jcube_k$ and define the random variable
\[\xi_k(\omega) = \inf\{\rho_\omega(x,y)\,:\,x\in \Delta\Icube_k,\  y\in P_k\cup \Delta \Jcube_k,\ y\neq x\}.\]
First, note that the collection $(\xi_k)_{k\in \Z^d}$ is stationary by stationarity of the Poisson process. Since each $\xi_k$ is a function of Poisson points contained in $\Jcube_k$, a bounded set,  the collection $(\xi_k)_{k\in \Z^d}$ has finite range of dependence. Additionally, %part~\eqref{eq:uniformPositivity} of 
\Cref{lem:factsAboutRho} implies that $\xi_k > 0$ almost surely.

We claim that $A_\omega(\gamma) \ge \xi_k(\omega)$ for all $\gamma\in \NicePaths^r_{*,*,n}$ satisfying the conditions in the lemma.  Note that $\Delta \Icube_k \cap \Delta \Jcube_k = \emptyset.$ As a consequence, there exists an $i^*\in \{1,\dots, n\}$ such that $\gamma_{i^*}\in P_k\cup \Delta \Jcube$ and $\gamma_j\notin P_k\cup \Delta \Jcube_k$ for all $j\in \{0,\dots, i^*-1\}.$ Then, $A_\omega(\gamma) \ge A_\omega(\gamma_0,\dots,\gamma_{i^*}).$ Finally, part~\ref{it:even-steps} of \Cref{lem:nice-paths} implies that
\[A_\omega(\gamma_0,\dots,\gamma_{i^*}) \ge \rho_\omega(\gamma_0,\gamma_{i^*}),\]
and the right-hand side is bounded below by $\xi_k(\omega).$
\end{proof}

\begin{lemma}\label{lem:action_at_least_linear}
    There is $C>0$ such that with probability 1, there is $D>0$ such that if $x\in I_0$,  $|y|>D$ and $\gamma\in\NicePaths^r_{x,y,*}(\omega)$, then 
    $A_\omega(\gamma)>C|y|$. 
\end{lemma}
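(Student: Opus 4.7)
The plan is to reduce the desired linear lower bound to the greedy lattice animal estimate of Lemma~\ref{lem:LaGatta-Wehr} applied to the random field $(\xi_k)_{k\in\Z^d}$ produced in Lemma~\ref{lem:lower-bound-in-R-cube}. To each $\gamma \in \NicePaths^r_{x,y,*}(\omega)$ with $x \in I_0$, I associate the visited index set $K(\gamma) := \{k \in \Z^d : \gamma \cap \Icube_k \neq \emptyset\}$. Because $\gamma$ is continuous and $\{\Icube_k\}_{k\in\Z^d}$ tiles $\R^d$, the set $K(\gamma)$ is $\ast$-connected in $\Z^d$ and contains $0$. Since $y \in \Icube_{k_y}$ for some $k_y$ with $|k_y|_\infty \ge |y|/R - O(1)$ and $K(\gamma)$ must contain a $\ast$-connected path from $0$ to $k_y$, one gets $|K(\gamma)| \ge |y|/R - C_0$ for a deterministic constant $C_0$.

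Next, set $K^\#(\gamma) := K(\gamma) \setminus \bigl(\{0\} \cup \{k : y \in \Jcube_k\}\bigr)$; the removed set has at most $1 + 4^d$ elements. For each $k \in K^\#(\gamma)$ the path enters $\Icube_k$ from outside and later exits $\Jcube_k$, so I form $\sigma_k$ as the restriction of $\gamma$ between the vertex immediately before its first entry into $\Icube_k$ (which lies in $\Delta \Icube_k$ by the step bound $|\Delta_i\gamma| \le r$) and the vertex immediately before its first subsequent exit from $\Jcube_k$ (which lies in $\Delta \Jcube_k$ for the same reason, provided $R > 2r$). After promoting these two new endpoints to binding vertices, $\sigma_k \in \NicePaths^r_{*,*,*}(\omega)$ is contained in $\Jcube_k$ with endpoints in $\Delta \Icube_k$ and $\Delta \Jcube_k$, so Lemma~\ref{lem:lower-bound-in-R-cube} gives $A_\omega(\sigma_k) \ge \xi_k$. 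Every point of $\R^d$ lies in at most $M := 4^d$ of the cubes $\Jcube_k$, hence each vertex and edge of $\gamma$ appears in at most $M$ of the subpaths, which yields
\[
A_\omega(\gamma) \;\ge\; \frac{1}{M}\sum_{k \in K^\#(\gamma)} A_\omega(\sigma_k) \;\ge\; \frac{1}{M}\sum_{k \in K^\#(\gamma)} \xi_k.
\]

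To close the argument I need two complementary bounds on $\xi$. On one side, $\xi_k \le C_\xi$ for a deterministic constant $C_\xi$, uniformly in $k$ and $\omega$: fixing $x_0 \in \Delta \Icube_0$ and $y_0 \in \Delta \Jcube_0$ and translating by $Rk$, the single-step path combined with $F \le 1$ gives $\xi_k \le \strdist_\omega(x_0 + Rk, y_0 + Rk) \le L(y_0 - x_0) + 1$; consequently $\sum_{k \in K(\gamma) \setminus K^\#(\gamma)} \xi_k \le (1 + 4^d) C_\xi$. On the other side, $(\xi_k)_{k \in \Z^d}$ is stationary under lattice shifts (by stationarity of the Poisson process), has finite range of dependence (since $\xi_k$ depends only on the Poisson configuration inside the bounded region $\Jcube_k$), and satisfies $\Prb\{\xi_0 = 0\} = 0$ by Lemma~\ref{lem:factsAboutRho}. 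Lemma~\ref{lem:LaGatta-Wehr} applied with a sufficiently small parameter $A$ then yields a deterministic $c > 0$ and an almost surely finite $N_0 = N_0(\omega)$ such that $\sum_{k\in\Gamma}\xi_k \ge c|\Gamma|$ for every $\ast$-connected $\Gamma \ni 0$ with $|\Gamma| \ge N_0$. Choosing $D(\omega)$ so that $|y| > D(\omega)$ forces both $|K(\gamma)| \ge N_0(\omega)$ and $c|K(\gamma)| \ge 2(1 + 4^d)C_\xi$ uniformly in admissible $\gamma$, we obtain
\[
A_\omega(\gamma) \;\ge\; \frac{1}{M}\Bigl(c|K(\gamma)| - (1 + 4^d)C_\xi\Bigr) \;\ge\; \frac{c\,|K(\gamma)|}{2M} \;\ge\; \frac{c}{4RM}|y| \;=:\; C|y|,
\]
with $C$ deterministic.

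The main obstacle is the second step: ensuring that the restriction $\sigma_k$ genuinely inherits the $\NicePaths^r$ structure so that Lemma~\ref{lem:lower-bound-in-R-cube} truly applies (one has to promote the new endpoints to binding vertices and verify that the even-spacing and step-size conditions survive on the boundary segments of $\sigma_k$), and then carrying out the overlap count precisely enough to get the factor $M$ rather than something larger. Everything else is routine bookkeeping.
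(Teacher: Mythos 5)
Your argument is correct in substance and rests on the same two pillars as the paper's proof --- the per-cube cost field $(\xi_k)$ of Lemma~\ref{lem:lower-bound-in-R-cube} and the lattice animal bound of Lemma~\ref{lem:LaGatta-Wehr} --- but it is organized differently. The paper argues by contradiction and decomposes the path \emph{sequentially}: starting from $k_0=0$ it defines stopping indices $i_0<i_1<\dots$ at which the path leaves $\Jcube_{k_j}$, producing consecutive, index-disjoint pieces, each satisfying the hypotheses of Lemma~\ref{lem:lower-bound-in-R-cube}; the resulting cube chain $\Gamma=\{k_0,\dots,k_{N-1}\}$ is automatically $\ast$-connected, no overlap factor is needed, and Lemma~\ref{lem:LaGatta-Wehr} is used exactly as stated with $A=\eps$ small, forcing $|\Gamma|\le\beta\eps\lceil|y|\rceil$, which contradicts $|\Gamma|\gtrsim |y|/R$. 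You instead argue directly, sum $\xi_k$ over essentially all visited cubes, pay the overlap factor $M=4^d$, and need the ``reversed'' form of Lemma~\ref{lem:LaGatta-Wehr} ($\xi(\Gamma)\ge c|\Gamma|$ for all large $\ast$-connected $\Gamma\ni 0$), which does follow from the stated form by a short contrapositive argument. Your route buys a direct proof with an explicit constant; the paper's chain decomposition buys freedom from overlap bookkeeping and from any upper bound on $\xi_k$. Three small repairs are needed in your write-up, none structural: (i) since $x$ may lie on a face shared by several cubes, you must exclude from $K^\#(\gamma)$ not only $k=0$ but all (boundedly many) $k$ with $x\in\Icube_k$, otherwise ``the vertex before the first entry into $\Icube_k$'' need not exist; (ii) the uniform bound $\xi_k\le C_\xi$ uses the explicit formula for $\xi_k$ inside the proof of Lemma~\ref{lem:lower-bound-in-R-cube} rather than its statement --- the cleanest fix is to replace $\xi_k$ by $\xi_k\wedge 1$, which preserves stationarity, finite range, a.s.\ positivity and the lower bound $A_\omega(\sigma_k)\ge\xi_k\wedge 1$; (iii) the inequality $A_\omega(\gamma)\ge M^{-1}\sum_k A_\omega(\sigma_k)$ and the bound $|K(\gamma)|\ge |y|/R-C_0$ only hold up to harmless constant corrections (the $\tfrac12 F$ weights at subpath endpoints and a factor $\sqrt d$ from comparing $|\cdot|$ with $|\cdot|_\infty$). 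The point you flag as the main obstacle --- that a contiguous restriction of a path in $\NicePaths^r_{x,y,*}(\omega)$ is again admissible for Lemma~\ref{lem:lower-bound-in-R-cube} --- is genuine but routine, and the paper's own pieces $\gamma^{m,j}$ rely on exactly the same fact.
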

Once the existence of the shape function $\Lambda$ is established, 
it follows from this lemma that $\Lambda(v)>0$ for all $v\ne 0$, which, 
according to Theorem~\ref{thm:limit-shape-diff},
implies that the boundary of the limit shape is diffeomorphic to a sphere.  

\begin{proof} %[Proof of Lemma~\ref{lem:action_at_least_linear}]
Assume that no $C$ described in the statement exists. It means that with positive probability,
for every $\eps>0$, there 
are sequences $n_m\in\N$, $x_m\in I_0$, $y_m\in\R^d$,  $\gamma^m\in \NicePaths^r_{x_m,y_m,n_m}$ 
with $A_\omega(\gamma^m)<\eps |y_m|$ and 
$|y_m|\to \infty$. 
Recalling that $\beta$ is the constant provided by Lemma~\ref{lem:LaGatta-Wehr} and choosing $\eps$ to satisfy
\begin{equation}
    \label{eq:choosing_eps}
    0<\eps< (2\sqrt{d}R\beta)^{-1},
\end{equation}
we will arrive at a contradiction. 

We are going to decompose $\gamma^m$ into smaller pieces. We will use the fact that the increments of $\gamma^m$ are bounded by $r$.
First, we set $k_0=0\in\Z^d$ and 
\[i_0=\min\{s\in\N:\ \gamma_{s}\in \Delta\Icube_0  \}.
\] 
Then, inductively, for $j=0,1,2,\ldots$, we define
\begin{align*}
    i_{j+1}&=\min\{s> i_j:\ \gamma_{s}\notin \Jcube_{k_{j}}\}\wedge n_m,
\end{align*}
and choose $k_{j+1}\in\Z^d$ so that $k_{j+1}-k_j\in \{-1,0,1\}^d$ and
$\gamma_{i_{j+1}}\in \Icube^+_{k_{j+1}}$. The latter can always be accomplished since the distance between two consecutive vertices of $\gamma$ is bounded by $r$. The same argument implies $\gamma_{i_{j+1}-1}\in \Delta\Jcube_{k_{j}}$. We 
define 
$N_m=\min \{j:\ i_j=n_m\}$ and 
\[
    \gamma^{m,j}=(\gamma^m_{i_j},\gamma^m_{i_j+1},\ldots, \gamma^m_{i_{j+1}-1}), \quad j=0,\ldots,N_m-1.
\]
These paths satisfy the conditions of Lemma~\ref{lem:lower-bound-in-R-cube},  Hence, for $\Gamma_m=\{k_0,\ldots,k_{N_m-1}\}$, we can use the random field  $(\xi_k)_{k\in \Z^d}$ provided by
Lemma~\ref{lem:lower-bound-in-R-cube} to obtain
 \[
    \xi(\Gamma_m)=\sum_{k\in\Gamma_m} \xi_k\le \sum_{j=0}^{N_m-1} A_\omega(\gamma^{m,j})\le A_\omega(\gamma^m)\le \eps |y_m|\le 
    \eps \lceil |y_m|\rceil.
\] 

Since $\Gamma_m$ is $\ast$-connected, we can 
apply Lemma~\ref{lem:LaGatta-Wehr}. Choosing $A=\eps$
and using~\eqref{eq:choosing_eps}, we obtain 
for sufficiently large $m$,
\begin{equation}
    \label{eq:Gamma-size}
    |\Gamma_m|\le \beta \eps \lceil |y_m|\rceil < 
\frac{1}{2\sqrt{d}R} \lceil |y_m|\rceil.
\end{equation}
But $\Gamma_m$ a $\ast$-connected set containing both $0$ and 
$k_{N_m-1}$. 
Since  $|y_m-Rk_{N_m-1}|\le 2R$ and $|Rk_{N_m-1}|\le |\Gamma_m|\sqrt{d}R$, we obtain $|y_m|\le 2R+ |\Gamma_m|\sqrt{d}R$
contradicting \eqref{eq:Gamma-size} and completing the proof.
\end{proof}

Now we can complete the proof of Lemma~\ref{lem:existence-minimizers}
\begin{proof}
Due to Lemma~\ref{lem:action_at_least_linear}, for almost all $\omega\in\Omega$, the following 
holds for all $R>0$: there is $D_R=D_R(\omega)$ such that 
if $x,y\in \Ball(0,R)$ and  a path 
$\gamma\in \NicePaths^r_{x,y,*}(\omega)$ is not contained in $\Ball(0,D_R)$ then $A_\omega(\gamma)\ge \Amin_\omega(x,y)+1$. 
Thus, paths in $\NicePaths^r_{x,y,*}(\omega)$ with smaller actions 
are contained in $\Ball(0,D_R)$. Since there are finitely many paths in $\NicePaths^r_{x,y,*}(\omega)$ contained in that ball, at least one of them realizes $\Amin_\omega(x,y)$. 
If such path is unique, we set $\gamma_\omega(x,y)$ to be that path.
If there are at least two minimizing paths, we need a tie-breaking rule. For example, 
if there is a minimizer not passing through any Poissonian points, we let $\gamma_\omega(x,y)$ to be that minimizer (it is unique).
If all minimizers pass through some Poissonian points, we choose $\gamma_\omega(x,y)$ to be the one 
containing the Poissonian point with minimal Euclidean norm. On a set of  probability 1, this procedure results in a unique path. We define $\gamma_\omega(x,y)=(x,y)\in\Paths_{x,y,1}$ on the 
complement of this event.

To prove that thus defined geodesic $\gamma$ is measurable, we note that (i) $\gamma_\omega(x,y)$ is the a.s.-limit of action minimizers restricted to the ball $\Ball(0,D)$, as $D\to\infty$; (ii) these restricted minimizers are measurable since they are chosen among finitely many paths.
\end{proof}

\bibliographystyle{alpha} 
\bibliography{Burgers,polymer}

\end{document}